\newtheorem{theorem}{Theorem}[section]
\newtheorem{proposition}[theorem]{Proposition}
\newtheorem{lemma}[theorem]{Lemma}
\newtheorem{remark}{Remark}
\DeclareMathOperator{\cycl}{Cycl}
\newcommand{\includegraph}[2][]{\ifnum\pdfoutput=0\includegraphics[#1]{#2.eps}\else\includegraphics[#1]{#2.pdf}\fi}
\newcommand{\propref}[1]{Proposition~\ref{proposition:#1}}
\newcommand{\proplab}[1]{\label{proposition:#1}}
\author[1]{Renato Huzak}
\author[2]{Kristian Uldall Kristiansen}
\affil[1]{Hasselt University, Campus Diepenbeek, Agoralaan Gebouw D, 3590 Diepenbeek, Belgium}
\affil[2]{Department of Applied Mathematics and Computer Science, Technical University of Denmark, 2800 Kgs. Lyngby, Denmark}
\title{Sliding cycles of the regularized piecewise linear $VI_3$ two-fold}
\date{}
\begin{document}
\maketitle

\begin{abstract}
The goal of this paper is to study the number of sliding limit cycles of a regularized piecewise linear
$VI_3$ two-fold using the notion of slow divergence integral. We focus on limit cycles produced by canard cycles located in the half-plane with an invisible fold point. We prove that the integral has at most $1$ zero counting multiplicity (when it is not identically zero). This will imply that the canard cycles can produce at most $2$ limit cycles. Moreover, we detect regions in the parameter space with $2$ limit cycles.
\end{abstract}
\textit{Keywords:} limit cycles; piecewise linear systems; regularization function; slow divergence integral;  \newline

\tableofcontents

\section{Introduction}\label{Section-Introduction}
In this paper, we consider the regularization of piecewise smooth linear systems:
\begin{align}
 \dot z &= Z^+(z)\phi(h(z)\epsilon^{-2})+Z^-(z)(1- \phi(h(z)\epsilon^{-2})),\quad z\in \mathbb R^2, \label{Zreg}
\end{align}
with $Z^+$ and $Z^-$ planar affine vector-fields, and $h:\mathbb R^2\rightarrow \mathbb R$ linear and $\nabla h\ne 0$. Finally, $\phi:\mathbb R\rightarrow \mathbb R$ is a regularization function:
\begin{align}
 \phi'(s)>0\mbox{ for all }  s\in \mathbb R,
\quad 
 \phi(s) \rightarrow \begin{cases}
                      1 &\text{for}\,\,s\rightarrow \infty\\
                      0 &\text{for}\,\,s\rightarrow -\infty
                     \end{cases}\label{phi1st}
\end{align}
Regularized piecewise smooth systems have received a great deal of attention in the recent years, see \cite{bossolini2020a,jelbart2021c,jelbart2021b,kosiuk2016a,kristiansen2018a,kristiansen2020a,kristiansen2015a,RHKK} and references therein. In fact, piecewise smooth (PWS, henceforth) systems 
\begin{align}
 \dot z &=\begin{cases}
           Z^+(z)  \text{ for }h(z)>0,\\
           Z^-(z)  \text{ for }h(z)<0,
          \end{cases}\label{pws}
\end{align}
corresponding by (\ref{phi1st}) to the singular limit $\epsilon\rightarrow 0$ of (\ref{Zreg}), occur naturally in many different applications, including in models of friction (see \cite{berger2002a}). 
 If $Z^\pm$ and $h$ are each affine, then (\ref{pws}) is said to be piecewise linear (PWL). 


The interest in (\ref{Zreg}) for $0<\epsilon\ll 1$ is partially motivated from the desire to understand how piecewise smooth phenomena (folds, grazing, boundary equilibria \cite{Kuznetsov2003}) unfold in the smooth version \cite{jelbart2021b,jelbart2021c,kristiansen2018a,kristiansen2015a,bonet-reves2018a}. For this purpose methods from Geometric Singular Perturbation Theory (GSPT) and blowup have been refined to deal with resolving the special singular limit of  (\ref{Zreg}), see \cite{RHKK,kristiansen2018a,kristiansen2020a}.

In \cite{RHKK}, the present authors showed that the number of limit cycles of (\ref{Zreg}) for $\epsilon>0$ is unbounded when $Z^\pm$ are quadratic vector-fields. In particular, we showed that there exist quadratic vector-fields $Z^\pm(\cdot,\lambda)$, depending smoothly on a parameter $\lambda$, such that the following statement is true:  For any $k\in \mathbb N$ there exist a regularization function $\phi_k$ satisfying (\ref{phi1st}) and a continuous function $\lambda_k:[0,\epsilon_k[\rightarrow \mathbb R$, with $\epsilon_k>0$, such that (\ref{Zreg}) with $Z^\pm(\cdot,\lambda_k(\epsilon))$ and $\phi=\phi_k$ has at least $k+1$ limit cycles. The limit cycles were constructed through $k$ simple zeros of a slow divergence integral that was associated with the so-called $VI_3$ two-fold of PWS systems.  In \cite{RHKKGR2023}, the notion of slow divergence integrals in the context of  regularized PWS systems is developed further, but historically slow divergence integrals were developed by De Maesschalck, Dumortier and Roussarie, see \cite{1996,DM-entryexit,DR2007,DM,DDR-book-SF} and references therein, as a tool in slow-fast systems and canard theory to detect limit cycles. In particular, the roots of the slow divergence integral provide candidates for limit cycles. For example, this tool can be used to find good lower bounds on the number of limit cycles in slow-fast Li\'{e}nard equations, see \cite{DDMoreLC,SDICLE1,DPR,lvarez2020a}.
For PWS systems, the slow motion along slow manifolds of slow-fast systems is replaced by sliding (following Filippov \cite{filippov1988differential}) along subsets of the discontinuity set $\Sigma$ where $Z^\pm$ are in opposition relative to $\Sigma$, see Fig.  \ref{fig:2examples}. Closed orbits with sliding segments are called sliding cycles in PWS systems.

At the same time, there has in recent years been an attempt, for example by  J. Llibre and co-workers, to determine the maximum number of crossing limit cycles in PWL systems. Certain subcases have been solved \cite{esteban2021a,li2021a,llibre2013a} and more generally it has been shown in \cite{carmona2023a} that the number of crossing limit cycles is bounded by $8$. To the best of our knowledge, only $3$ crossing limit cycles have been realized (see \cite{Llibre3LC,HuanYang}). We also refer to \cite{Gasull2020,LlibreOrd,Freire,BragaMello,Han2010} and references therein.

 The purpose of this paper is to begin the analysis of sliding cycles of regularized PWL systems. In this paper, we focus on the $VI_3$ case, see Fig. \ref{fig:2examples}(a), and show (when the slow divergence integral is not identically zero) that the family of canard cycles $\cup_{x\in J}\Gamma_x$ (blue in Fig. \ref{fig:2examples}(a)), with $J\subset \mathbb R_+$ being a compact interval, can produce at most $2$ limit cycles of \eqref{Zreg}. This will follow from \cite{RHKK} and Theorem \ref{theorem-linearzeros} in Section \ref{section-mainresult}, which states that the slow divergence integral has at most $1$ zero counting multiplicity in $J$ (Remark \ref{remark-cycl} in Section \ref{section-mainresult}). The fold point from ``below'' in terms of $Z^-$ is invisible and the graphic $\Gamma_x$ consists of the orbit of $Z^-$ from $(x,0)$ to $(\Pi(x),0)$ and the segment $[\Pi(x),x]\subset \{y=0\}$, see Fig. \ref{fig:2examples}(a). $\Gamma_x$ is a sliding cycle of the PWS Filippov system, but following \cite{RHKK} we call it a canard cycle because it contains both stable and unstable sliding portions of the discontinuity line $y=0$ . We also prove the existence of $2$ sliding limit cycles for some regularized PWL $VI_3$ systems (Section \ref{section-mainresult} and Section \ref{section-proof}). 
 \smallskip
 
 In a separate paper \cite{RHKKProgress}, we consider the remaining cases, including the $VV_1$ case (Fig. \ref{fig:2examples}(b)). More precisely, sliding limit cycles can be produced by canard cycles detected in the half-plane with visible fold point (see red graphics in Fig. \ref{fig:2examples}). We expect the existence of $3$ sliding cycles in regularized PWL $VI_3$ and $VV_1$ systems.  
 
 \begin{figure}[htb]
	\begin{center}
		\includegraphics[width=9.8cm,height=3.7cm]{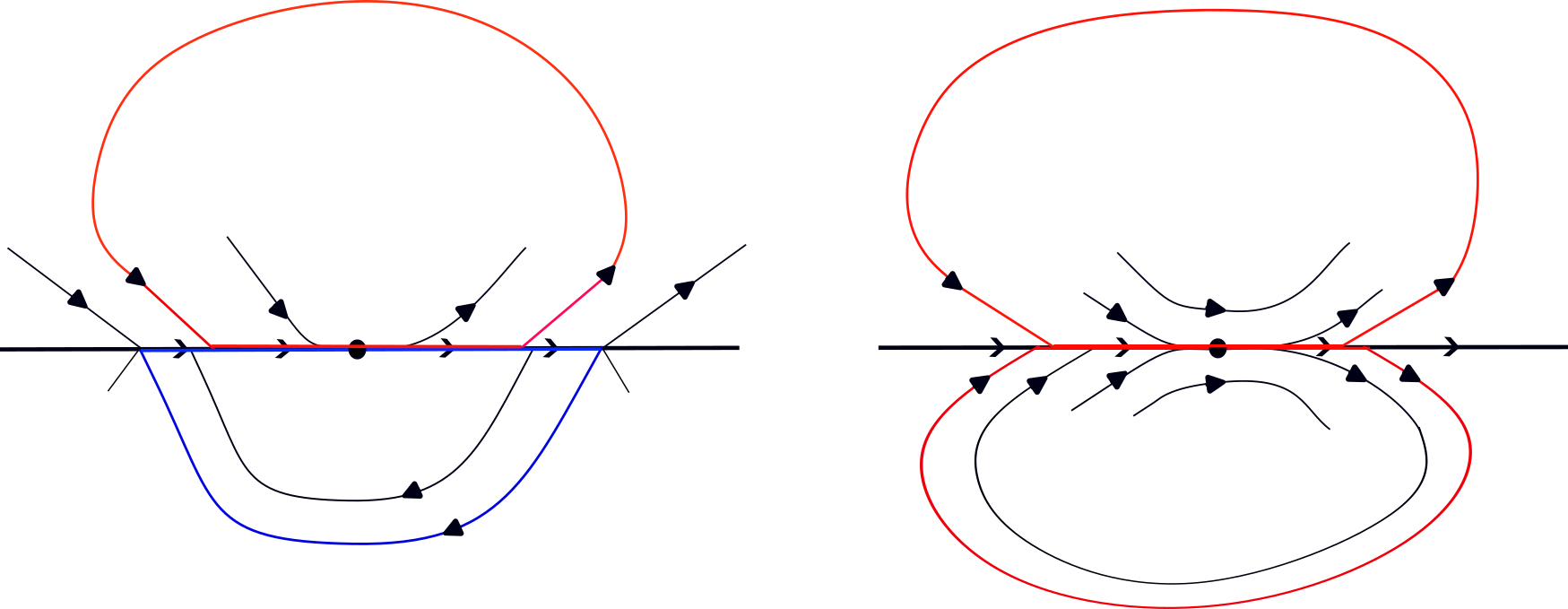}
		{\footnotesize
\put(-229,-10){(a) $VI_3$}
\put(-83,-10){(b) $VV_1$}
\put(-168,32){$x$}
\put(-269,30){$\Pi(x)$}
\put(-190,11){$\Gamma_x$}
\put(-285,70){$Z^+$}
\put(-285,14){$Z^-$}
\put(-132,70){$Z^+$}
\put(-132,14){$Z^-$}
  }
         \end{center}
	\caption{Canard cycles through two-fold singularities with sliding (the $VI_3$ case and the $VV_1$ case, see \cite{Kuznetsov2003}). In this paper we study the number of limit cycles near canard cycles $\Gamma_x$ (blue) located in the half-plane $y\le 0$ with invisible fold point, in the $VI_3$ case. $\Pi$ is the Poincar\'{e} half-map defined in Section \ref{subsection-halfmap}.  Canard cycles (red) can also appear in the half-plane with visible fold point. }
	\label{fig:2examples}
\end{figure}

 The paper is organized as follows:
 In Section \ref{section-applications}, we first review some basic concepts of Filippov PWS systems and present a normal form for the PWL $VI_3$-case, see Section \ref{section-pws} and Section \ref{section-twofold}. In Section \ref{section-regularized}, we define our regularized PWL $VI_3$ two-fold model and introduce the notion of slow divergence integral. Finally, in Section
 \ref{subsection-halfmap} we present some results on a Poincar\'e half-map based on the work of \cite{Carmona}. Subsequently, we then state our main result Theorem \ref{theorem-linearzeros} in Section \ref{section-mainresult}. The proof of  Theorem \ref{theorem-linearzeros}, available in Section \ref{section-proof}, uses the characterization of the Poincar\'e half-map presented in Section \ref{subsection-halfmap} and deals with the different cases (saddle, focus, proper and improper node, etc.) separately. 
 \section{Background and statement of the main result}
\label{section-applications}

\subsection{Filippov PWS systems}\label{section-pws}
In the following, we review the most basic concepts of PWS systems. For this purpose, we will follow \cite[Section 2]{RHKK}. Notice that henceforth we write $Z^\pm (h) := \nabla h \cdot Z^\pm$ for the Lie-derivative of $h$ in the direction $Z^\pm$.  

The discontinuity set $\Sigma=\{h(z)=0\}$ of \eqref{pws} is frequently called the switching manifold \cite{Bernardo08,GST2011} and it is divided into three disjoint sets $\Sigma=\Sigma^{cr}\cup \Sigma^{sl}\cup \Sigma^T$ characterized in the following way:
\begin{itemize}
 \item[(1)] The subset $\Sigma^{cr}\subset \Sigma$ consisting of all points $q\in \Sigma$ where
 \begin{align*}
  Z^+(h)(q)Z^-(h)(q)>0,
 \end{align*}
 is called ``crossing'', see Fig. \ref{fig:pws} (orange).
\item[(2)] The subset $\Sigma^{sl}\subset \Sigma$ consisting of all points $q\in \Sigma$ where
\begin{align*}
  Z^+(h)(q)Z^-(h)(q)<0
 \end{align*}
 is called ``sliding''. It is said to be stable (resp. unstable) if $Z^+(h)(q)<0$ and $Z^-(h)(q)>0$ (resp. $Z^+(h)(q)>0$ and $Z^-(h)(q)<0$). Fig. \ref{fig:pws} illustrates (in pink) stable sliding (unstable sliding can be obtained by reversing the arrows). 
 \item[(3)] The subset $\Sigma^T\subset \Sigma$ consisting of all points $q\in \Sigma$ where either $Z^+(h)(q)=0$ or $Z^-(h)(q)=0$ is called the set of tangency points. If $q\in \Sigma^T$ and $Z^+(h)(q)=0$ then $q$ is called a tangency point from above. Tangency points from below are defined similarly. Finally, $q$ is a double tangency point if it is tangency point from above and from below.
 \end{itemize}
 
  \begin{figure}[htb]
	\begin{center}
		\includegraphics[width=.5\textwidth]{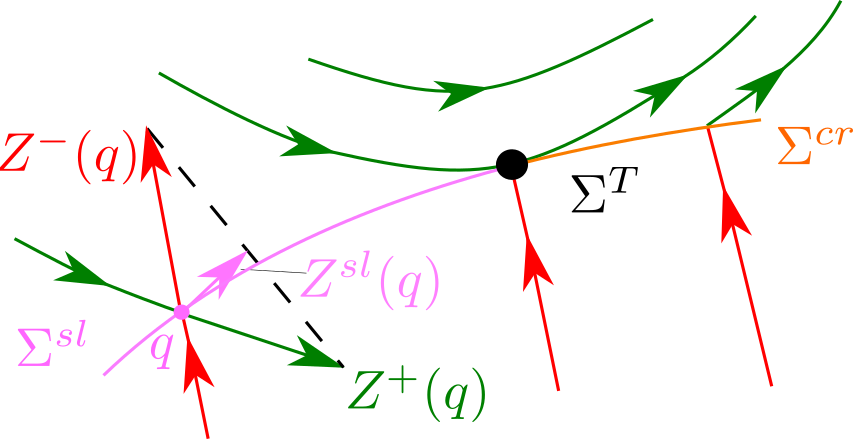}
		         \end{center}
	\caption{Illustration of a PWS visible fold $\Sigma^T$, that locally divides $\Sigma$ into a crossing set $\Sigma^{cr}$ (orange) and a sliding set $\Sigma^{sl}$ (pink, stable in the present case). On $\Sigma^{sl}$ we assign Filippov's sliding vector-field $Z^{sl}$, which is defined as the convex combination of $Z^\pm(q) $ that is tangent to $\Sigma$ at $q\in \Sigma$. }
	\label{fig:pws}
\end{figure}
 
 Along $\Sigma^{cr}$, trajectories can be extended from $Z^+$ to $Z^-$ or from $Z^-$ to $Z^+$ by concatenating orbits of $Z^+$ and $Z^-$. In contrast, trajectories of $Z^\pm$ reach $\Sigma^{sl}$ in finite time and to be able to continue trajectories a vector-field has to be defined along $\Sigma^{sl}$. The most common way to do this is by using the Filippov convention, where the \textit{sliding vector-field} $Z^{sl}$ is assigned on $\Sigma^{sl}$:
 \begin{align}
  Z^{sl}(q) = Z^+(q) p(q)+Z^-(q)(1-p(q)), \quad p(q): =\frac{-Z^-(h)}{Z^+(h)-Z^-(h)}(q),\label{Zsl}
 \end{align}
 for $q\in \Sigma^{sl}$,
see Fig. \ref{pws}. In this way, one can define a forward solution and a backward solution through any point, see \cite{filippov1988differential,Kuznetsov2003}. These solutions are (clearly) not unique in general, but this allows us to define $\omega$ and $\alpha$-limit sets. The choice \eqref{Zsl} is motivated by examples \cite{Bernardo08}, but importantly it also relates to the $\epsilon\rightarrow 0$ limit of \eqref{Zreg}, see \cite{RHKK} and Section \ref{section-regularized} below.

Frequently, we will suppose that $h(x,y)=y$, which is without loss of generality in the PWL case (and locally in the nonlinear case). In this case, $Z^{sl}=(X^{sl},0)$ which defines $X^{sl}$ (a one-dimensional vector-field on $\Sigma^{sl}\subset\{y=0\}$).

 We further classify the points in $\Sigma^T$ as follows (see also \cite{Bernardo08}):
 \begin{itemize}
  \item[(4)] A point $q\in \Sigma^T$ is a fold point from ``above'' if the orbit of $Z^+(\cdot)$ through $q$ has a quadratic tangency with $\Sigma$ at $q$, i.e.
  \begin{align*}
  \begin{cases}
  Z^+(q)&\ne 0,\\ 
  Z^+(h)(q)&=0, \\
  (Z^+)^2(h)(q)&\ne 0.\end{cases}
  \end{align*} We define a fold point from ``below'' in terms of $Z^-$ in a similar way. 
  \item[(5)] A fold point $q\in \Sigma^T$ from ``above'' is said to be visible, if the orbit of $Z^+(\cdot)$ through $q$ is contained within $y>0$ in neighborhood of $q$. It is said to be invisible otherwise. In terms of Lie-derivatives, we clearly have $(Z^+)^2(h)(q)>0$ iff $q$ satisfying $Z^+(q)\ne 0$ and $Z^+(h)(q)=0$ is visible. Fold points from below are classified in a similar way. In particular, $(Z^-)^2(h)(z)<0$ iff $q$ satisfying $Z^-(q)\ne 0$ and $Z^-(h)(q)=0$ is visible.
 \end{itemize}
 The fold point illustrated in Fig. \ref{fig:pws} (black dot) is visible from above.

%
\subsection{Two-folds and a normal form for the PWL $VI_3$-case}\label{section-twofold}
 Now, we finally arrive at the concept of two-folds in PWS systems. These are double tangency points that are fold points for both vector-fields and play the role of canard points in the analysis of (\ref{Zreg}), see \cite{RHKK}.
 \begin{itemize}
 \item[(6)] A two-fold $q\in \Sigma^T$ is a point with quadratic tangencies from above \textit{and} from below. In terms of Lie-derivatives we have: 
  \begin{align*}
  \begin{cases}
  Z^\pm (q)&\ne 0,\\ 
  Z^\pm (h)(q)&=0, \\
  (Z^\pm) ^2(h)(q)&\ne 0,\end{cases}
  \end{align*}
  with these equations understood to hold for \textit{both} $\pm$.
 \item[(7)] A two-fold is said to be visible-visible, visible-invisible, invisible-invisible according to the ``visibility'' of the fold from above and below, respectively, see item (5) above. 
\end{itemize}


The paper \cite{Kuznetsov2003} gave a characterization of two-folds. There are seven different cases, two cases of visible-visible (called $VV_{1,2}$), three cases of visible-invisible ($VI_{1,2,3}$), and finally two cases of invisible-invisible ($II_{1,2}$). The different subcases (of visible-visible, visible-invisible and invisible-invisible) are determined by (a) whether there is sliding ($VV_1$, $VI_2$, $VI_3$ and $II_1$) or not ($VV_2$, $VI_1$ and $II_2$), and (if there is sliding:) by (b) the direction of sliding flow and finally (c) whether the unfolding leads to singularities on $\Sigma^{sl}$ of the sliding vector-field, see also \cite[Fig. 2]{kristiansen2015a}. In the present paper,  we focus on the $VI_3$-case which we characterize in the following result.
  \begin{proposition}\proplab{VI3}
Consider a PWS system with a $VI_3$ two-fold. Then there exist local coordinates such that
\begin{equation}\label{VI3cond1}
\begin{aligned}
 \begin{cases}
  X^+(0,0) &>0,\\
  Y^+(0,0) &=0,\\
  \frac{\partial}{\partial x} Y^+(0,0) &>0,
 \end{cases}\quad 
 \begin{cases}
  X^-(0,0) &<0,\\
  Y^-(0,0) &=0,\\
  \frac{\partial}{\partial x} Y^-(0,0) &<0,
 \end{cases}
\end{aligned}
\end{equation}
and
\begin{align}
\left(X^-\frac{\partial}{\partial x} Y^+-X^+\frac{\partial}{\partial x} Y^-\right)(0,0)>0.\label{VI3cond2}
\end{align}
In particular, in the PWL case, then there exists an invertible affine map $\Phi:(x,y)\mapsto (\tilde x,\tilde y)$ such that $\widetilde Z^\pm:=\Phi^*(Z^\pm)$ satisfy
\begin{align}\label{linearPWS}
 \widetilde Z^- (\tilde x,\tilde y)&=\begin{bmatrix}
-1                    +  \beta_- \tilde y\\
                          - \tilde x+\gamma_- \tilde y\end{bmatrix},\quad
                     \widetilde Z^+(\tilde x,\tilde y) = 
                     \begin{bmatrix}
                    B+\alpha_+ \tilde x + \beta_+ \tilde y\\
                    \delta_+\tilde x + \gamma_+ \tilde y\end{bmatrix}.
                    \end{align}
                    Here $B>\delta_+>0$, $\Sigma^{sl} = ]-\infty,0[\cup ]0,\infty[$, $\Sigma^T =\{0\}$,
                    \begin{align*}
                    \gamma_- = \operatorname{tr}(D Z^-),\quad \beta_-= \operatorname{det}(D Z^-),
                    \end{align*}
                    and
                    \begin{align*}
                        \widetilde X^{sl}(x) = \frac{1}{1+\delta_+} \left(B-\delta_++\alpha_+x\right).
                    \end{align*}
\end{proposition}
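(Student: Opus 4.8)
The plan is to proceed in two stages: first reduce the geometric $VI_3$ data to the sign conditions \eqref{VI3cond1}--\eqref{VI3cond2} by straightening the switching manifold, and then, in the PWL case, construct an explicit affine change of coordinates realizing \eqref{linearPWS}. For the first stage I would translate the two-fold to the origin and choose local coordinates in which $h(x,y)=y$; this is possible since $\nabla h\neq 0$, and in the PWL case it is an affine change. Writing $Z^\pm=(X^\pm,Y^\pm)$, the Lie-derivative conditions of item (6) become $Y^\pm(0,0)=0$ and $X^\pm(0,0)\neq 0$, while $(Z^\pm)^2(h)(0,0)=X^\pm(0,0)\,\partial_x Y^\pm(0,0)$. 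The visibility criterion of item (5) then reads $(Z^+)^2(h)(0,0)>0$ (visible from above) and $(Z^-)^2(h)(0,0)>0$ (invisible from below), so that $X^+\partial_x Y^+$ and $X^-\partial_x Y^-$ are both positive at the origin. Using the residual reflection $x\mapsto -x$ I would fix $X^+(0,0)>0$, forcing $\partial_x Y^+(0,0)>0$. The presence of sliding---the feature distinguishing $VI_3$ from the crossing case $VI_1$---forces $\partial_x Y^-(0,0)$ to have sign opposite to $\partial_x Y^+(0,0)$ (otherwise $\Sigma\setminus\{0\}$ would be entirely crossing), giving $\partial_x Y^-(0,0)<0$ and hence $X^-(0,0)<0$; this yields \eqref{VI3cond1}. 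Finally, a short computation of the Filippov field \eqref{Zsl} on $\Sigma$ shows that the limiting sliding velocity at the origin has the sign of $(X^-\partial_x Y^+-X^+\partial_x Y^-)(0,0)$, and requiring the sliding flow to point in the $+x$ direction (the orientation characterizing $VI_3$) is exactly \eqref{VI3cond2}.

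For the PWL normal form I would look for a linear map $\Phi\colon \tilde x=qx+ry,\ \tilde y=sy$, the most general invertible affine map fixing the origin and sending $\{y=0\}$ to $\{\tilde y=0\}$. Imposing the three requirements that on $\Sigma$ one has $\widetilde X^-\equiv -1$ (constant) and $\widetilde Y^-=-\tilde x$ determines $q,r,s$ uniquely, and the signs in \eqref{VI3cond1} guarantee $q,s>0$, so $\Phi$ is an orientation-preserving isomorphism. Because the restriction to $\Sigma$ already pins the $(1,1)$ and $(2,1)$ entries of the transformed linear part, the full vector field automatically takes the form $\widetilde Z^-=(-1+\beta_-\tilde y,\,-\tilde x+\gamma_-\tilde y)$; the remaining coefficients are then read off as the conjugation invariants $\gamma_-=\operatorname{tr}(DZ^-)$ and $\beta_-=\det(DZ^-)$.

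Applying the same $\Phi$ to $Z^+$, the two-fold condition $Y^+(0,0)=0$ makes the constant term of $\widetilde Y^+$ vanish, $B:=\widetilde X^+(0,0)=qX^+(0,0)>0$ by the signs of $q$ and $X^+(0,0)$, and $\delta_+:=\partial_{\tilde x}\widetilde Y^+>0$ for the same reason; this produces the stated shape of $\widetilde Z^+$. The one inequality that is not automatic is $B>\delta_+$, and I expect this to be the crux: expressing $B-\delta_+$ through $q,r,s$ and the entries of $Z^\pm$ at the origin, it collapses---after the cancellations forced by the choice of $\Phi$---to a positive multiple of $(X^-\partial_x Y^+-X^+\partial_x Y^-)(0,0)$, which is positive precisely by \eqref{VI3cond2}. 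Thus the geometric condition \eqref{VI3cond2} and the algebraic condition $B>\delta_+$ are two faces of the same inequality.

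It then remains to substitute the on-$\Sigma$ expressions for $\widetilde X^\pm,\widetilde Y^\pm$ into the Filippov formula \eqref{Zsl}: the numerator $\widetilde X^-\widetilde Y^+-\widetilde X^+\widetilde Y^-$ and the denominator $\widetilde Y^+-\widetilde Y^-$ each carry a factor $x$, and after cancellation one obtains $\widetilde X^{sl}(x)=(B-\delta_++\alpha_+x)/(1+\delta_+)$, while $\widetilde Y^+\widetilde Y^-=-\delta_+ x^2<0$ for $x\neq 0$ confirms $\Sigma^{sl}=\,]-\infty,0[\,\cup\,]0,\infty[$ and $\Sigma^T=\{0\}$. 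Apart from the identity linking $B-\delta_+$ to \eqref{VI3cond2}, every step is either a straightening reduction or routine linear algebra, so the main obstacle is genuinely that single sign computation.
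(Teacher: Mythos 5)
Your proposal is correct and follows essentially the same route as the paper: the affine map you characterize by the normalization $\widetilde X^-\equiv -1$, $\widetilde Y^-=-\tilde x$ on $\Sigma$ is exactly the paper's composition of the $y$-fibered shear $\tilde x = x - (A_{11}^-/A_{21}^-)y$ with a diagonal scaling, the remaining entries of $\widetilde Z^-$ are likewise read off from the trace/determinant invariance, and the identification of $B-\delta_+>0$ with \eqref{VI3cond2} (via $B-\delta_+=(b_1^-A_{21}^+-b_1^+A_{21}^-)/(b_1^-A_{21}^-)$) is precisely the paper's final step. The only difference is that for the first part the paper simply cites \cite{RHKK}, whereas you supply the derivation from the visibility and sliding conditions; your computations there (in particular $(Z^\pm)^2(h)(0,0)=X^\pm(0,0)\,\partial_x Y^\pm(0,0)$ and the sign of the limiting sliding velocity) are correct.
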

\begin{proof}
The first part of the proposition follows from the definition of the $VI_3$-case, see e.g. \cite[Section 2.2]{RHKK}. Now regarding the PWL case, we write
\begin{align*}
Z^\pm(z) = A^\pm z + b^\pm,
\end{align*}
with $A^\pm = (A_{ij}^\pm)$ and $b^\pm=(b^\pm_1,b^\pm_2)^T$. Then
the conditions \eqref{VI3cond1} and \eqref{VI3cond2} become
\begin{align*}
 \begin{cases}
  b^+_1 &>0,\\
  b^+_2 &=0,\\
  A_{21}^+ &>0,
 \end{cases}\quad 
 \begin{cases}
  b^-_1 &<0,\\
  b^-_2 &=0,\\
  A_{21}^- &<0,
 \end{cases}
\end{align*}
and
\begin{align}
b_1^-A_{21}^+-b_1^+A_{21}^->0.\label{VI3cond2b}
\end{align}
 Since $A_{21}^-\ne 0$, we can transform $Z^-$ into a Li\'{e}nard form $\widetilde Z^-$ with $\widetilde A_{11}^-=0$ using an $y$-fibered transformation defined by 
 \begin{align*}
  (x,y)\mapsto \tilde x = x -\frac{A_{11}^-}{A_{21}^-}y.
 \end{align*}
Dropping the tildes, we can then subsequently apply the scalings
\begin{align*}
 (x,y)\mapsto \begin{cases} \tilde x &= \vert b_1^-\vert^{-1} x,\\
               \tilde y &= \vert b_1^-\vert^{-1} \vert A_{21}^-\vert^{-1}   y
              \end{cases}
              \end{align*}
This gives a $\widetilde{Z}^-$ with 
\begin{align*}
\widetilde A^- = \begin{bmatrix} 
0 & \operatorname{det}(A^-)\\
-1 & \operatorname{tr}(A^-)
\end{bmatrix},\quad 
\tilde b^-=\begin{bmatrix}-1\\0 \end{bmatrix}.
\end{align*}
Setting $\delta_+:=\widetilde A_{21}^+,\, B:=\tilde b_1^+$, \eqref{VI3cond2b} becomes $B-\delta_+>0$. The expression for $\widetilde X^{sl}$ follows easily from \eqref{Zsl}. This completes the proof.
\end{proof}

In the rest of this paper, when we refer to \eqref{linearPWS}, we use $ Z^\pm, X^{sl}$ instead of $\widetilde Z^\pm,\widetilde X^{sl}$.


\subsection{Regularized PWL $VI_3$ two-fold and the slow divergence integral}\label{section-regularized}
We now consider (\ref{Zreg}) with $h(x,y)=y$ in the form
\begin{align}\label{regul-normalnovo}
  \dot z &=Z^+(z,\lambda)\phi(y\epsilon^{-2}) +Z^-(z,\lambda) (1-\phi(y\epsilon^{-2})),
 \end{align}
where $0<\epsilon\ll 1$, $\lambda\sim 0\in \mathbb R$, $Z^\pm(\cdot,\lambda)=\left(X^\pm(\cdot,\lambda),Y^\pm(\cdot,\lambda)\right)$ are planar affine vector-fields, depending smoothly on a parameter $\lambda$, and $Z^\pm(\cdot,0)=Z^\pm(\cdot)$, with $Z^\pm(\cdot)$ defined in \eqref{linearPWS}. 
We add the following technical assumptions on $\phi$.
\begin{enumerate}[label=({A}{{\arabic*}})]
%
 \item \label{assA} The function $\phi$ has the following asymptotics when $s\to\pm\infty$: \begin{align*}
 \phi(s)\rightarrow \begin{cases}
                     1 & \text{for}\quad  s\rightarrow \infty,\\
                     0 & \text{for}\quad s\rightarrow -\infty.
                    \end{cases}
\end{align*}
\item \label{assB} The function $\phi$ is strictly monotone, i.e.,  $\phi'(s)>0$ for all $s\in \mathbb R$.
\item \label{assC} The function $\phi$ is smooth at $\pm \infty$ in the following sense: Each of the functions
\begin{align*}
 \phi_+(s):=\begin{cases}
             1 & \text{for}\quad s=0,\\
             \phi(s^{-1}) & \text{for}\quad s>0,
            \end{cases},\quad
            \phi_-(s):=\begin{cases}
             \phi(-s^{-1}) & \text{for}\quad s>0,\\
             0 & \text{for}\quad s=0,
            \end{cases}
\end{align*}
are smooth at $s=0$. 
%
\end{enumerate}
%
Assumption \ref{assC} means that (\ref{regul-normalnovo}) is a regular perturbation of $Z^+(\cdot,\lambda)$ or $Z^-(\cdot,\lambda)$ outside any fixed neighborhood of $y=0$, see \cite{RHKK}. 
Moreover, it is well-known (see \cite{Sotomayor96} and \cite[Theorem 2.2]{RHKK}) that once Assumption \ref{assB} holds, sliding in \eqref{linearPWS} implies existence of local invariant manifolds for \eqref{regul-normalnovo}, which carry a reduced flow that is a regular perturbation of $\dot x=X^{sl}(x)$, with $X^{sl}$ given in \propref{VI3}:
\begin{equation}
    \label{sliding-linear}
    X^{sl}(x)=\frac{1}{1+\delta_+}\left(B-\delta_++\alpha_+ x\right).
\end{equation}
When $\alpha_+\ne 0$, $X^{sl}$ has a simple zero 
\begin{equation}
    \label{simple-zero}
    x^*=-\frac{B-\delta_+}{\alpha_+}\ne 0,
\end{equation}
and when $\alpha_+=0$, $X^{sl}(x)$ is positive for all $x\in \mathbb R$.

\smallskip

The following assumption plays an important role when we study the existence and number of sliding limit cycles of \eqref{regul-normalnovo}, see \cite{RHKK}.

\begin{enumerate}[label=({A}{4})]
    \item \label{assExtra} We assume that 
\begin{equation}
    \label{NZvelocity}
    \frac{\partial}{\partial\lambda} Y^- \frac{\partial}{\partial x} Y^+\ne \frac{\partial}{\partial\lambda} Y^+ \frac{\partial}{\partial x} Y^-
\end{equation}
at $(z,\lambda)=(0,0)$.
\end{enumerate}
Let us explain the meaning of Assumption \ref{assExtra}. The fold point $(x,y)=(0,0)$ from above and below is persistent to smooth perturbations of \eqref{linearPWS}. Indeed, The Implicit Function Theorem and \eqref{VI3cond1} imply the existence of smooth $\lambda$-families of fold points $z_+=(x_+(\lambda),0)$ from above in terms of $Z^+(\cdot,\lambda)$ and fold points $z_-=(x_-(\lambda),0)$ from below in terms of $Z^-(\cdot,\lambda)$, for $\lambda\sim 0$, with $x_\pm(0)=0$. If we assume non-zero velocity of the collision between $z_+$ and $z_-$ for $\lambda=0$ at the origin $z=0$:
$$ x_+'(0)-x_-'(0)=\left(-\frac{\frac{\partial}{\partial\lambda} Y^+}{\frac{\partial}{\partial x} Y^+}+\frac{\frac{\partial}{\partial\lambda} Y^-}{\frac{\partial}{\partial x} Y^-}\right)(0,0)\ne 0,$$
then we get \eqref{NZvelocity}. 

\smallskip

Following \cite[Section 3]{RHKK}, to study the existence and number of sliding limit cycles of \eqref{regul-normalnovo} produced by the canard cycle $\Gamma_x$ (Fig. \ref{fig:2examples}(a)) for $(\epsilon,\lambda)\sim (0,0)$, we use the slow divergence integral associated to the segment $[\Pi(x),x]$ at level $\lambda=0$:
\begin{equation}\label{slowdivnovo}
 I(x) =\int_{\Pi(x)}^x \frac{(Y^+-Y^-)^2}{X^-Y^+-X^+Y^-}(u,0,0) \phi'\left(\phi^{-1} \left(\frac{-Y^-}{Y^+-Y^-}(u,0,0)\right)\right)du,
\end{equation}
for $x>0$. See (3.1) in \cite{RHKK}. Now, if we use \eqref{linearPWS}, then \eqref{slowdivnovo} becomes 

\begin{equation}
    \label{SDI-linearPWS}
    I(x)=(1+\delta_+)\phi'\left(\phi^{-1}\left(\frac{1}{1+\delta_+}\right)\right)\int_{\Pi(x)}^{x}\frac{udu}{X^{sl}(u)}.
\end{equation}
Notice that $I(0)=0$ and that the expression in front of the integral in \eqref{SDI-linearPWS} is positive. The domain of $I$ is a subset of the domain of $\Pi$ (Section \ref{subsection-halfmap}) and it depends on the location of $x^*$ defined in \eqref{simple-zero}. More precisely, the domain of $I$ is the biggest subset $[0,b[$ ($b>0$ or $b=+\infty$) of the domain of $\Pi$ such that the sliding vector field $X^{sl}$ given in \eqref{sliding-linear} is positive on $[\Pi(x),x]$, for all $x\in [0,b[$.  It should be clear that the domain of $I$ is equal to the domain of $\Pi$ when $\alpha_+=0$. For more details see later sections.
\smallskip

\begin{remark}
    As emphasized by \eqref{SDI-linearPWS}, in the PWL case the $\phi$-dependent term of the integrand of $I$ in \eqref{slowdivnovo} is a constant and can therefore go outside the integration. In this sense, our analysis in the PWL case will be independent of the regularization function. This is in contrast to the general case, see \cite{RHKK}. Here we constructed an arbitrary number of limit cycles by varying $\phi$, even in the case where $Z^-$ is affine and $Z^+$ is quadratic.
\end{remark}
We assume that
\[\lambda=\epsilon\widetilde{\lambda}\]
where $\widetilde{\lambda}\sim 0$. We denote by $\cycl(\Gamma_{x_0})$ the cyclicity of the canard cycle $\Gamma_{x_0}$ inside \eqref{regul-normalnovo}, for $x_0\in ]0,b[$. More precisely, we say that the cyclicity of $\Gamma_{x_0}$ inside \eqref{regul-normalnovo} is bounded by $N\in\mathbb N$ if there exist $\epsilon_0>0$, $\delta_0>0$ and a neighborhood $\mathcal U$ of $0$ in the $\widetilde{\lambda}$-space such that \eqref{regul-normalnovo} has at most $N$
limit cycles, lying within Hausdorff distance $\delta_0$
of $\Gamma_{x_0}$, for all $(\epsilon,\widetilde{\lambda})\in ]0,\epsilon_0]\times \mathcal U$. We call the
smallest $N$ with this property the cyclicity of $\Gamma_{x_0}$ and denote it by $\cycl(\Gamma_{x_0})$. 

 We define $\cycl(\cup_{x\in J}\Gamma_x)$ in a similar way, where $J=[\theta,b-\theta]$ (resp. $J=[\theta,\frac{1}{\theta}]$)  for $b>0$ (resp. $b=+\infty$), with any small and fixed $\theta>0$. 
 
 The following theorem is a direct consequence of \cite{RHKK}.
 \begin{theorem}
     \label{theorem-cyclicity-RHKK} Consider \eqref{regul-normalnovo} and suppose that Assumptions \ref{assA} through \ref{assExtra} are satisfied.
     Then the following statements are true.
     \begin{enumerate}
         \item If $I(x_0)<0$ (resp. $I(x_0)> 0$), then $\cycl(\Gamma_{x_0})=1$ and the limit cycle is hyperbolic and attracting (resp. repelling) when it exists. Moreover, if $I$ has no zeros in $]0,b[$, then $\cycl(\cup_{x\in J}\Gamma_x)=1$. 
         \item If $I$ has a zero of multiplicity $l\ge 1$ at $x=x_0$, then $\cycl(\Gamma_{x_0})\le l+1$. When $I$ has at most $l\ge 1$ zeros in $]0,b[$, counting multiplicity, then we have $\cycl(\cup_{x\in J}\Gamma_x)\le l+1$.
         \item Suppose that $I$ has exactly $l\ge 1$ simple zeros $x_1<\dots<x_{l}$ in $]0,b[$. If $x_{l+1}\in ]x_{l},b[$, then there is a smooth function $\widetilde{\lambda}=\widetilde{\lambda}_c(\epsilon)$, with $\widetilde{\lambda}_c(0)=0$, such that \eqref{regul-normalnovo} with $Z^\pm(\cdot,\epsilon \widetilde{\lambda}_c(\epsilon))$ has $l+1$ periodic orbits $\mathcal{O}_1^\epsilon,\dots,\mathcal{O}_{l+1}^\epsilon$, for each $\epsilon\sim 0$ and $\epsilon>0$. The periodic orbit $\mathcal{O}_i^\epsilon$ is isolated, hyperbolic and Hausdorff close to the canard cycle $\Gamma_{x_i}$, for each $i=1,\dots, l+1$.
     \end{enumerate}
 \end{theorem}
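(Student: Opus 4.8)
The plan is to derive this theorem as a direct application of the general cyclicity results established in \cite{RHKK}, the real work being to verify that the regularized PWL $VI_3$ model \eqref{regul-normalnovo}, together with Assumptions \ref{assA}--\ref{assExtra}, fits into the framework developed there. First I would recall that, under Assumption \ref{assB}, the sliding region carries attracting and repelling slow (invariant) manifolds for \eqref{regul-normalnovo}, obtained by GSPT after a blowup of the two-fold; this is precisely the content of \cite[Theorem 2.2]{RHKK} together with \cite{Sotomayor96}. The reduced flow on these manifolds is a regular perturbation of $\dot x = X^{sl}(x)$, so that the canard cycle $\Gamma_x$ is the concatenation of a fast orbit of $Z^-$ with a sliding segment $[\Pi(x),x]$ along which the slow dynamics is governed by $X^{sl}$. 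The invisibility of the fold from below (the defining $VI_3$ hypothesis) is what guarantees that $\Gamma_x$ genuinely traverses both stable and unstable sliding portions, so that it qualifies as a canard cycle in the sense of \cite{RHKK} rather than an ordinary sliding cycle.

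With this identification in hand, I would invoke the standard canard mechanism (De Maesschalck--Dumortier--Roussarie, adapted to the regularized PWS setting in \cite{RHKK}): one constructs a Poincar\'e return map $P_\epsilon$ on a section transverse to the family $\cup_x\Gamma_x$ and studies the displacement map $P_\epsilon(x)-x$. The leading-order asymptotics of the logarithm of the derivative of $P_\epsilon$ along $\Gamma_x$ are controlled by the slow divergence integral \eqref{slowdivnovo}, which in the present linear case collapses to \eqref{SDI-linearPWS}. Here Assumption \ref{assC} is exactly what makes the transition maps near $y=0$ smooth enough for these expansions to be valid, while Assumption \ref{assExtra}, via $\lambda=\epsilon\widetilde\lambda$, ensures that $\widetilde\lambda$ acts as a genuine transverse unfolding (breaking parameter) of the canard cycles. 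In particular, a value $I(x_0)\ne 0$ forces the return map to be, to leading order, a contraction (if $I(x_0)<0$) or an expansion (if $I(x_0)>0$), which immediately yields a single hyperbolic attracting, respectively repelling, limit cycle; this gives statement~(1).

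For statement~(2) I would use the estimate from \cite{RHKK} that writes the displacement map, after division by a nonvanishing factor, as $\epsilon$-close in a suitable $C^k$-sense to a function whose zeros are governed by $I$ and its derivatives; a bound of $l$ on the number of zeros of $I$ in $]0,b[$ counted with multiplicity then translates, through a Rolle/derivative-division argument, into the bound $\cycl(\cup_{x\in J}\Gamma_x)\le l+1$, the extra unit being a structural feature of the transition-map estimates. Statement~(3) is the realization part: at a simple zero $x_i$ of $I$ the implicit function theorem applied to the rescaled displacement map produces a hyperbolic limit cycle $\mathcal O_i^\epsilon$ close to $\Gamma_{x_i}$, and by tuning $\widetilde\lambda=\widetilde\lambda_c(\epsilon)$ to split one additional canard one manufactures the extra orbit $\mathcal O_{l+1}^\epsilon$ near $\Gamma_{x_{l+1}}$, all orbits being isolated and hyperbolic by simplicity of the zeros. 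The main obstacle, I expect, will not be the abstract citation but checking that the degenerate linear geometry is not exceptional relative to the hypotheses of \cite{RHKK}: one must confirm that the domain of $I$ is indeed the maximal subinterval of the domain of $\Pi$ on which $X^{sl}>0$ throughout $[\Pi(x),x]$ (so that no spurious sliding singularity interrupts the cycle), and that the normal-form reduction of \propref{VI3} preserves the sign and nondegeneracy conditions required by the slow divergence machinery; once these are in place, all three statements follow verbatim from the cyclicity theorems of \cite{RHKK}.
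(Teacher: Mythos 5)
Your proposal is correct and follows essentially the same route as the paper, whose entire proof consists of citing \cite{RHKK}: statements 1 and 2 are attributed to \cite[Proposition 3.2]{RHKK} and statement 3 to \cite[Theorem 3.1]{RHKK}. Your additional discussion of the slow-manifold/displacement-map mechanism and of the roles of Assumptions \ref{assA}--\ref{assExtra} is a faithful (if more detailed) account of why those cited results apply, so there is no substantive difference in approach.
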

 \begin{proof}
     Theorem \ref{theorem-cyclicity-RHKK}.1 and Theorem \ref{theorem-cyclicity-RHKK}.2 follow from \cite[Proposition 3.2]{RHKK}, and Theorem \ref{theorem-cyclicity-RHKK}.3 follows from \cite[Theorem 3.1]{RHKK}.
 \end{proof}

\subsection{Poincar\'{e} half-map}\label{subsection-halfmap}

In this section, we focus on $Z^-$ defined in \eqref{linearPWS} (remember we drop the tildes). 
The study of the transition map (often called Poincar\'{e} half-map) from $x>0,y=0$ to $x<0,y=0$ by following the orbits of $Z^-$ in forward time  can be found in \cite{Carmona}. We denote by $\Pi$ the Poincar\'{e} half-map  (Fig. \ref{fig:2examples}(a)).  
From \cite[Theorem 8]{Carmona} and \cite[Theorem 19]{Carmona} it follows that we can use an integral characterization for the Poincar\'{e} half-map $\Pi$:

\begin{equation}
    \label{intcharact}
    \int_{\Pi(x)}^x\frac{-udu}{V(u)}=0,
\end{equation}
where
\begin{align*}
    V(x) = \beta_-x^2-\gamma_-x+1.
\end{align*}
Notice that $V$ is related to the characteristic polynomial associated with $Z^-$:
\begin{align*}
    P(\lambda) = \lambda^2-\gamma_- \lambda + \beta_-,
\end{align*}
by
\begin{align*}
    V(u) = u^2 P(u^{-1}),
\end{align*}
for $u\ne 0$. From this, it can be easily seen that the following lemma holds.
\begin{lemma}
    \label{lemma-LRkappa}
    The following statements are true.
    \begin{enumerate}
        \item If $\beta_-\ne 0$, then $Z^-$ defined in \eqref{linearPWS} has a singularity at $(x,y)=(\frac{\gamma_-}{\beta_-},\frac{1}{\beta_-})$ with eigenvalues 
\begin{equation}\label{eigenvaluesZ-}
    \varkappa_\pm=\frac{\gamma_-\pm\sqrt{\gamma_-^2-4\beta_-}}{2}.
\end{equation}
\item If $\beta_-\ne 0$ and $\gamma_-^2-4\beta_-\ge 0$, then the invariant affine eigenline corresponding to the eigenvalue $\varkappa_\pm$ in \eqref{eigenvaluesZ-} is given by 
\begin{equation}\label{eigenlinene0}
x=\varkappa_\mp y+\frac{1}{\varkappa_\mp},
\end{equation}
and it intersects the $x-$axis at the zero $x=\frac{1}{\varkappa_\mp}$ of the polynomial $V$. 
\item If $\beta_-=0$ and $\gamma_-\ne 0$, then the line 
\begin{equation}\label{eigenline=0}
x=\gamma_-y+\frac{1}{\gamma_-}
\end{equation}
is invariant w.r.t. $Z^-$. Moreover, the line intersects the $x-$axis at the zero $x=\frac{1}{\gamma_-}$ of $V$.
    \end{enumerate}
\end{lemma}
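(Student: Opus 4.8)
The plan is to verify each of the three statements by direct computation, the key organizing device being the relation $V(u)=u^2P(u^{-1})$ already recorded above, which converts zeros of $V$ into reciprocals of roots of the characteristic polynomial $P$.

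For statement (1) I would first locate the singularity of $Z^-$ by solving $Z^-(x,y)=0$ with $Z^-$ as in \eqref{linearPWS}: the first component $-1+\beta_- y=0$ forces $y=\tfrac{1}{\beta_-}$ (this is exactly where $\beta_-\ne0$ enters), and the second component $-x+\gamma_- y=0$ then gives $x=\tfrac{\gamma_-}{\beta_-}$. The linear part of $Z^-$ is the constant matrix $DZ^-=\left[\begin{smallmatrix}0 & \beta_-\\ -1 & \gamma_-\end{smallmatrix}\right]$, whose trace and determinant are $\gamma_-$ and $\beta_-$, so its characteristic polynomial is precisely $P(\lambda)=\lambda^2-\gamma_-\lambda+\beta_-$ and its eigenvalues are the two roots $\varkappa_\pm$ in \eqref{eigenvaluesZ-}. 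Note that $\beta_-=\varkappa_+\varkappa_-\ne0$ guarantees $\varkappa_\pm\ne0$, so the expressions $\tfrac{1}{\varkappa_\mp}$ appearing later are well defined.

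For statement (2), where $\gamma_-^2-4\beta_-\ge0$ makes the eigenvalues real, I would compute an eigenvector for $\varkappa_\pm$ from the second row of $(DZ^--\varkappa_\pm I)v=0$, namely $-v_1+(\gamma_--\varkappa_\pm)v_2=0$; taking $v_2=1$ and using $\gamma_-=\varkappa_++\varkappa_-$ yields $v=(\varkappa_\mp,1)$. Writing the affine eigenline through the singularity $(\tfrac{\gamma_-}{\beta_-},\tfrac{1}{\beta_-})$ in the direction $v$ and eliminating the parameter gives $x=\varkappa_\mp y+\tfrac{\gamma_--\varkappa_\mp}{\beta_-}$; a second use of Vieta ($\gamma_--\varkappa_\mp=\varkappa_\pm$ and $\beta_-=\varkappa_+\varkappa_-$) collapses the constant to $\tfrac{1}{\varkappa_\mp}$, which is the claimed form. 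Setting $y=0$ produces the $x$-intercept $x=\tfrac{1}{\varkappa_\mp}$, and that this is a zero of $V$ is immediate from $V(\tfrac{1}{\varkappa_\mp})=\varkappa_\mp^{-2}P(\varkappa_\mp)=0$ since $\varkappa_\mp$ is a root of $P$.

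For statement (3), with $\beta_-=0$ and $\gamma_-\ne0$ the field degenerates to $Z^-=(-1,\,-x+\gamma_- y)$, and I would verify invariance of the line by checking that its defining function $G(x,y)=x-\gamma_- y-\tfrac{1}{\gamma_-}$ has vanishing Lie derivative along $Z^-$ on the line: a direct substitution of $x=\gamma_- y+\tfrac{1}{\gamma_-}$ into $\nabla G\cdot Z^-=-1+\gamma_- x-\gamma_-^2 y$ gives $0$, and since the coefficient in $x$ is nonzero this already forces invariance of the full affine line. The $x$-intercept is $\tfrac{1}{\gamma_-}$, and because $V(u)=1-\gamma_- u$ when $\beta_-=0$ we get $V(\tfrac{1}{\gamma_-})=0$. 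None of the three parts presents a genuine obstacle, the arguments being elementary linear algebra; the only point requiring a little care is the Vieta bookkeeping in (2) needed to bring both the eigenvector and the constant term of the eigenline into the precise symmetric form $x=\varkappa_\mp y+\tfrac{1}{\varkappa_\mp}$, together with the clean identification of the intercept as a zero of $V$ via the factorization $V(u)=u^2P(u^{-1})$.
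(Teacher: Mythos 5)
Your proof is correct and follows exactly the route the paper intends: the paper omits the argument, stating only that the lemma "can be easily seen" from the identity $V(u)=u^2P(u^{-1})$ relating $V$ to the characteristic polynomial of $DZ^-$, and your direct computations (singularity location, eigenvalues from trace and determinant, eigenvectors via Vieta, and the Lie-derivative check for the degenerate line) supply precisely the elementary verification being alluded to. No gaps.
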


We denote by $x_L$ and $x_R$ the zeros of $V$ in Lemma \ref{lemma-LRkappa}.2: 
\begin{equation}\label{intersect-pointsRL}
x_L,\ x_R=\frac{1}{\varkappa_\pm},
\end{equation}
where we assume that $x_L<x_R$ if $x_L\ne x_R$.  
\smallskip

The set $\Gamma_x\cup\text{Int}(\Gamma_x)$ belongs to the class ($S_0$) defined in \cite{Carmona} (that is, $\Gamma_x\cup\text{Int}(\Gamma_x)$ contains no singularities of $Z^-$). For more details we refer to Sections \ref{saddle-subsection}--\ref{focus-subsection}, Appendix \ref{subsection-centerappen} and Appendix \ref{nosingularities-subsection}. The Poincar\'{e} half-map $\Pi$ can be extended to $\Pi(0)=0$ and it is analytic in its domain of definition (see \cite{Carmona}). The following discussion is based on Lemma \ref{lemma-LRkappa}. If $\gamma_-^2-4\beta_-<0$, then $Z^-$ has a focus or center in $\{y>0\}$, and the domain of $\Pi$ is $[0,+\infty[$ and the image of $\Pi$ is $]-\infty,0]$ (Fig. \ref{fig:foc-cent} in Section \ref{focus-subsection} and Fig. \ref{fig:centonly} in Appendix \ref{subsection-centerappen}). When $\beta_-<0$, $Z^-$ has a hyperbolic saddle in $\{y<0\}$ and the stable (resp. unstable) straight manifold of the saddle intersects the $x$-axis at $x=x_R>0$ (resp. $x=x_L<0$). In this case the domain of $\Pi$ is $[0,x_R[$ and the image of $\Pi$ is $]x_L,0]$ (Fig. \ref{fig:sedlo} and Section \ref{saddle-subsection}). When $Z^-$ has a hyperbolic node in $\{y>0\}$ with distinct eigenvalues ($\beta_->0$ and $\gamma_-^2-4\beta_->0$), then two straight-line solutions corresponding to the eigenvalues intersect the $x$-axis at $x=x_L$ and $x=x_R$ with $0<x_L<x_R$ or $x_L<x_R<0$. If $0<x_L<x_R$ (resp. $x_L<x_R<0$), then the domain of $\Pi$ is $[0,x_L[$ (resp. $[0,+\infty[$) and the image of $\Pi$ is $]-\infty,0]$ (resp. $]x_R,0]$). We refer to Fig. \ref{fig:cvor} and Section \ref{node-subsection}. System $Z^-$ may have a hyperbolic node in $\{y>0\}$ with repeated eigenvalues ($\beta_->0$ and $\gamma_-^2-4\beta_-=0$). In this case we have one straight-line solution (corresponding to the eigenvalue) which intersects the $x$-axis at $x=x_R\ne 0$. If $x_R>0$ (resp. $x_R<0$), then the domain of $\Pi$ is $[0,x_R[$ (resp. $[0,+\infty[$) and the image of $\Pi$ is $]-\infty,0]$ (resp. $]x_R,0]$). If $Z^-$ has no singularities ($\beta_-=0$), then there exists an invariant line intersecting the $x$-axis at $x=\frac{1}{\gamma_-}$ ($\gamma_-\ne 0$) or orbits of $Z^-$ are parabolas $y=\frac{1}{2}x^2+c$ ($\gamma_-= 0$). In the former case, the domain and image of $\Pi$ are respectively $[0,\frac{1}{\gamma_-}[$ and $]-\infty,0]$ if $\gamma_->0$ or $[0,+\infty[$ and $]\frac{1}{\gamma_-},0]$ if $\gamma_-<0$,
whereas in the latter case the domain and image of $\Pi$ are respectively $[0,+\infty[$ and $]-\infty,0]$. We refer to Fig. \ref{fig:nosing} and Appendix \ref{nosingularities-subsection}.
\smallskip

The function $V$ is positive on the domain and image of $\Pi$. Using \eqref{intcharact} we get 
\begin{equation}
    \label{Pi-derivative}
    \Pi'(x)=\frac{xV(\Pi(x))}{\Pi(x)V(x)}.
\end{equation}
We have $\Pi'<0$ and $\Pi'(0)=-1$. 

\subsection{Main result}\label{section-mainresult}

Recall that the slow divergence integral $I$ is given by \eqref{SDI-linearPWS}.
Our goal is to study the number of zeros (counting multiplicity) of $I$ in $]0,b[$. We show that $I$ is either identically zero or has at most $1$ zero (counting multiplicity) in $]0,b[$.
\begin{theorem}
    \label{theorem-linearzeros}
    If ($\beta_-\ne 0$ and $(\alpha_+,\gamma_-)\ne (0,0)$) or ($\beta_-= 0$ and $\alpha_++\gamma_-(B-\delta_+)\ne 0$), then $I$ has at most $1$ zero counting multiplicity in $]0,b[$. Moreover, there exist parameter values in the $(\beta_-,\gamma_-,B,\alpha_+,\delta_+)$-space for which this condition is satisfied and $I$ has a simple zero in $]0,b[$. 
    
    If the condition is not satisfied, then $I$ is identically zero. 
\end{theorem}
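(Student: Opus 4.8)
The plan is to reduce the zero-counting of $I$ to a monotonicity statement about an auxiliary function built from the Poincar\'e half-map, and then to run a sign-change argument. Since the prefactor $(1+\delta_+)\phi'(\phi^{-1}(1/(1+\delta_+)))$ in \eqref{SDI-linearPWS} is a positive constant, $I$ has the same zeros, with the same multiplicities, as
\[
K(x):=\int_{\Pi(x)}^x\frac{u\,du}{W(u)},\qquad W(u):=(1+\delta_+)X^{sl}(u)=(B-\delta_+)+\alpha_+u .
\]
Differentiating by Leibniz gives $K'(x)=\frac{x}{W(x)}-\Pi'(x)\frac{\Pi(x)}{W(\Pi(x))}$, and substituting $\Pi'(x)=\frac{xV(\Pi(x))}{\Pi(x)V(x)}$ from \eqref{Pi-derivative} to eliminate $\Pi'$, a short computation collapses the result to
\[
K'(x)=\frac{x\,(x-\Pi(x))}{W(x)V(x)W(\Pi(x))}\,M(x),\qquad
M(x):=\beta_-(B-\delta_+)(x+\Pi(x))+\alpha_+\beta_-\,x\,\Pi(x)-(\gamma_-(B-\delta_+)+\alpha_+).
\]
On $]0,b[$ one has $x>0>\Pi(x)$, hence $x-\Pi(x)>0$; moreover $V>0$ on the domain and image of $\Pi$, and $W=(1+\delta_+)X^{sl}>0$ on $[\Pi(x),x]$ by the very definition of the domain of $I$. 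Thus the prefactor of $M(x)$ is strictly positive, so on $]0,b[$ both the zeros (with multiplicity) and the sign of $K'$ coincide with those of $M$.

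The structural fact is $K(0)=0$ (as $\Pi(0)=0$). Granting that $M$ has at most one zero in $]0,b[$ and that it is a sign change, the conclusion follows at once. If $M$ has no zero, then $K'$ has constant sign, $K$ is strictly monotone, and $K(0)=0$ forces $K\neq0$ on $]0,b[$ (no zeros). If $M$ changes sign once, at $x_0$, then $K$ is strictly monotone on $]0,x_0]$ and strictly monotone in the opposite direction on $[x_0,b[$, with a strict extremum at $x_0$; since $K(0)=0$, $K$ keeps a fixed sign on $]0,x_0]$ and can cross zero at most once, transversally, on $]x_0,b[$. In either case $K$ (hence $I$) has at most one zero in $]0,b[$, and it is simple when present, which yields both the bound $1$ counting multiplicity and the simplicity.

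It remains to prove that $M$ has at most one, sign-changing, zero; this is the heart of the matter. Writing $M(x)=\beta_-R(x)-(\gamma_-(B-\delta_+)+\alpha_+)$ with $R(x):=(B-\delta_+)(x+\Pi(x))+\alpha_+\,x\,\Pi(x)$, it suffices to show $R$ is strictly monotone on $]0,b[$, so that each level set is a single transversal point. If $\beta_-=0$ then $M$ is the constant $-(\gamma_-(B-\delta_+)+\alpha_+)$, nonzero under the hypothesis, so $K'$ has constant sign and $I$ has no zero. If $\beta_-\neq0$, differentiating and again substituting \eqref{Pi-derivative} gives the compact form
\[
R'(x)=W(\Pi(x))+\Pi'(x)W(x)=\frac{V(\Pi(x))}{\Pi(x)}\big(\psi(x)+\psi(\Pi(x))\big),\qquad \psi(u):=\frac{u\,W(u)}{V(u)} .
\]
Since $V(\Pi(x))>0$ and $\Pi(x)<0$, the sign of $R'$ equals minus the sign of $\psi(x)+\psi(\Pi(x))$, so everything reduces to showing $\psi(x)+\psi(\Pi(x))\neq0$ on $]0,b[$. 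This is the main obstacle: as $\psi(x)>0$ while $\psi(\Pi(x))<0$ (using $W>0$), the two terms compete and the inequality can only be settled by quantitative control of $\Pi$. Accordingly I would split into the dynamical cases classified by Lemma~\ref{lemma-LRkappa} (saddle $\beta_-<0$; focus/center $\gamma_-^2-4\beta_-<0$; node with distinct eigenvalues $\beta_->0$, $\gamma_-^2-4\beta_->0$; the repeated-eigenvalue node; and the singularity-free subcases $\beta_-=0$), and in each case exploit the explicit invariant lines and roots $x_L,x_R$ of $V$ from \eqref{intersect-pointsRL}, together with the integral characterization \eqref{intcharact}, to pin down the sign of $\psi(x)+\psi(\Pi(x))$. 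The hypothesis $(\alpha_+,\gamma_-)\neq(0,0)$ enters precisely here, as it is exactly what prevents $\psi(x)+\psi(\Pi(x))$ from vanishing identically.

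The identically-zero alternative drops out of the same computation. When the condition fails with $\beta_-\neq0$ we have $\alpha_+=\gamma_-=0$, so $V(u)=\beta_-u^2+1$ is even, \eqref{intcharact} forces $\Pi(x)=-x$, and $K=\frac1{B-\delta_+}\int_{-x}^{x}u\,du\equiv0$. When the condition fails with $\beta_-=0$ we have $\gamma_-(B-\delta_+)+\alpha_+=0$, whence $M\equiv0$, $K'\equiv0$, and $K\equiv K(0)=0$. Thus $I\equiv0$ in both excluded cases. Finally, for the realizability claim I would exhibit explicit parameters in a case where $b$ and the boundary value $R(b^-)$ are under good control (most cleanly a saddle $\beta_-<0$ or a node configuration), chosen so that $M(0)=-(\gamma_-(B-\delta_+)+\alpha_+)$ and the boundary value $\beta_-R(b^-)-(\gamma_-(B-\delta_+)+\alpha_+)$ have opposite signs; by the strict monotonicity of $R$ already established, $M$ then has exactly one zero in $]0,b[$, a transversal sign change, so $I$ has exactly one simple zero there. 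The determination of $b$ and the sign computations are carried out in Section~\ref{section-proof}.
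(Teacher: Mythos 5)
Your setup is sound and coincides with the paper's: the positive prefactor in \eqref{SDI-linearPWS} is irrelevant, your $M$ is exactly the paper's $\Delta$ from \eqref{equivfunctions}, the positivity of the prefactor of $M$ in $K'$ is argued the same way, the two degenerate cases ($(\alpha_+,\gamma_-)=(0,0)$ giving $\Pi(x)=-x$ and an odd integrand; $\beta_-=0$ with $\alpha_++\gamma_-(B-\delta_+)=0$ giving $K'\equiv 0$) are handled identically, and the Rolle step from ``$M$ has at most one zero'' to ``$K$ has at most one zero counting multiplicity'' is the paper's step as well. The problem is that the heart of the proof --- showing $M$ has at most one zero when $\beta_-\ne 0$ and $(\alpha_+,\gamma_-)\ne(0,0)$ --- is not carried out, and the route you propose for it is not merely incomplete but false. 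You reduce everything to the strict monotonicity of $R$, i.e.\ to $\psi(x)+\psi(\Pi(x))\ne 0$ on $]0,b[$. This fails on an open set of admissible parameters. Indeed, from \eqref{intcharact} one gets $\Pi(x)=-x-\tfrac{2\gamma_-}{3}x^2+O(x^3)$, hence $M'(0)=0$ and
\begin{equation*}
M''(0)=2\alpha_+\beta_-\Pi'(0)+\beta_-(B-\delta_+)\Pi''(0)=-2\beta_-\Bigl(\alpha_++\tfrac{2}{3}\gamma_-(B-\delta_+)\Bigr).
\end{equation*}
Take the focus case $\beta_->0$, $\gamma_->0$, $\gamma_-^2-4\beta_-<0$ and $\alpha_+<0$ with $|\alpha_+|$ small, so that $b=x^*=-(B-\delta_+)/\alpha_+>\tfrac{3}{2\gamma_-}$. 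Then $M(0)=-\alpha_+-\gamma_-(B-\delta_+)<0$ and $M''(0)<0$, so $M$ strictly decreases on an initial interval; yet $M(x)\to-\alpha_+V(x^*)>0$ as $x\to x^{*-}$ by \eqref{barDeltaprop}. Hence $M$ has an interior minimum in $]0,b[$, where $M'=\beta_-R'=0$ and $\psi(x)+\psi(\Pi(x))=0$. So $R$ is not monotone precisely in the regime of Theorem~\ref{thm-focuscase}.1 where the two-limit-cycle phenomenon occurs; the same happens in the node case of Theorem~\ref{thm-distinctnode}.5. Your remark that the hypothesis $(\alpha_+,\gamma_-)\ne(0,0)$ ``is exactly what prevents $\psi(x)+\psi(\Pi(x))$ from vanishing'' confuses identical vanishing with pointwise vanishing.

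The paper avoids this trap by proving the weaker statement directly: it counts intersections of the single orbit $y=\Pi(x)$ of the cubic system \eqref{3degreepolysys} with the fixed algebraic curve $\overline\Delta(x,y)=0$ (a line, a hyperbola, or a pair of lines). Two such intersections (with multiplicity) would force a contact point of $\overline\Delta=0$ with \emph{some} orbit of \eqref{3degreepolysys}; these contact points are the finitely many solutions of \eqref{contactpointsCase1}--\eqref{contactpointsCase2}, and a case-by-case localization (Lemmas \ref{lemma-saddle}, \ref{lemma-nodedistinct}, \ref{lemma-noderepeated}) shows at most one of them has $x$-coordinate in $]0,b[$, which bounds the number of crossings by the boundary behaviour of $hp$ versus $\Pi$ near $x=0$ and $x=b$. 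Your criterion, by contrast, asks that $y=\Pi(x)$ be nowhere tangent to \emph{any level set} of $\overline\Delta$, which is strictly stronger and, as shown above, wrong. The existence part is likewise not established by your argument: you defer it to the paper's own Section~\ref{section-proof}, and the sign-comparison of $M(0)$ and $M(b^-)$ you invoke only yields a zero of $M$, not of $I$; the paper instead produces the simple zero of $I$ itself by comparing $I<0$ near an interior point with $I\to+\infty$ as $x\to x^{*-}$ (divergence of $\int_0^x u\,du/X^{sl}(u)$) in Theorem~\ref{thm-distinctnode}.5, or by a continuity argument from the boundary case $x^*=1/\gamma_-$ in Theorem~\ref{theorem-saddle}.3.
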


\begin{remark}\label{remark-cycl}
If the condition of Theorem \ref{theorem-linearzeros} is satisfied, then $\cycl(\cup_{x\in J}\Gamma_x)\le 2$. This follows directly from Theorem \ref{theorem-cyclicity-RHKK}.2 and Theorem \ref{theorem-linearzeros}. Since the slow divergence integral $I$ can have a simple zero in $]0,b[$ by Theorem \ref{theorem-linearzeros}, a direct consequence of Theorem \ref{theorem-cyclicity-RHKK}.3 is that there exists a regularized PWL system \eqref{regul-normalnovo} with $2$ hyperbolic limit cycles. We also refer to Theorem \ref{theorem-saddle}, Theorem \ref{thm-distinctnode}, Theorem \ref{thm-repeatedeigen} and Theorem \ref{thm-focuscase}.
\end{remark}

 \begin{remark}
      We point out that limit cycles of \eqref{regul-normalnovo} near so-called boundary graphics $\Gamma_0$ (the origin $(x,y)=(0,0)$) and $\Gamma_b$ cannot be studied using Theorem \ref{theorem-linearzeros} and Theorem \ref{theorem-cyclicity-RHKK}. The graphic $\Gamma_b$ can contain (1) the zero $x^*$ of the sliding vector-field $X^{sl}$ as its corner point (in this case $b=x^*$ if $x^*>0$ or $b=\Pi^{-1}(x^*)$ if $x^*<0$), see e.g. Fig. \ref{fig:sedlo}(b) and (d), (2) a hyperbolic saddle located away from the line $y=0$, see Fig. \ref{fig:sedlo}(a), (c) and (e), (3) both the corner point $x^*$ and the hyperbolic saddle away from $y=0$, etc.
      This -- along with the description of canard cycles in the half-plane with a visible fold -- are topics of further study.
\end{remark}

In this paper we do  not treat periodic orbits of \eqref{regul-normalnovo} when $I\equiv 0$. We leave this to future work. We expect that the analysis of this case will depend upon the regularization function. 

\section{Proof of Theorem \ref{theorem-linearzeros}}\label{section-proof}

Let us denote by $\tilde{I}(x)$ the integral in \eqref{SDI-linearPWS}. Using \eqref{Pi-derivative} it follows that 
\begin{equation}
  \label{SDI-derivative-linear}  
 \tilde{I}'(x)=x(x-\Pi(x))\frac{\alpha_+\beta_-x\Pi(x)+\beta_-(B-\delta_+)(x+\Pi(x))-\alpha_+-\gamma_-(B-\delta_+)}{(1+\delta_+)X^{sl}(x)X^{sl}(\Pi(x))V(x)},
\end{equation}
for all $ x\in [0,b[$. From \eqref{SDI-derivative-linear}, $\Pi(x)<0$ for $x>0$ and the definition of the domain of $\Pi$ and $I$ it follows that 
\begin{equation}
    \label{equivfunctions}
    \tilde{I}'(x)=\Psi(x)\Delta(x), 
\end{equation}
where $\Psi(x)>0$ for all $x\in ]0,b[$ and
\begin{equation}
    \label{Delta-equiv}
    \Delta(x)=\alpha_+\beta_-x\Pi(x)+\beta_-(B-\delta_+)(x+\Pi(x))-\alpha_+-\gamma_-(B-\delta_+).\nonumber
\end{equation}
Using \eqref{equivfunctions} it is clear that $x=x_0\in ]0,b[$ is a zero of multiplicity $l$ of $\tilde{I}'$ if and only if $x=x_0\in ]0,b[$ is a zero of multiplicity $l$ of $\Delta$.
\smallskip

Recall the condition given in Theorem \ref{theorem-linearzeros}:
\begin{equation}
    \label{condition-theorem-imp}
    (\beta_-\ne 0 \text{ and } (\alpha_+,\gamma_-)\ne (0,0)) \text{ or } (\beta_-= 0 \text{ and } \alpha_++\gamma_-(B-\delta_+)\ne 0).
\end{equation}
Suppose first that the condition \eqref{condition-theorem-imp} is not satisfied. Thus, we assume that $(\alpha_+,\gamma_-)=(0,0)$ or ($\beta_-= 0$ and $\alpha_++\gamma_-(B-\delta_+)= 0$). Then $I\equiv 0$. Indeed, when $(\alpha_+,\gamma_-)=(0,0)$, then $\Pi(x)=-x $ because the linear system $Z^-$ is invariant under the symmetry $(x,t)\to(-x,-t)$ for $\gamma_-=0$. This and the fact that the integrand in \eqref{SDI-linearPWS} is an odd function ($\alpha_+=0$) imply that $I$ is identically zero. When $\beta_-= 0$ and $\alpha_++\gamma_-(B-\delta_+)= 0$, then from \eqref{SDI-derivative-linear} it follows that $I'\equiv 0$ and, since $I(0)=0$, we have that $I$ is identically zero.  
\smallskip

In the rest of this section we suppose that the condition \eqref{condition-theorem-imp} is satisfied. We may assume that $\gamma_-\ge 0$. Indeed, system \eqref{linearPWS} is invariant under the symmetry  $(x,\alpha_+,\gamma_+,\gamma_-,t)\to (-x,-\alpha_+,-\gamma_+,-\gamma_-,-t)$, and, if we denote by $\Pi_{\beta_-,\gamma_-}$ (resp. $I_{\beta_-,\gamma_-,B,\alpha_+,\delta_+}$) the Poincar\'{e} half-map $\Pi$ (resp. the slow divergence integral $I$) of \eqref{linearPWS}, then using \eqref{SDI-linearPWS} we get
$$I_{\beta_-,\gamma_-,B,\alpha_+,\delta_+}(x)=-I_{\beta_-,-\gamma_-,B,-\alpha_+,\delta_+}(-\Pi_{\beta_-,\gamma_-}(x)).$$
$I_{\beta_-,-\gamma_-,B,-\alpha_+,\delta_+}$ is the slow divergence integral of \eqref{linearPWS} where $\alpha_+,\gamma_+,\gamma_-$ are replaced with $-\alpha_+,-\gamma_+,-\gamma_-$.
Since $\Pi_{\beta_-,\gamma_-}'<0$, the above formula implies that $x>0$ is a zero of $I_{\beta_-,\gamma_-,B,\alpha_+,\delta_+}$ if and only if $-\Pi_{\beta_-,\gamma_-}(x)>0$ is a zero of $I_{\beta_-,-\gamma_-,B,-\alpha_+,\delta_+}$ (with the same multiplicity). We conclude that the case $\gamma_-<0$ follows from the case $\gamma_->0$.

First we prove the following theorem.
\begin{theorem}
    \label{thm-mainpart1}
    Consider the slow divergence integral $I(x)$, with $ x\in [0,b[$, defined in \eqref{SDI-linearPWS}. The following statements are true.
    \begin{enumerate}
        \item Suppose that $\beta_-= 0$ and $\alpha_++\gamma_-(B-\delta_+)\ne 0$. Then the interval $[0,b[$ is bounded and, if $\alpha_++\gamma_-(B-\delta_+)>0$ (resp. $\alpha_++\gamma_-(B-\delta_+)<0$), then $ I<0$ (resp. $ I>0$) on $]0,b[$. For any small $\theta>0$, we have $\cycl(\cup_{x\in[\theta,b-\theta]}\Gamma_x)=1$. The limit cycle is attracting (resp. repelling) if it exists.
        \item Suppose that $\alpha_+\beta_-\ne 0$ and $\gamma_-=0$. Then the interval $[0,b[$ is bounded and, if $\alpha_+>0$ (resp. $\alpha_+<0$), then $ I<0$ (resp. $ I>0$) on $]0,b[$. For any small $\theta>0$, $\cycl(\cup_{x\in[\theta,b-\theta]}\Gamma_x)=1$ and the limit cycle is attracting (resp. repelling) if it exists.   
    \end{enumerate}
\end{theorem}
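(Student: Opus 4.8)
The plan is to read everything off the factorization \eqref{equivfunctions}, namely $\tilde{I}'(x)=\Psi(x)\Delta(x)$ with $\Psi>0$ on $]0,b[$, together with the normalizations $\tilde{I}(0)=I(0)=0$ and the positivity of the constant prefactor in \eqref{SDI-linearPWS}, which make $I$ and $\tilde{I}$ share the same sign. It therefore suffices to prove that in each case $\Delta$ is of one constant, nonzero sign on $]0,b[$: integrating $\tilde{I}'$ from $0$ then transfers that sign to $\tilde{I}$, hence to $I$. Throughout I invoke the standing reduction $\gamma_-\ge 0$ made before the statement, so that $\alpha_++\gamma_-(B-\delta_+)$ (statement~1) and $\alpha_+$ (statement~2) are the relevant sign-carrying quantities, and I must additionally verify that $b<\infty$ and that $I$ has no interior zero so that Theorem~\ref{theorem-cyclicity-RHKK}.1 applies.

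For statement~1, setting $\beta_-=0$ in \eqref{Delta-equiv} annihilates both $\beta_-$-terms and leaves the constant $\Delta\equiv-(\alpha_++\gamma_-(B-\delta_+))$, nonzero by hypothesis. Hence $\tilde{I}'$, and so $I$, is negative on $]0,b[$ if $\alpha_++\gamma_-(B-\delta_+)>0$ and positive if $\alpha_++\gamma_-(B-\delta_+)<0$. For boundedness I split on $\gamma_-$: if $\gamma_->0$, then by the description in Section~\ref{subsection-halfmap} the domain of $\Pi$ is already the bounded interval $[0,\tfrac{1}{\gamma_-}[$, so $b\le \tfrac{1}{\gamma_-}<\infty$; if $\gamma_-=0$, the hypothesis forces $\alpha_+\ne 0$, one has $\Pi(x)=-x$, and requiring $X^{sl}$ positive on $[\Pi(x),x]=[-x,x]$ caps $b$ at $(B-\delta_+)/|\alpha_+|<\infty$ (with the zero $x^*$ of $X^{sl}$ landing outside $[0,b[$).

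For statement~2 the key step is the identity $\Pi(x)=-x$, valid because $\gamma_-=0$ makes $Z^-$ reversible under $(x,t)\mapsto(-x,-t)$; equivalently, the characterization \eqref{intcharact} with the even function $V(u)=\beta_-u^2+1$ gives $\beta_-x^2=\beta_-\Pi(x)^2$, whence $\Pi(x)=-x$ since $\Pi(x)<0<x$. Substituting $\Pi(x)=-x$ and $\gamma_-=0$ into \eqref{Delta-equiv} collapses it to $\Delta(x)=-\alpha_+(\beta_-x^2+1)=-\alpha_+V(x)$, and since $V>0$ on the domain (Section~\ref{subsection-halfmap}) the sign of $\Delta$ is the constant $-\sgn(\alpha_+)$; thus $I<0$ for $\alpha_+>0$ and $I>0$ for $\alpha_+<0$. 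Boundedness again splits by $\beta_-$: when $\beta_-<0$, $Z^-$ has a saddle and the domain of $\Pi$ is the bounded $[0,x_R[$, so $b\le x_R<\infty$; when $\beta_->0$ (a linear center, as $\gamma_-=0$ forces purely imaginary eigenvalues in \eqref{eigenvaluesZ-}), the domain of $\Pi$ is $[0,+\infty[$, but $\alpha_+\ne 0$ and the sliding restriction on $[-x,x]$ cap $b$ at $(B-\delta_+)/|\alpha_+|<\infty$.

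In both statements $I$ is then sign-definite and zero-free on $]0,b[$, so Theorem~\ref{theorem-cyclicity-RHKK}.1 gives $\cycl(\cup_{x\in[\theta,b-\theta]}\Gamma_x)=1$, with the limit cycle attracting when $I<0$ and repelling when $I>0$. The genuine content is small: given \eqref{SDI-derivative-linear}, observing that $\Delta$ degenerates either to a constant (statement~1) or to $-\alpha_+V$ (statement~2) is short once $\Pi(x)=-x$ is in hand. I expect the main obstacle to be the boundedness bookkeeping, i.e.\ correctly intersecting the domain of $\Pi$ (saddle versus center/parabola) with the sliding-admissibility region $\{X^{sl}>0\}$ to pin down $b$ and to confirm that $x^*$ never lies in the interior $]0,b[$.
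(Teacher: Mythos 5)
Your proposal is correct and follows essentially the same route as the paper: reduce to the sign of $\Delta$ via $\tilde I'=\Psi\Delta$ with $\tilde I(0)=0$, note that $\Delta$ collapses to the constant $-(\alpha_++\gamma_-(B-\delta_+))$ when $\beta_-=0$ and to $-\alpha_+V(x)$ when $\gamma_-=0$ (using $\Pi(x)=-x$), establish boundedness of $[0,b[$ by intersecting the domain of $\Pi$ with the region where $X^{sl}>0$, and conclude with Theorem~\ref{theorem-cyclicity-RHKK}.1. Your extra explicit computation of $b=(B-\delta_+)/|\alpha_+|$ and the alternative derivation of $\Pi(x)=-x$ from \eqref{intcharact} are both sound but do not change the argument.
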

\begin{proof}
  \textit{Statement 1.} Suppose that $\beta_-= 0$ and $\alpha_++\gamma_-(B-\delta_+)\ne 0$. If $\gamma_-=0$, then $\alpha_+\ne 0$ and the sliding vector field $X^{sl}$ has a simple zero at $x=x^*$ defined in \eqref{simple-zero}. This and the fact that for $\beta_-=\gamma_-=0$ the domain and image of $\Pi$ are respectively $[0,+\infty[$ and $]-\infty,0]$ (Section \ref{subsection-halfmap}) imply that the domain $[0,b[$ of $I$ is bounded. If $\gamma_->0$, then the domain of $\Pi$ is $[0,\frac{1}{\gamma_-}[$ (Section \ref{subsection-halfmap}), and $[0,b[\subset [0,\frac{1}{\gamma_-}[$ is bounded.

  Since $\beta_-= 0$, we have
  $$\Delta(x)=-\alpha_+-\gamma_-(B-\delta_+),$$
 with $\Delta$ defined in \eqref{equivfunctions}. Now, if $\alpha_++\gamma_-(B-\delta_+)>0$ (resp. $\alpha_++\gamma_-(B-\delta_+)<0$), then $\tilde{I}'(x),\Delta(x)<0$ (resp. $\tilde{I}'(x),\Delta(x)>0$) for all $x\in ]0,b[$. Since $\tilde{I}(0)=0$, we have that $\tilde{I}$ and $I$ are negative (resp. positive) on $]0,b[$. The rest of the statement follows directly from Theorem \ref{theorem-cyclicity-RHKK}.1.\\
 \\
 \textit{Statement 2.} Suppose that $\alpha_+\beta_-\ne 0$ and $\gamma_-=0$. If $\beta_-<0$, then $[0,b[$ is bounded because the domain of $\Pi$ is bounded (see the saddle case in Section \ref{subsection-halfmap}). If $\beta_->0$, then the domain and image of $\Pi$ are respectively $[0,+\infty[$ and $]-\infty,0]$ (see the center case in Section \ref{subsection-halfmap}). Since $x^*$ is well-defined ($\alpha_+\ne 0$), this implies that $[0,b[$ is bounded.

 Since $\gamma_-=0$, we have $\Pi(x)=-x $ and 
 $$\Delta(x)=-\alpha_+ V(x).$$
 Notice that $V(x)=\beta_-x^2+1$ when $\gamma_-=0$. The function $V$ is positive on the domain of $\Pi$. If $\alpha_+>0$ (resp. $\alpha_+<0$), then $\tilde{I}'(x)<0$ (resp. $\tilde{I}'(x)>0$) for all $x\in ]0,b[$. The rest of the proof is now analogous to the proof in Statement $1$.
\end{proof}
For more details about the domain $[0,b[$ of $I$ in Theorem \ref{thm-mainpart1} see Appendix \ref{subsection-centerappen} and Appendix \ref{nosingularities-subsection}.

It remains to study the case where $\beta_-\ne 0$ and $\gamma_->0$ (Section \ref{subsection-important}--Section \ref{focus-subsection}). Theorem \ref{theorem-saddle} in Section \ref{saddle-subsection} (the saddle case), Theorem \ref{thm-distinctnode} and Theorem \ref{thm-repeatedeigen} in Section \ref{node-subsection} (the node case) and Theorem \ref{thm-focuscase} in Section \ref{focus-subsection} (the focus case) imply that for $\beta_-\ne 0$ and $\gamma_->0$ $I$ has at most $1$ zero counting multiplicity in $]0,b[$. Moreover, there exist parameter values in the $(\beta_-,\gamma_-,B,\alpha_+,\delta_+)$-space, with $\alpha_+<0$, $\beta_-\ne 0$ and $\gamma_->0$, for which $I$ has a simple zero in $]0,b[$. 
\smallskip

The main idea in the proof of the above mentioned theorems is to show that, for $\beta_-\ne 0$ and $\gamma_->0$, there is at most $1$ intersection (multiplicity counted)
between the curve $y=\Pi(x)$ and the curve $\overline \Delta(x,y)=0$ in the fourth quadrant with $x\in]0,b[$ where
\begin{equation}
    \label{bar Delta-equiv}
    \overline \Delta(x,y)=\alpha_+\beta_-xy+\beta_-(B-\delta_+)(x+y)-\alpha_+-\gamma_-(B-\delta_+).
\end{equation}
Then this implies that $\Delta(x)=\overline \Delta(x,\Pi(x))$ (or, equivalently, $\tilde{I}'$) has at most $1$ zero counting multiplicity in $]0,b[$. Using
Rolle’s theorem and $\tilde{I}(0)=0$, we conclude that $\tilde{I}$ (or $I$) has at most $1$ zero counting multiplicity in $]0,b[$. For a similar idea see e.g. \cite{HuzakPrey}.
\smallskip

In Section \ref{subsection-important} we classify the curves defined by the equation $\overline \Delta(x,y)=0$ and find contact points between these curves and the orbits of system \eqref{3degreepolysys} defined in Section \ref{subsection-important}.

\subsection{Properties of the curves defined by $\overline \Delta=0$}\label{subsection-important}

Consider the function $\overline \Delta$ defined in \eqref{bar Delta-equiv}. In further details, notice first that  
\begin{align}
\overline \Delta(x,y)=\overline \Delta(y,x),\label{symmetryDelta}\\
    \nabla \overline \Delta(x,y) = \begin{bmatrix}
(1+\delta_+) \beta_- X^{sl}(y)\\
(1+\delta_+) \beta_- X^{sl}(x)
    \end{bmatrix},\label{gradDelta}\\
    \overline \Delta(x,x^*) = -\alpha_+V(x^*).\label{barDeltaprop}
\end{align}
Then, for $\beta_-\ne 0$ and $\gamma_->0$, we distinguish between the following $3$ cases.\\
\\
1. If $\alpha_+\beta_-\ne 0$, $\gamma_->0$ and $V(x^*)=0$, with $x^*$ defined in \eqref{simple-zero}, then by  \eqref{symmetryDelta} and \eqref{barDeltaprop}
\begin{equation}\label{unionlines}
    \overline \Delta(x,y)=\alpha_+\beta_-(x-x^*)(y-x^*),
    \end{equation}
and $\overline \Delta(x,y)=0$ therefore represents the union of two lines $x=x^*$ and $y=x^*$. In this case we will see that $y=\Pi(x)$ and $\overline \Delta(x,y)=0$ have no intersection
points. For further details, see Section \ref{saddle-subsection} and Section \ref{node-subsection}.\\
\\
2. If $\alpha_+\beta_-\ne 0$, $\gamma_->0$ and $V(x^*)\ne 0$, then $\overline \Delta(x,y)=0$ represents a hyperbola
\begin{equation}
\label{hyperbola}
y=hp(x):=
\frac{\alpha_++\gamma_-(B-\delta_+)-\beta_-(B-\delta_+)x}{\beta_-(1+\delta_+)X^{sl}(x)}.
\end{equation}
It follows from \eqref{symmetryDelta} that the function $hp$ is an involution. The graph of $y=hp(x)$ has a vertical asymptote $x=x^*$ and a horizontal asymptote $y=x^*$, and 
\begin{equation}\label{deriv-hyper}
hp(0)=\frac{\gamma_-x^*-1}{\beta_-x^*}, \  
\ hp'(x)=-\frac{\alpha_+^2V(x^*)}{\beta_-(1+\delta_+)^2X^{sl}(x)^2}.
\end{equation}
In this case we will prove that $y=\Pi(x)$ and $\overline \Delta(x,y)=0$ have at most $1$ intersection counting multiplicity in the fourth quadrant with $x\in ]0,b[$. Moreover, we show the existence of a transversal intersection for some parameter values satisfying the above condition. See Sections \ref{saddle-subsection}--\ref{focus-subsection}.\\ 
\\
3. If $\alpha_+=0$, $\beta_-\ne 0$ and $\gamma_-> 0$, then $\overline \Delta(x,y)=0$ represents a line
\begin{equation}
    \label{line-equation}
    y=\frac{\gamma_-}{\beta_-}-x.
\end{equation}
In this case we prove that $y=\Pi(x)$ and $\overline \Delta(x,y)=0$ have no intersection
points. See Sections \ref{saddle-subsection}--\ref{focus-subsection}.

It can be easily seen that in cases $1$--$3$ we have $\overline \Delta(x_L,x_R)=\overline \Delta(x_R,x_L)=0$ where $x_L$ and $x_R$ are defined in \eqref{intersect-pointsRL}.

\begin{lemma}
    \label{lemma-below-above}
    Suppose that $\beta_-\ne 0$ and $\gamma_->0$. Consider the function $\Delta(x)$ defined in \eqref{equivfunctions}, with $x\in ]0,b¨[$, and $\overline \Delta$ defined in  \eqref{bar Delta-equiv}. The following statements are true.
    \begin{enumerate}
    \item Suppose that $\beta_-<0$. If the curve $\overline \Delta(x,y)=0$ lies above (resp. below) the curve $y=\Pi(x)$ for all $x$ kept in an interval $J\subset]0,b[$, then $\Delta(x),\tilde I'(x)>0$ (resp. $\Delta(x),\tilde I'(x)<0$) for all $x\in J$.
    \item Suppose that $\beta_->0$. If the curve $\overline \Delta(x,y)=0$ lies above (resp. below) the curve $y=\Pi(x)$ for all $x$ kept in an interval $J\subset]0,b[$, then $\Delta(x),\tilde I'(x)<0$ (resp. $\Delta(x),\tilde I'(x)>0$) for all $x\in J$.
    \end{enumerate}
\end{lemma}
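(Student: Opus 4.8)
The plan is to reduce everything to the elementary observation that, for each fixed $x$, the map $y\mapsto\overline\Delta(x,y)$ is affine and that its slope has a definite sign determined by $\beta_-$. Since $\tilde I'(x)=\Psi(x)\Delta(x)$ with $\Psi(x)>0$ and $\Delta(x)=\overline\Delta(x,\Pi(x))$, it suffices to pin down the sign of $\overline\Delta(x,\Pi(x))$; the conclusion for $\tilde I'$ then follows immediately by positivity of $\Psi$.

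First I would record from the gradient formula \eqref{gradDelta} that
\[
\frac{\partial}{\partial y}\overline\Delta(x,y)=(1+\delta_+)\beta_- X^{sl}(x),
\]
which is independent of $y$; hence $y\mapsto\overline\Delta(x,y)$ is affine for every fixed $x$. On the domain $]0,b[$ of $I$ we have $X^{sl}(x)>0$ (by the very definition of $b$, the sliding field $X^{sl}$ is positive on $[\Pi(x),x]$, in particular at $x$), and $1+\delta_+>0$ since $\delta_+>0$. Consequently the slope is nonzero and its sign equals the sign of $\beta_-$. In particular, for each $x\in]0,b[$ the affine function $y\mapsto\overline\Delta(x,y)$ has a unique zero $y_0(x)$, so the curve $\overline\Delta(x,y)=0$ is a graph over $x$ in this range and the phrase ``lies above (resp.\ below) $y=\Pi(x)$'' is unambiguous: it means $y_0(x)>\Pi(x)$ (resp.\ $y_0(x)<\Pi(x)$).

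Next I would convert position into sign by monotonicity. If $\beta_->0$, then $y\mapsto\overline\Delta(x,y)$ is strictly increasing, so $\overline\Delta(x,\Pi(x))$ has the sign of $\Pi(x)-y_0(x)$: the curve lying above (resp.\ below) gives $\Delta(x)<0$ (resp.\ $\Delta(x)>0$). If $\beta_-<0$, the map is strictly decreasing and both signs are reversed, giving $\Delta(x)>0$ when the curve is above and $\Delta(x)<0$ when it is below. Applying this at each $x\in J$ and multiplying by $\Psi(x)>0$ yields the asserted signs of $\tilde I'$ on $J$, which is exactly statements~1 and~2.

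The argument is essentially bookkeeping once the slope sign is fixed, so I do not expect a serious obstacle; the only points requiring care are to justify $X^{sl}(x)>0$ throughout $J$ (so that the slope does not vanish and $y_0(x)$ is well defined) and to keep the two independent sign reversals straight, namely the one coming from $\beta_-<0$ and the one between ``above'' and ``below''. I would therefore state the fact $X^{sl}>0$ on $]0,b[$ explicitly as a consequence of the definition of the domain of $I$ before running the monotonicity comparison.
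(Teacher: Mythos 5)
Your argument is correct, and it is genuinely more streamlined than the one in the paper. You exploit the fact that for fixed $x$ the map $y\mapsto\overline\Delta(x,y)$ is affine with slope $(1+\delta_+)\beta_- X^{sl}(x)$ (read off from \eqref{gradDelta}), that $X^{sl}(x)>0$ on $]0,b[$ by the very definition of $b$, and hence that the slope is nonzero with the sign of $\beta_-$; monotonicity in $y$ then converts ``above/below'' directly into the sign of $\Delta(x)=\overline\Delta(x,\Pi(x))$, uniformly in all configurations. The paper instead splits into the three cases for the zero set of $\overline\Delta$ (the union of two lines when $V(x^*)=0$, the hyperbola $y=hp(x)$ when $V(x^*)\ne 0$, and the line \eqref{line-equation} when $\alpha_+=0$) and verifies the sign by explicit substitution in each; in the first case it even needs a further sub-argument showing $\alpha_+(x-x^*)>0$ on $J$, which is exactly your observation that $X^{sl}(x)=\frac{\alpha_+}{1+\delta_+}(x-x^*)>0$ on the domain of $I$, obtained there by hand rather than from the definition of $b$. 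Your single monotonicity argument subsumes all three cases and makes the two independent sign reversals (the sign of $\beta_-$ and the choice of above versus below) transparent; the paper's case analysis buys only a more concrete picture of each curve, which it needs anyway in the later sections but not for this lemma. The one point you rightly flag --- that $x^*\notin[0,b[$ so the zero set is a single-valued graph over $]0,b[$ and ``above/below'' is unambiguous --- is handled correctly.
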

\begin{proof} We will prove Statement $1$. Statement $2$ can be proved in the same fashion as Statement $1$. 

Let $\beta_-<0$.
 Suppose first that $\alpha_+\ne 0$ and $V(x^*)=0$. Then $\overline \Delta$ is given in \eqref{unionlines}. If the curve $\overline \Delta(x,y)=0$ (that is, $y=x^*$) lies above the curve $y=\Pi(x)$ with $x\in J\subset]0,b[$, then $x^*>\Pi(x)$ for all $x\in J$. Using \eqref{unionlines} we get
$$\Delta(x)=\overline \Delta(x,\Pi(x))=\alpha_+\beta_-(x-x^*)(\Pi(x)-x^*)>0, \ \forall x\in J.$$
We used $\beta_-<0$, $x^*>\Pi(x)$ for all $x\in J$ and $\alpha_+(x-x^*)>0$ for all $x\in J$. Let us prove that $\alpha_+(x-x^*)>0$ for all $x\in J$. If $\alpha_+>0$, then \eqref{simple-zero} implies that $x^*<0$. Since $x>0$ for all $x\in J$, it follows that $\alpha_+(x-x^*)>0$ for all $x\in J$. If $\alpha_+< 0$, then $x^*>0$. Using the definition of the domain $[0,b[$ of $I$ (Section \ref{section-mainresult}), it is clear that $b\le x^*$. Thus, $x<x^*$ for all $x\in J$. This implies that $\alpha_+(x-x^*)>0$ for all $x\in J$. The case where the curve $\overline \Delta(x,y)=0$ (that is, $y=x^*$) lies below the curve $y=\Pi(x)$ can be studied in a similar way. We get $\Delta(x)<0$
for all $x\in J$.

 Now, suppose that $\alpha_+\ne 0$ and $V(x^*)\ne 0$. If the curve $\overline \Delta(x,y)=0$ lies above the curve $y=\Pi(x)$ with $x\in J$, then $hp(x)>\Pi(x)$ for all $x\in J$, with $hp$ defined in \eqref{hyperbola}. If we substitute \eqref{hyperbola} in $hp(x)>\Pi(x)$ and if we use $\beta_-<0$, then we get $\Delta(x)>0$ for all $x\in J$. The case where the curve $\overline \Delta(x,y)=0$ lies below the curve $y=\Pi(x)$ can be studied in a similar way. We obtain $\Delta(x)<0$
for all $x\in J$.

 Finally, suppose that $\alpha_+=0$. If the curve $\overline \Delta(x,y)=0$ lies above the curve $y=\Pi(x)$ with $x\in J$, then $\frac{\gamma_-}{\beta_-}-x>\Pi(x)$ for all $x\in J$. We use \eqref{line-equation}. Since $\beta_-<0$, we have $\Delta(x)>0$ for all $x\in J$. The case where the curve $\overline \Delta(x,y)=0$ lies below the curve $y=\Pi(x)$ can be studied in a similar way. We have $\Delta(x)<0$
for all $x\in J$.
\end{proof}

\smallskip

Notice that $y=\Pi(x)$ is the $x\ge 0$-subset of the stable manifold of
the hyperbolic saddle point $(x,y)=(0,0)$ of the following polynomial system of degree $3$
\begin{equation}\label{3degreepolysys}
\begin{aligned}
 \dot x &=yV(x),\\
 \dot y &=xV(y).
\end{aligned}
 \end{equation}
This can be easily seen from \eqref{Pi-derivative} (see also \cite[Remark 16]{Carmona}). It is clear that system \eqref{3degreepolysys} is invariant under the symmetry $(x,y)\to (y,x)$. 
It is important (Sections \ref{saddle-subsection}--\ref{focus-subsection}) to calculate the number of contact points between the orbits of system \eqref{3degreepolysys} and the curve $\overline \Delta(x,y)=0$. The contact points are solutions of  
\begin{equation}\label{system-contactpoints}
\begin{aligned}
 \nabla\overline \Delta(x,y)\cdot\left(yV(x),xV(y)\right)  &=0,\\
 \overline \Delta(x,y)&=0.
\end{aligned}
 \end{equation}
Using \eqref{gradDelta} the first equation in \eqref{system-contactpoints} becomes 
\begin{equation}
    \label{contact-gradient}
    (1+\delta_+)\beta_-\left(xV(y)X^{sl}(x)+yV(x)X^{sl}(y)\right)=0
\end{equation}
Recall that $X^{sl}(x^*)=0$ for $\alpha_+\ne 0$. Therefore if $\alpha_+\beta_-\ne 0$, $\gamma_->0$ and $V(x^*)=0$, then \eqref{contact-gradient} implies that all points on the lines $x=x^*$ and $y=x^*$ are the contact points.
On the other hand, if $\alpha_+\beta_-\ne 0$, $\gamma_->0$ and $V(x^*)\ne 0$ and if we substitute \eqref{hyperbola} in \eqref{contact-gradient}, we get the following equation for  contact points
\begin{equation}
    \label{contactpointsCase1}
   V(x)\left(\alpha_++\gamma_-(B-\delta_+)+\alpha_+\beta_-x^2\right)=0. 
\end{equation}
Using \eqref{contactpointsCase1} the contact points are:
$(x,y)=(x_L,x_R)$, $(x,y)=(x_R,x_L)$ (if $\gamma_-^2-4\beta_-\ge 0$), and $(x,y)=(x_C,-x_C)$ and $(x,y)=(-x_C,x_C)$ 
where 
\begin{equation}
\label{contact-important}
x_C=\sqrt{\frac{\gamma_-x^*-1}{\beta_-}},
\end{equation}
if $\frac{\gamma_-x^*-1}{\beta_-}\ge 0$. Let us recall that $x_L,x_R$ are defined in \eqref{intersect-pointsRL} and $x^*$ is defined in \eqref{simple-zero}.

When $\beta_-\ne 0$, $\gamma_->0$ and $\alpha_+=0$, then we substitute \eqref{line-equation} in \eqref{contact-gradient} and get the following equation for contact points:
\begin{equation}
    \label{contactpointsCase2}
   V(x)=0. 
\end{equation}

\subsection{The saddle case}\label{saddle-subsection}
In this section we suppose that $\beta_-<0$. Then $Z^-$ has a hyperbolic saddle at $(x,y)=(\frac{\gamma_-}{\beta_-},\frac{1}{\beta_-})$ with eigenvalues $\varkappa_-<0$ and $\varkappa_+>0$ given in \eqref{eigenvaluesZ-}. From \eqref{eigenlinene0} it follows that the stable manifold of the hyperbolic saddle is given by $x=\varkappa_+y+x_R$ and the unstable manifold is given by $x=\varkappa_-y+x_L$
where $x_L<0$ and $x_R>0$ are defined in \eqref{intersect-pointsRL}.
It is clear that the stable (resp. unstable) manifold intersects the $x$-axis at $x=x_R$ (resp. $x=x_L$). We refer to Fig. \ref{fig:sedlo}.
\begin{figure}[htb]
	\begin{center}
		\includegraphics[width=11.8cm,height=6.5cm]{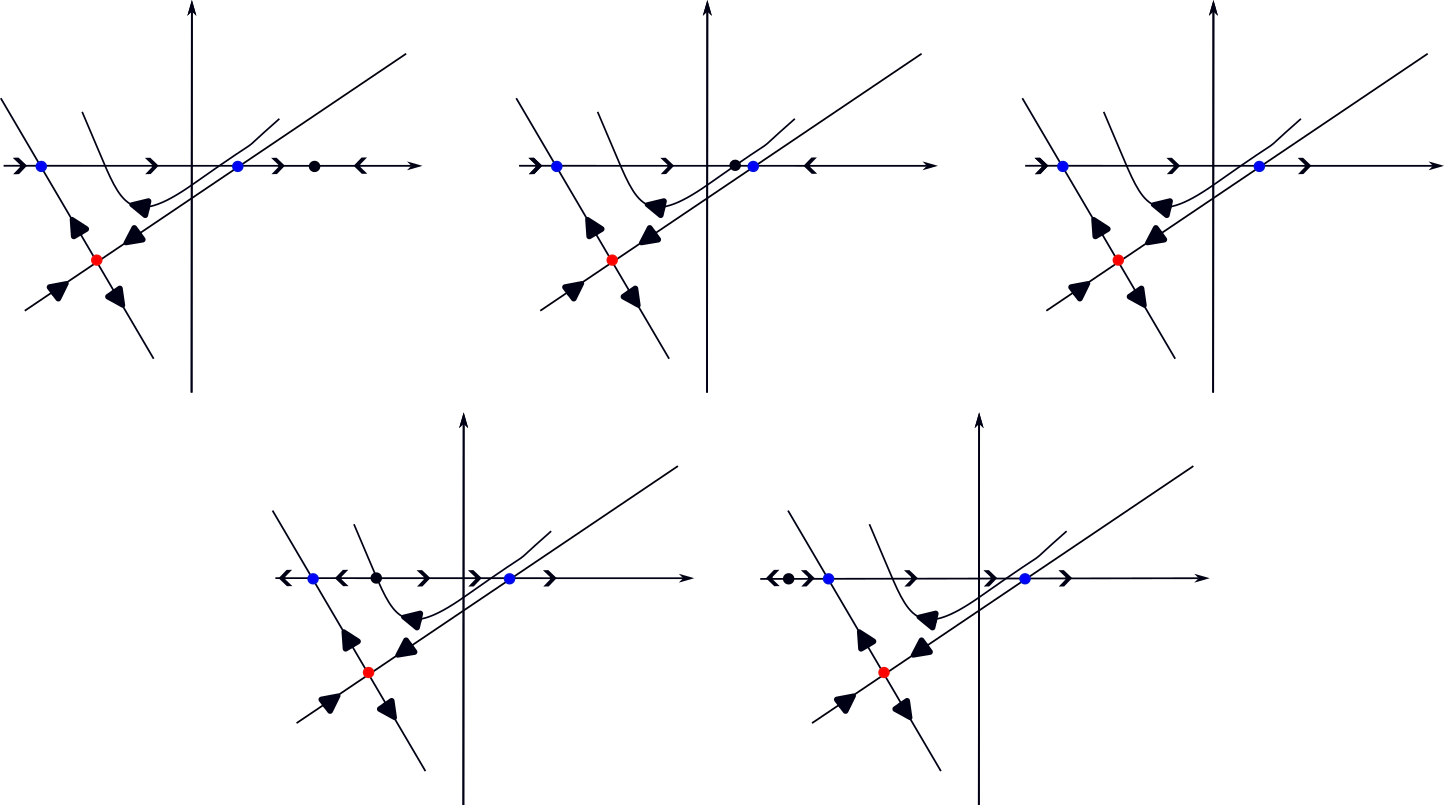}
		{\footnotesize
  \put(-283,140){\footnotesize{$x_R$}}
  \put(-329,140){\footnotesize{$x_L$}}
  \put(-266,139){\footnotesize{$x^*$}}
  \put(-162,140){\footnotesize{$x_R$}}
  \put(-211,140){\footnotesize{$x_L$}}
  \put(-171,139){\footnotesize{$x^*$}}
  \put(-47,140){\footnotesize{$x_R$}}
  \put(-95,140){\footnotesize{$x_L$}}
\put(-220,45){\footnotesize{$x_R$}}
  \put(-268,45){\footnotesize{$x_L$}}
  \put(-251,45){\footnotesize{$x^*$}}
\put(-101,45){\footnotesize{$x_R$}}
  \put(-146,45){\footnotesize{$x_L$}}
  \put(-156,45){\footnotesize{$x^*$}}
   \put(-311,85){\footnotesize{(a) $x_R\le x^*$}}
   \put(-200,85){\footnotesize{(b) $0<x^*< x_R$}}
   \put(-77,85){\footnotesize{(c) $\alpha_+=0$}}
   \put(-260,-8){\footnotesize{(d) $x_L<x^*<0$}}
   \put(-131,-8){\footnotesize{(e) $x^*\le x_L$}}
  
}
         \end{center}
	\caption{Phase portraits of $Z^-$, with $\beta_-<0$ and $\gamma_->0$, defined in \eqref{linearPWS} and the direction of the sliding vector field \eqref{sliding-linear} along $y=0$. $Z^-$ has a hyperbolic saddle. We do not draw the corresponding phase portraits of $Z^+$.}
	\label{fig:sedlo}
\end{figure}
  
  From Fig. \ref{fig:sedlo} it follows that the domain and image of $\Pi$ are respectively $[0,x_R[$ and $]x_L,0]$ (see also \cite{Carmona}). The domain of the slow divergence integral $I$ (or $\tilde I$) depends on the location of the singularity $x=x^*$ of the sliding vector field. We distinguish between $5$ cases.
\begin{enumerate}
    \item[(a)] If $\alpha_+<0$ (hence $x^*>0$) and $x_R\le x^*$, then the domain of $I$ is $[0,x_R[$ and we consider the canard cycle $\Gamma_x$ for all $x\in ]0,x_R[$ (see Fig. \ref{fig:sedlo}(a)).
    \item[(b)] If $\alpha_+<0$  and $x^*< x_R$, then the domain of $I$ is $[0,x^*[$ and we consider the canard cycle $\Gamma_x$ for all $x\in ]0,x^*[$ (see Fig. \ref{fig:sedlo}(b)).
\item[(c)] If $\alpha_+=0$, then we have the same domain of $I$ as in the case (a) (see Fig. \ref{fig:sedlo}(c)).
 \item[(d)] If $\alpha_+>0$ (hence $x^*<0$) and $x_L<x^*$, then the domain of $I$ is $[0,\Pi^{-1}(x^*)[$ and we consider the canard cycle $\Gamma_x$ for all $x\in ]0,\Pi^{-1}(x^*)[$ (see Fig. \ref{fig:sedlo}(d)).
 \item[(e)] If $\alpha_+>0$ (hence $x^*<0$) and $x^*\le x_L$, then we deal with the same domain of $I$ as in the case (a) (see Fig. \ref{fig:sedlo}(e)).
\end{enumerate}

Besides the hyperbolic saddle at the origin, the system \eqref{3degreepolysys} has hyperbolic and attracting nodes at $(x,y)=(x_R,x_R)$ and $(x,y)=(x_L,x_L)$, and hyperbolic and repelling nodes at $(x,y)=(x_R,x_L)$ and $(x,y)=(x_L,x_R)$. Notice that the lines $x=x_R$, $x=x_L$, $y=x_R$ and $y=x_L$ are invariant (Fig. \ref{fig:contactsaddle}). Let us focus on the singularity $(x,y)=(x_R,x_L)$ in the fourth quadrant. Since $\gamma_->0$, it is easy to see that the straight-line solution corresponding to the weaker eigenvalue of $(x,y)=(x_R,x_L)$ is $x=x_R$, and the regular orbit of \eqref{3degreepolysys} given by $y=\Pi(x)$ tends to $(x,y)=(x_R,x_L)$ tangentially to the straight-line $x=x_R$ (in backward time).

\begin{figure}[htb]
	\begin{center}
		\includegraphics[width=12.5cm,height=9.5cm]{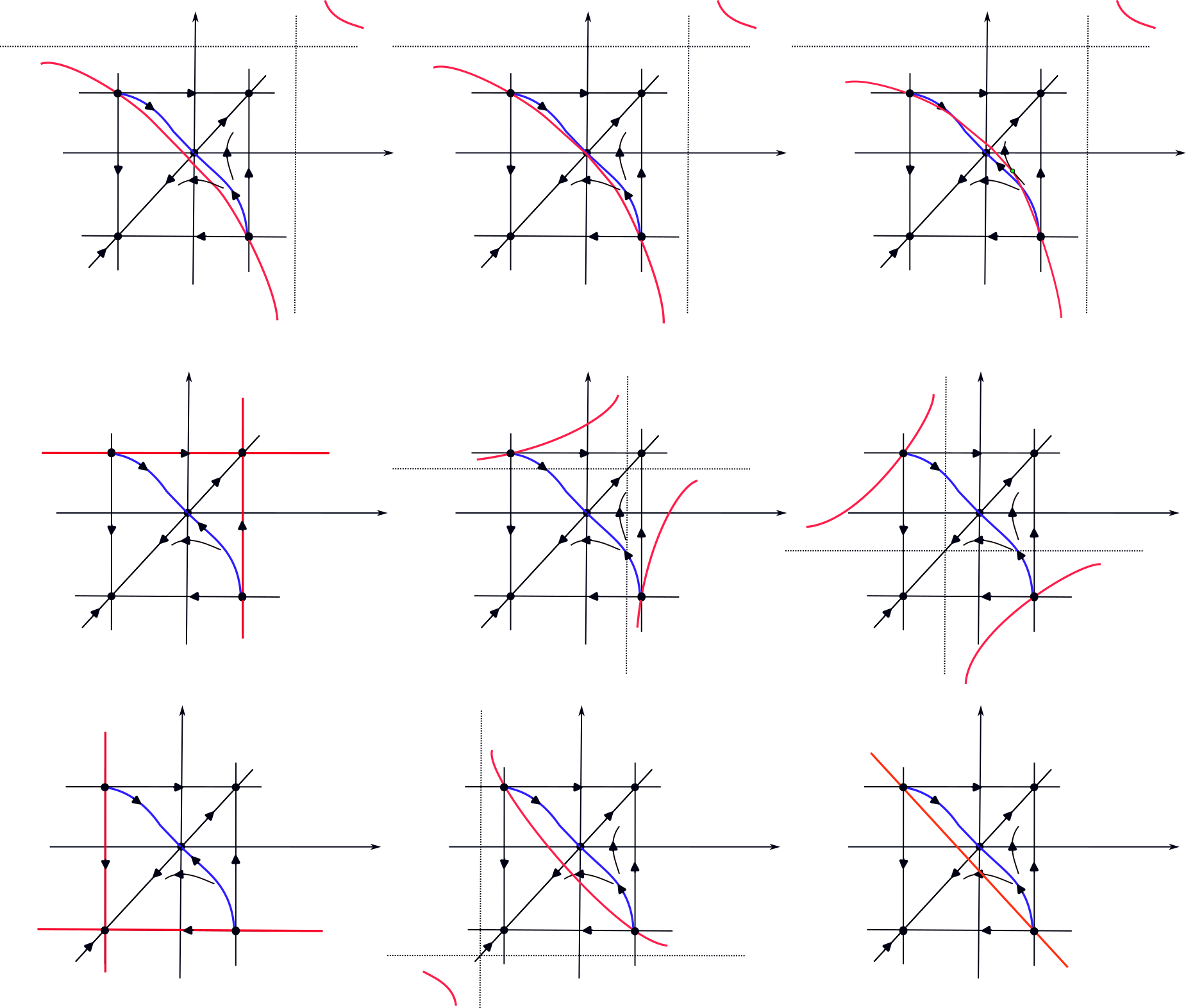}
		{\footnotesize
   \put(-280,224){\footnotesize{$x_R$}}
   \put(-332,224){\footnotesize{$x_L$}}
   \put(-320,176){\footnotesize{(a) $\frac{1}{\gamma_-}<x^*$}}
   \put(-161,224){\footnotesize{$x_R$}}
   \put(-215,224){\footnotesize{$x_L$}}
   \put(-203,176){\footnotesize{(b) $x^*=\frac{1}{\gamma_-}$}}
   \put(-42,224){\footnotesize{$x_R$}}
   \put(-94,224){\footnotesize{$x_L$}}
    \put(-100,176){\footnotesize{(c) $x_R<x^*<\frac{1}{\gamma_-}$}}
    \put(-281,127){\footnotesize{$x_R$}}
   \put(-333,127){\footnotesize{$x_L$}}
    \put(-320,84){\footnotesize{(d) $x^*=x_R$}}
   \put(-161,127){\footnotesize{$x_R$}}
   \put(-215,127){\footnotesize{$x_L$}}
    \put(-210,84){\footnotesize{(e) $0<x^*<x_R$}}
   \put(-42,127){\footnotesize{$x_R$}}
   \put(-96,127){\footnotesize{$x_L$}}
    \put(-100,82){\footnotesize{(f) $x_L<x^*<0$}}
   \put(-283,37){\footnotesize{$x_R$}}
   \put(-335,37){\footnotesize{$x_L$}}
    \put(-320,-2){\footnotesize{(g) $x^*=x_L$}}
   \put(-163,37){\footnotesize{$x_R$}}
   \put(-203,37){\footnotesize{$x_L$}}
   \put(-200,-2){\footnotesize{(h) $x^*<x_L$}}
    \put(-43,37){\footnotesize{$x_R$}}
   \put(-96,37){\footnotesize{$x_L$}}
   \put(-85,-2){\footnotesize{(i) $\alpha_+=0$}}
}
         \end{center}
	\caption{The phase portrait of \eqref{3degreepolysys} for $\beta_-<0$ and $\gamma_->0$, with the curve $\overline \Delta(x,y)=0$ (red). The part of the blue curve located in the fourth quadrant is the graph of $\Pi$. We draw the vertical and horizontal lines $x=x^*$ and $y=x^*$ using dashed lines. $(x,y)=(x_C,-x_C)$ and $(x,y)=(-x_C,x_C)$ are the intersection points between the red curve and $y=-x$. We indicate the contact point $(x,y)=(x_C,-x_C)$ when $x_C$ is positive and contained in the domain of the slow divergence integral $I$ (Fig \ref{fig:contactsaddle}(c)).}
	\label{fig:contactsaddle}
\end{figure}
A detailed statement of Theorem \ref{theorem-saddle} below covers all possible mutual positions of the curve $y=\Pi(x)$ and the curve $\overline \Delta(x,y)=0$ (see Fig. \ref{fig:contactsaddle}).
\begin{theorem}\label{theorem-saddle}
Suppose that $\beta_-<0$ and $\gamma_->0$. Then $x_R<\frac{1}{\gamma_-}$ and the following statements are true.
\begin{enumerate}
    \item{($\alpha_+<0$)} If $\frac{1}{\gamma_-}<x^*$ (Fig. \ref{fig:contactsaddle}(a)), then we have $ I<0$ on $]0,x_R[$ and, for any small $\theta>0$, $\cycl(\cup_{x\in[\theta,x_R-\theta]}\Gamma_x)=1$. The limit cycle is attracting.
   \item{($\alpha_+<0$)} If $x^*=\frac{1}{\gamma_-}$ (Fig. \ref{fig:contactsaddle}(b)), then we have $I<0$ on $]0,x_R[$ and, for any small $\theta>0$, $\cycl(\cup_{x\in[\theta,x_R-\theta]}\Gamma_x)=1$. The limit cycle is attracting. 
    \item{($\alpha_+<0$)} If $x_R<x^*<\frac{1}{\gamma_-}$ (Fig. \ref{fig:contactsaddle}(c)), then the function $I$ has at most $1$ zero (counting multiplicity) on $]0,x_R[$ and, for any small $\theta>0$, $\cycl(\cup_{x\in[\theta,x_R-\theta]}\Gamma_x)\le 2$. There exists $x^*$ (sufficiently close to $\frac{1}{\gamma_-}$) such that $I$ has a simple zero in $]0,x_R[$ and then, for any sufficiently small $\theta>0$, $\cycl(\cup_{x\in[\theta,x_R-\theta]}\Gamma_x)=2$.  
    \item{($\alpha_+<0$)} If $x^*=x_R$ (Fig. \ref{fig:contactsaddle}(d)), then $I>0$ on $]0,x_R[$ and, for any small $\theta>0$, $\cycl(\cup_{x\in[\theta,x_R-\theta]}\Gamma_x)=1$ (the limit cycle is repelling).
     \item{($\alpha_+<0$)} If $0<x^*<x_R$ (Fig. \ref{fig:contactsaddle}(e)), then we have $I>0$ on $]0,x^*[$ and, for any small $\theta>0$, $\cycl(\cup_{x\in[\theta,x^*-\theta]}\Gamma_x)=1$ (the limit cycle is repelling).
      \item{($\alpha_+>0$)} If $x_L<x^*<0$ (Fig. \ref{fig:contactsaddle}(f)), then we have $I<0$ on $]0,\Pi^{-1}(x^*)[$ and, for any small $\theta>0$, $\cycl(\cup_{x\in[\theta,\Pi^{-1}(x^*)-\theta]}\Gamma_x)=1$ (the limit cycle is attracting).
      \item{($\alpha_+>0$)} If $x^*=x_L$ (Fig. \ref{fig:contactsaddle}(g)), we have $ I<0$ on $]0,x_R[$ and, for any small $\theta>0$,  $\cycl(\cup_{x\in[\theta,x_R-\theta]}\Gamma_x)=1$ (the limit cycle is attracting).
        \item{($\alpha_+>0$)} If $x^*<x_L$ (Fig. \ref{fig:contactsaddle}(h)), then we have $ I<0$ on $]0,x_R[$ and, for any small $\theta>0$, $\cycl(\cup_{x\in[\theta,x_R-\theta]}\Gamma_x)=1$ (the limit cycle is attracting).
        \item If $\alpha_+=0$ (Fig. \ref{fig:contactsaddle}(i)), we have $I<0$ on $]0,x_R[$ and, for any small $\theta>0$, $\cycl(\cup_{x\in[\theta,x_R-\theta]}\Gamma_x)=1$ (the limit cycle is attracting).
     
\end{enumerate}
\end{theorem}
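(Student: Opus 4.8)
The plan is to follow the blueprint set out before Lemma~\ref{lemma-below-above}: for each of the nine configurations I reduce the counting of zeros of $I$ to the counting of intersections (with multiplicity) of the orbit $y=\Pi(x)$ with the curve $\overline\Delta=0$ in the fourth quadrant, restricted to the relevant domain of $I$. Once at most one such intersection is established, $\Delta(x)=\overline\Delta(x,\Pi(x))$ has at most one zero, so by \eqref{equivfunctions} the same holds for $\tilde I'$; since $\tilde I(0)=0$, Rolle's theorem then forces at most one zero of $\tilde I$ (hence of $I$) counting multiplicity, and the cyclicity assertions follow verbatim from Theorem~\ref{theorem-cyclicity-RHKK}. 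As a preliminary I record $x_R<\tfrac1{\gamma_-}$, which is immediate from $V(\tfrac1{\gamma_-})=\beta_-/\gamma_-^2<0$ together with $\beta_-<0$, $x_L<0<x_R$ and the fact that $V$ is a downward parabola.

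The geometric input is the phase portrait of \eqref{3degreepolysys} described in Section~\ref{saddle-subsection}: the saddle at the origin whose stable manifold is $y=\Pi(x)$, the repelling node $(x_R,x_L)$ in the fourth quadrant to which $\Pi$ is asymptotic in backward time tangentially to the invariant line $x=x_R$, and the invariant lines $x=x_L,x_R$ and $y=x_L,x_R$. I then split according to the classification of $\overline\Delta=0$ from Section~\ref{subsection-important}: cases (d) and (g) are the degenerate case $V(x^*)=0$, where $\overline\Delta=0$ is the pair of lines $x=x^*$, $y=x^*$; case (i) is $\alpha_+=0$, where $\overline\Delta=0$ is the line $y=\tfrac{\gamma_-}{\beta_-}-x$; and the remaining cases (a), (b), (c), (e), (f), (h) give the hyperbola \eqref{hyperbola}, whose monotonicity is governed by $\operatorname{sgn} V(x^*)$ through \eqref{deriv-hyper}. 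By \eqref{contactpointsCase1}--\eqref{contactpointsCase2} the contact points of the orbits of \eqref{3degreepolysys} with $\overline\Delta=0$ in the fourth quadrant reduce to the boundary node $(x_R,x_L)$ and the interior point $(x_C,-x_C)$ of \eqref{contact-important}, and a short computation shows that $(x_C,-x_C)$ lies in the \emph{open} domain of $I$ precisely in configuration (c): in (a) it is not real, in (b) it degenerates to the origin, and in (e), (f), (h) one has $x_C$ outside the domain.

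For the degenerate cases (d), (g), (i) and for the hyperbola cases (a), (b), (e), (f), (h), where the open domain carries no interior contact point, I expect the argument to be short. A direct inspection using the relevant invariant/asymptotic lines (for instance, in (e) the vertical asymptote $x=x^*$ keeps the branch above $\Pi$ on all of $]0,x^*[$) shows that $\overline\Delta=0$ stays entirely above or entirely below $y=\Pi(x)$, so there is \emph{no} interior intersection. The side, and hence the sign of $I$, is read off at a convenient endpoint; for the hyperbola cases the cleanest test is the sign of $hp(0)=\tfrac{\gamma_-x^*-1}{\beta_-x^*}$ against $\Pi(0)=0$, whose change as $x^*$ crosses $\tfrac1{\gamma_-}$ matches the sign reversal of $I$ between (a)--(b) and the later cases. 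In each case Lemma~\ref{lemma-below-above}.1 converts position into $\operatorname{sgn}\Delta$, $\tilde I(0)=0$ gives $\operatorname{sgn} I$, and Theorem~\ref{theorem-cyclicity-RHKK}.1 gives cyclicity $1$ with the stated stability.

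The genuinely delicate case is (c), where $x_R<x^*<\tfrac1{\gamma_-}$ and the interior contact point $(x_C,-x_C)$ enters the domain; this is exactly the mechanism permitting a zero of $I$. The hard part is to prove that $\Pi$ meets the (decreasing, since $V(x^*)<0$ and $\beta_-<0$) hyperbola branch \emph{at most once}. I would argue this by combining the monotonicity of both curves with the backward tangency of $\Pi$ to $x=x_R$ at the node and the fact that $(x_C,-x_C)$ is the only interior tangency of the foliation with $\overline\Delta=0$: a second transversal crossing would, via the consistent crossing direction on each contact-free sub-arc and a Poincar\'e--Bendixson argument in the region enclosed by the two curves (which contains no singularity of \eqref{3degreepolysys}), force an extra contact point, contradicting \eqref{contactpointsCase1}; this yields cyclicity $\le 2$. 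For the realization of a \emph{simple} zero I let $x^*\nearrow\tfrac1{\gamma_-}$: there $hp(0)>0$ gives $\Delta>0$, hence $I>0$, just to the right of $0$, while the node position gives $\Delta<0$, hence $\tilde I$ decreasing, near $x_R^-$; as the initial positive arc shrinks to nothing at $x^*=\tfrac1{\gamma_-}$ (where case (b) gives $I<0$), $\tilde I$ must return below zero, producing a transversal, hence simple, sign change, and Theorem~\ref{theorem-cyclicity-RHKK}.3 upgrades this to cyclicity $2$. I expect the at-most-one-crossing step in (c) to be the principal obstacle; the other eight cases are variations on the ``entirely above/below'' dichotomy.
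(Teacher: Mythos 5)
Your proposal is correct and follows essentially the same route as the paper: the reduction to intersections of $y=\Pi(x)$ with $\overline\Delta=0$ via Lemma \ref{lemma-below-above} and Rolle, the case split by the type of $\overline\Delta=0$ (pair of lines, line, hyperbola), the localization of the interior contact point $(x_C,-x_C)$ via Lemma \ref{lemma-saddle} to single out configuration (c), the contact-point contradiction for the at-most-one-crossing step, and the continuity-in-$x^*$ plus Intermediate Value Theorem argument (deforming from case (b)) for realizing the simple zero. The only difference is cosmetic: you make explicit the Poincar\'e--Bendixson-type justification that two transversal crossings of the orbit $\Pi$ with a contact-free arc of $\overline\Delta=0$ are impossible, a step the paper leaves implicit.
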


\begin{proof} Suppose that $\beta_-<0$ and $\gamma_->0$. Using \eqref{intersect-pointsRL} we know that
$$x_L=\frac{2}{\gamma_--\sqrt{\gamma_-^2-4 \beta_-}}<0, \ \ x_R=\frac{2}{\gamma_-+\sqrt{\gamma_-^2-4 \beta_-}}>0,$$
$V(x_L)=V(x_R)=0$ where $V(x)=\beta_-x^2-\gamma_-x+1$. The graph of $V$ is concave down because $\beta_-<0$.
Using the expression for $x_R$ it can be easily seen that $x_R<\frac{1}{\gamma_-}$. Notice that $x_C\ge 0$ in \eqref{contact-important} is well-defined if $x^*\le\frac{1}{\gamma_-}$. We have
\begin{lemma}\label{lemma-saddle}
Suppose that $\beta_-<0$, $\alpha_+\ne 0$, $\gamma_->0$ and $V(x^*)\ne 0$ (i.e. $x^*\ne x_L,x_R$). Then the following statements are true.
\begin{enumerate}
\item If $x^*= \frac{1}{\gamma_-}$, then $x_C=0$.
    \item If $x_R<x^*< \frac{1}{\gamma_-}$, then $0< x_C<x_R$.
    \item If $x_L<x^*<x_R$, then $x_R<x_C<-x_L$.
    \item If $x^*<x_L$, then $-x_L<x_C<-x^*$.
\end{enumerate}
\end{lemma}
\begin{proof}[Proof of Lemma \ref{lemma-saddle}]
This follows from elementary calculus using the above expressions for $x_{L,R}$ and 
$ x_C=\sqrt{\frac{\gamma_-x^*-1}{\beta_-}}$. 
\end{proof}

The expressions for $hp(0)$ and $hp'(x)$ are given in \eqref{deriv-hyper}. \\
\textit{Proof of statement 1 of Theorem \ref{theorem-saddle}.} Suppose that $\frac{1}{\gamma_-}<x^*$. Then $hp(0)<0$. Since $x_R<\frac{1}{\gamma_-}$, we have $x_R<x^*$ and $V(x^*)<0$. This, together with \eqref{deriv-hyper}, implies that $hp'(x)<0$ for all $x\ne x^*$. The graph of $hp$ is given in Fig. \ref{fig:contactsaddle}(a) (see the red curve). Since $\frac{1}{\gamma_-}<x^*$, the contact points between the orbits of system \eqref{3degreepolysys} and $y=hp(x)$ are $(x,y)=(x_L,x_R)$ and $(x,y)=(x_R,x_L)$. See the paragraph after \eqref{contactpointsCase1}. 

Since $x_R<x^*$, the domain of $I$ is $[0,x_R[$ (see Fig. \ref{fig:sedlo}(a)). We show that $ I<0$ on $]0,x_R[$. Since $I(0)=0$, it suffices to prove that $ I'<0$ (equivalently, $\tilde I'<0$ or $\Delta<0$) on $]0,x_R[$ (see \eqref{equivfunctions}). We prove that the graph of $hp$ is located below the graph of $\Pi$ for $x\in ]0,x_R[$. Then Lemma \ref{lemma-below-above}.1 will imply that $ I'<0$ on $]0,x_R[$. 

Using $hp(0)<0$ and the paragraph before Theorem \ref{theorem-saddle}, it is clear that the graph of $hp$ lies below the graph of $\Pi$ for $x>0$ and $x\sim 0$ and for $x<x_R$ and $x\sim x_R$. If we assume that there exists an intersection point between the graph of $hp$ and the graph of $\Pi$ for $x\in ]0,x_R[$, then there is a contact point between the orbits of system \eqref{3degreepolysys} and $y=hp(x)$ because \eqref{3degreepolysys} has a saddle at $(x,y)=(0,0)$. The $x-$component of the contact point is contained in $]0,x_R[$. This is in direct contradiction with the fact that $(x,y)=(x_L,x_R)$ and $(x,y)=(x_R,x_L)$ are the only possible contact points. Thus, the graph of $hp$ lies below the graph of $\Pi$ for $x\in ]0,x_R[$. From Theorem \ref{theorem-cyclicity-RHKK}.1 it follows that for any small $\theta>0$, $\cycl(\cup_{x\in[\theta,x_R-\theta]}\Gamma_x)=1$  (the limit cycle is attracting because $I$ is negative).  \\
\\
\textit{Statement 2.} Suppose that $x^*=\frac{1}{\gamma_-}$. Then \eqref{deriv-hyper} implies that $hp(0)=0$. Since $x_R<\frac{1}{\gamma_-}=x^*$, we have $hp'(x)<0$ for all $x\ne x^*$ and the domain of $I$ is $[0,x_R[$ (see the proof of Statement $1$). The graph of $hp$ is given in Fig. \ref{fig:contactsaddle}(b) (the red curve). From Lemma \ref{lemma-saddle}.1 it follows that the contact points between the orbits of system \eqref{3degreepolysys} and $y=hp(x)$ are $(x,y)=(x_L,x_R)$, $(x,y)=(x_R,x_L)$ and $(x,y)=(0,0)$. 

We prove that the graph of $hp$ lies below the graph of $\Pi$ for $x\in ]0,x_R[$. This will imply that $ I<0$ on $]0,x_R[$ (see the proof of Statement $1$). Clearly, the graph of $hp$ lies below the graph of $\Pi$  for $x<x_R$ and $x\sim x_R$. If there is an intersection point between the graph of $hp$ and the graph of $\Pi$ for $x\in ]0,x_R[$, then we have an extra contact point between the orbits of system \eqref{3degreepolysys} and $y=hp(x)$, with the $x-$component contained in $]0,x_R[$. This contact point is different from $(x,y)=(x_L,x_R)$, $(x,y)=(x_R,x_L)$ and $(x,y)=(0,0)$. This gives a contradiction and implies that the graph of $hp$ lies below the graph of $\Pi$ for $x\in ]0,x_R[$. The rest of the statement follows directly from Theorem \ref{theorem-cyclicity-RHKK}.1.    \\
\\
\textit{Statement 3.} Assume that $x_R<x^*<\frac{1}{\gamma_-}$. From \eqref{deriv-hyper} it follows that $hp(0)>0$. Since $x_R<x^*$, we have $hp'(x)<0$ for all $x\ne x^*$ and the domain of $I$ is $[0,x_R[$ (see again the proof of Statement $1$). The graph of $hp$ is given in Fig. \ref{fig:contactsaddle}(c). Lemma \ref{lemma-saddle}.2 implies that the contact points between the orbits of system \eqref{3degreepolysys} and $y=hp(x)$ are $(x,y)=(x_L,x_R)$, $(x,y)=(x_R,x_L)$, $(x,y)=(x_C,-x_C)$ and $(x,y)=(-x_C,x_C)$, with $0<x_C<x_R$. 

First we prove that there is precisely $1$ intersection (counting multiplicity) between the graph of $hp$ and the graph of $\Pi$ for $x\in ]0,x_R[$. This will imply that $I'$ has $1$ zero (counting multiplicity) on $]0,x_R[$. Using Rolle's theorem and $I(0)=0$ we find at most $1$ zero (counting multiplicity) of $I$ on $]0,x_R[$. Then, from Theorem \ref{theorem-cyclicity-RHKK}.2 it follows that for any small $\theta>0$ the set $\cup_{x\in[\theta,x_R-\theta]}\Gamma_x$ can produce at most $2$ limit cycles. The graph of $hp$ lies below the graph of $\Pi$  for $x<x_R$ and $x\sim x_R$ (see the proof of Statement $1$) and, since $hp(0)>0$, the graph of $hp$ lies above the graph of $\Pi$  for $x>0$ and $x\sim 0$. Thus, there exists at least $1$ intersection between the graph of $hp$ and the graph of $\Pi$ for $x\in ]0,x_R[$ (The Intermediate-Value Theorem). If we assume that we have at least $2$ intersections (counting multiplicity), then, besides $(x,y)=(x_C,-x_C)$, we find at least $1$ extra contact point with the $x-$component contained in $]0,x_R[$. This gives a contradiction. Thus, there exists precisely $1$ intersection (counting multiplicity).

Let us prove that $I$ has a (simple) zero in $]0,x_R[$ if $x_R<x^*<\frac{1}{\gamma_-}$ and if $x^*$ is close enough to $\frac{1}{\gamma_-}$. Statement $2$ implies the existence of $x_0\in ]0,x_R[$ such that $I(x_0)<0$ for each $x^*<\frac{1}{\gamma_-}$ and $x^*\sim\frac{1}{\gamma_-}$ ($I$ is continuous). On the other hand, we know that the graph of $hp$ lies above the graph of $\Pi$  for $x>0$ and $x\sim 0$. Then Lemma \ref{lemma-below-above}.1 implies that $I'(x)>0$ for all $x>0$ and $x\sim 0$. Since $I(0)=0$, we have $I(x)>0$ for all $x>0$ and $x\sim 0$. From The Intermediate-Value Theorem it follows now that $I$ has a zero in $]0,x_R[$ when $x^*$ is close enough to $\frac{1}{\gamma_-}$. Then Theorem \ref{theorem-cyclicity-RHKK}.3 implies that for any sufficiently small $\theta>0$, $\cycl(\cup_{x\in[\theta,x_R-\theta]}\Gamma_x)=2$.  \\
\\
\textit{Statement 4.} Suppose that $x^*=x_R$. The domain of $I$ is $[0,x_R[$ (see the proof of Statement $1$). Since $\alpha_+\beta_-\ne 0$ and $V(x^*)=0$, points on the lines $x=x_R$ and $y=x_R$ are solutions of $\overline \Delta=0$ (see Fig. \ref{fig:contactsaddle}(d)). Since the line $y=x_R$ lies above the graph of $\Pi$ for $x\in ]0,x_R[$, Lemma \ref{lemma-below-above}.1 implies that $\tilde I'(x)>0$ (i.e., $I'(x)>0$) for all $x\in ]0,x_R[$. Thus, $I>0$ on $]0,x_R[$. The rest of Statement $4$ follows from Theorem \ref{theorem-cyclicity-RHKK}.1.\\
\\
\textit{Statement 5.} Suppose that $0<x^*<x_R$. Then \eqref{deriv-hyper} and $V(x^*)>0$ imply that $hp(0)>0$ and $hp'(x)>0$ for all $x\ne x^*$. The graph of $hp$ is given in Fig. \ref{fig:contactsaddle}(e). ´

Since $0<x^*<x_R$, the domain of $I$ is $[0,x^*[$ (see Fig. \ref{fig:sedlo}(b)). Clearly, the graph of $hp$ lies above the graph of $\Pi$ for $x\in ]0,x^*[$ and Lemma \ref{lemma-below-above}.1 implies that $I'(x)>0$ for all $x\in ]0,x^*[$. Thus, $I>0$ on $]0,x^*[$. The rest of Statement $5$ follows from Theorem \ref{theorem-cyclicity-RHKK}.1.\\
\\
\textit{Statement 6.} Suppose that $x_L<x^*<0$. From \eqref{deriv-hyper} and $V(x^*)>0$ it follows that $hp(0)<0$ and $hp'(x)>0$ for all $x\ne x^*$. The graph of $hp$ is given in Fig. \ref{fig:contactsaddle}(f). 

Since $x_L<x^*<0$, the domain of $I$ is $[0,\Pi^{-1}(x^*)[$ (Fig. \ref{fig:sedlo}(d)). It is clear that the graph of $hp$ lies below the graph of $\Pi$ for $x\in ]0,\Pi^{-1}(x^*)[$ and  $I'(x)<0$ for all $x\in ]0,\Pi^{-1}(x^*)[$ (see Lemma \ref{lemma-below-above}.1). Thus, $I<0$ on $]0,\Pi^{-1}(x^*)[$. The rest of Statement $6$ follows from Theorem \ref{theorem-cyclicity-RHKK}.1.\\
\\
\textit{Statement 7.} The proof of Statement $7$ is similar to the proof of Statement $4$. Since $x^*=x_L$, the domain of $I$ is $[0,x_R[$ (Fig. \ref{fig:sedlo}(e)).\\
\\
\textit{Statement 8.} Suppose that $x^*<x_L$. From \eqref{deriv-hyper} and $V(x^*)<0$ it follows that $hp(0)<0$ and $hp'(x)<0$ for all $x\ne x^*$. The graph of $hp$ is given in Fig. \ref{fig:contactsaddle}(h). We use Lemma \ref{lemma-saddle}.4 and see that the contact points are $(x,y)=(x_L,x_R)$, $(x,y)=(x_R,x_L)$, $(x,y)=(x_C,-x_C)$ and $(x,y)=(-x_C,x_C)$, with $-x_L<x_C<-x^*$.

Since $x^*<x_L$, the domain of $I$ is $[0,x_R[$ (Fig. \ref{fig:sedlo}(e)). We can prove that the graph of $hp$ lies below the graph of $\Pi$ for $x\in ]0,x_R[$ (see the proof of Statement $1$). Notice that the $x-$coordinate of the above contact points is not contained in $]0,x_R[$.\\
\\
\textit{Statement 9.} Suppose that $\alpha_+=0$. Let us recall that $\beta_-<0$ and $\gamma_->0$. The solutions of $\overline \Delta=0$ are given by \eqref{line-equation} (see the red line in Fig. \ref{fig:contactsaddle}(i)). From \eqref{contactpointsCase2} it follows that the contact points between the orbits of system \eqref{3degreepolysys} and the line given in \eqref{line-equation} are $(x,y)=(x_L,x_R)$ and $(x,y)=(x_R,x_L)$.

The domain of $I$ is $[0,x_R[$ (Fig. \ref{fig:sedlo}(c)). We can show that the line \eqref{line-equation} lies below the graph of $\Pi$ for $x\in ]0,x_R[$ (see the proof of Statement $1$).\end{proof}

\subsection{The node case}\label{node-subsection}

\begin{figure}[htb]
	\begin{center}		\includegraphics[width=12.4cm,height=2.8cm]{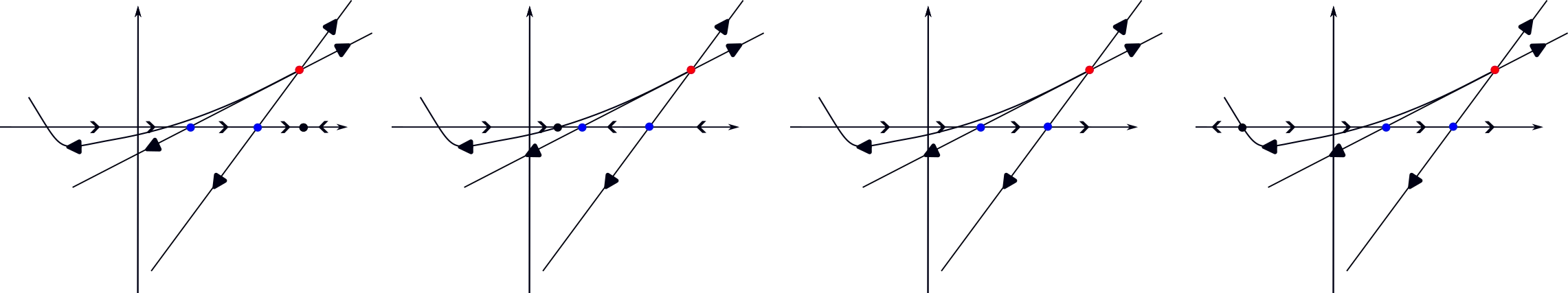}
		{\footnotesize
   \put(-300,38){\footnotesize{$x_R$}}
   \put(-313,38){\footnotesize{$x_L$}}
   \put(-287,38){\footnotesize{$x^*$}}
   \put(-333,-10){\footnotesize{(a) $x_L\le x^*$}}
    \put(-210,38){\footnotesize{$x_R$}}
   \put(-224,38){\footnotesize{$x_L$}}
   \put(-230,48){\footnotesize{$x^*$}}
   \put(-255,-10){\footnotesize{(b) $0<x^*<x_L$}}
    \put(-120,38){\footnotesize{$x_R$}}
   \put(-135,38){\footnotesize{$x_L$}}
   \put(-164,-10){\footnotesize{(c) $\alpha_+=0$}}
    \put(-29,38){\footnotesize{$x_R$}}
   \put(-44,38){\footnotesize{$x_L$}}
   \put(-77,39){\footnotesize{$x^*$}}
   \put(-70,-10){\footnotesize{(d) $x^*<0$}}
   }
         \end{center}
	\caption{Phase portraits of $Z^-$ defined in \eqref{linearPWS} and the direction of the sliding vector field \eqref{sliding-linear} along $y=0$, for $\beta_->0$, $\gamma_->0$ and $\gamma_-^2-4\beta_->0$.  $Z^-$ has a repelling node with distinct eigenvalues. We do not draw the corresponding phase portraits of $Z^+$.}
	\label{fig:cvor}
\end{figure}

\subsubsection{Distinct eigenvalues} 
In this section we assume that $\beta_->0$, $\gamma_->0$ and $\gamma_-^2-4\beta_->0$. System $Z^-$ has a repelling node at $(x,y)=(\frac{\gamma_-}{\beta_-},\frac{1}{\beta_-})$ with eigenvalues $0<\varkappa_-<\varkappa_+$ where $\varkappa_\pm$ are given in \eqref{eigenvaluesZ-}. The straight-line solution corresponding to the eigenvalue $\varkappa_-$ (resp. $\varkappa_+$) is given by $x=\varkappa_+y+x_L$ (resp. $x=\varkappa_-y+x_R$) where $0<x_L<x_R$ are defined in \eqref{intersect-pointsRL}.
 We refer to Lemma \ref{lemma-LRkappa} and Fig. \ref{fig:cvor}.

Using Fig. \ref{fig:cvor} we see that the domain and image of $\Pi$ are respectively $[0,x_L[$ and $]-\infty,0]$ (see also \cite{Carmona}). The domain of the slow divergence integral $I$ (or $\tilde I$) depends on $x^*$. We distinguish between $4$ cases.
\begin{enumerate}
    \item[(a)] If $\alpha_+<0$ (hence $x^*>0$) and $x_L\le x^*$, then the domain of $I$ is $[0,x_L[$ and we consider the canard cycle $\Gamma_x$ for all $x\in ]0,x_L[$ (see Fig. \ref{fig:cvor}(a)).
    \item[(b)] If $\alpha_+<0$  and $x^*< x_L$, then the domain of $I$ is $[0,x^*[$ and we consider the canard cycle $\Gamma_x$ for all $x\in ]0,x^*[$ (see Fig. \ref{fig:cvor}(b)).
\item[(c)] If $\alpha_+=0$, then we have the same domain of $I$ as in the case (a) (see Fig. \ref{fig:cvor}(c)).
 \item[(d)] If $\alpha_+>0$ (hence $x^*<0$), then the domain of $I$ is $[0,\Pi^{-1}(x^*)[$ and we consider the canard cycle $\Gamma_x$ for all $x\in ]0,\Pi^{-1}(x^*)[$ (see Fig. \ref{fig:cvor}(d)).
\end{enumerate}

Apart from the hyperbolic saddle at the origin, system \eqref{3degreepolysys} has a hyperbolic and attracting node at $(x,y)=(x_L,x_L)$, a hyperbolic and repelling node at $(x,y)=(x_R,x_R)$, and hyperbolic saddles at $(x,y)=(x_R,x_L)$ and $(x,y)=(x_L,x_R)$ (Fig. \ref{fig:contactnode}). Notice that the invariant line $x=x_L$ is the vertical asymptote for the graph of the Poincar\'{e} half-map $\Pi$.

\begin{figure}[htb]
	\begin{center}		\includegraphics[width=13.3cm,height=10.5cm]{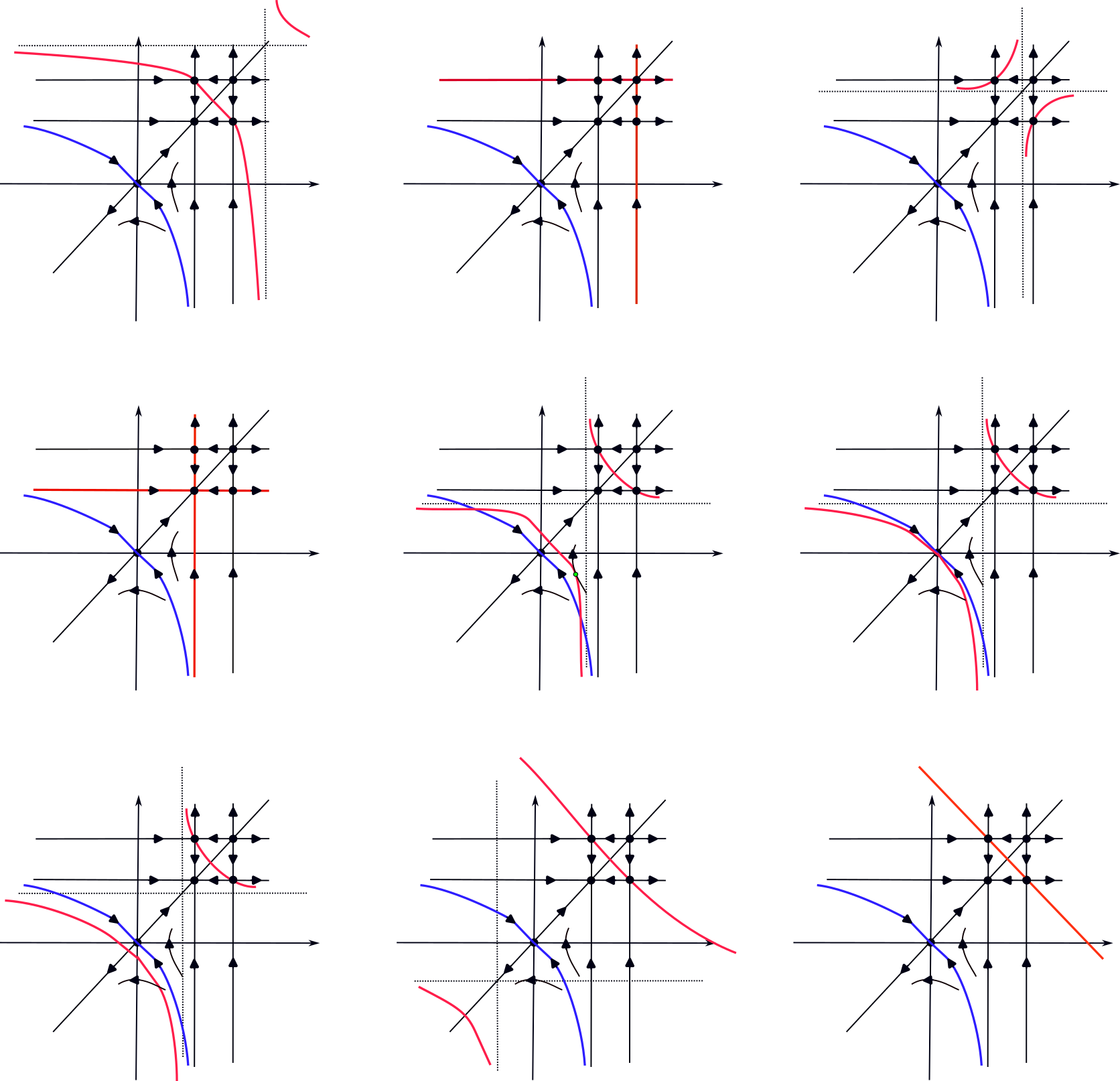}
		{\footnotesize
  \put(-305,250){\footnotesize{$x_R$}}
   \put(-318,250){\footnotesize{$x_L$}}
   \put(-347,200){\footnotesize{(a) $x_R<x^*$}}
   \put(-167,250){\footnotesize{$x_R$}}
   \put(-181,250){\footnotesize{$x_L$}}
   \put(-213,200){\footnotesize{(b) $x^*=x_R$}}
   \put(-31,250){\footnotesize{$x_R$}}
   \put(-46,250){\footnotesize{$x_L$}}
    \put(-92,200){\footnotesize{(c) $x_L<x^*<x_R$}}
    \put(-303,148){\footnotesize{$x_R$}}
   \put(-317,148){\footnotesize{$x_L$}}
    \put(-347,97){\footnotesize{(d) $x^*=x_L$}}
   \put(-167,148){\footnotesize{$x_R$}}
   \put(-179,148){\footnotesize{$x_L$}}
    \put(-213,97){\footnotesize{(e) $\frac{1}{\gamma_-}<x^*<x_L$}}
   \put(-32,148){\footnotesize{$x_R$}}
   \put(-44,148){\footnotesize{$x_L$}}
    \put(-92,97){\footnotesize{(f) $x^*=\frac{1}{\gamma_-}$}}
   \put(-303,41){\footnotesize{$x_R$}}
   \put(-317,41){\footnotesize{$x_L$}}
    \put(-347,-15){\footnotesize{(g) $0<x^*<\frac{1}{\gamma_-}$}}
   \put(-167,41){\footnotesize{$x_R$}}
   \put(-181,41){\footnotesize{$x_L$}}
   \put(-213,-15){\footnotesize{(h) $x^*<0$}}
    \put(-34,41){\footnotesize{$x_R$}}
   \put(-48,41){\footnotesize{$x_L$}}
   \put(-92,-15){\footnotesize{(i) $\alpha_+=0$}}
}
         \end{center}
	\caption{The phase portrait of \eqref{3degreepolysys} for $\beta_->0$, $\gamma_->0$ and $\gamma_-^2-4\beta_->0$, with the curve $\overline \Delta(x,y)=0$ (red). The part of the blue curve located in the fourth quadrant is the graph of $\Pi$. We draw $x=x^*$ and $y=x^*$ using dashed lines. $(x,y)=(x_C,-x_C)$ and $(x,y)=(-x_C,x_C)$ are the intersection points between the red curve and $y=-x$. We draw the contact point $(x,y)=(x_C,-x_C)$ when $x_C$ is positive and contained in the domain of $I$ (Fig. \ref{fig:contactnode}(e)).}
	\label{fig:contactnode}
\end{figure}

\begin{theorem}\label{thm-distinctnode}
Suppose that $\beta_->0$, $\gamma_->0$ and $\gamma_-^2-4\beta_->0$. Then $\frac{1}{\gamma_-}<x_L$ and the following statements are true.
\begin{enumerate}
    \item{($\alpha_+<0$)} If $x_R<x^*$ (Fig. \ref{fig:contactnode}(a)), we have $ I<0$ on $]0,x_L[$ and, for any small $\theta>0$,  $\cycl(\cup_{x\in[\theta,x_L-\theta]}\Gamma_x)=1$ (the limit cycle is attracting).
   \item{($\alpha_+<0$)} If $x^*=x_R$ (Fig. \ref{fig:contactnode}(b)), then we have $I<0$ on $]0,x_L[$ and, for any small $\theta>0$, 
 $\cycl(\cup_{x\in[\theta,x_L-\theta]}\Gamma_x)=1$ (the limit cycle is attracting).
    \item{($\alpha_+<0$)} If $x_L<x^*<x_R$ (Fig. \ref{fig:contactnode}(c)), then we have $ I<0$ on $]0,x_L[$ and, for any small $\theta>0$, $\cycl(\cup_{x\in[\theta,x_L-\theta]}\Gamma_x)=1$. The limit cycle is attracting.
     \item{($\alpha_+<0$)} If $x^*=x_L$ (Fig. \ref{fig:contactnode}(d)), then $I<0$ on $]0,x_L[$ and, for any small $\theta>0$, $\cycl(\cup_{x\in[\theta,x_L-\theta]}\Gamma_x)=1$. The limit cycle is attracting.
     \item{($\alpha_+<0$)} If $\frac{1}{\gamma_-}<x^*<x_L$ (Fig. \ref{fig:contactnode}(e)), then the function $I$ has precisely $1$ zero counting multiplicity on $]0,x^*[$ and, for any sufficiently small $\theta>0$, $\cycl(\cup_{x\in[\theta,x^*-\theta]}\Gamma_x)=2$.
      \item{($\alpha_+<0$)} If $x^*=\frac{1}{\gamma_-}$ (Fig. \ref{fig:contactnode}(f)), then $I>0$ on $]0,x^*[$ and, for any small $\theta>0$, $\cycl(\cup_{x\in[\theta,x^*-\theta]}\Gamma_x)=1$ (the limit cycle is repelling).
      \item{($\alpha_+<0$)} If $0<x^*<\frac{1}{\gamma_-}$ (Fig. \ref{fig:contactnode}(g)), then $I>0$ on $]0,x^*[$ and, for any small $\theta>0$, $\cycl(\cup_{x\in[\theta,x^*-\theta]}\Gamma_x)=1$ (the limit cycle is repelling).
        \item{($\alpha_+>0$)} If $x^*<0$ (Fig. \ref{fig:contactnode}(h)), then $ I<0$ on $]0,\Pi^{-1}(x^*)[$ and, for any small $\theta>0$, $\cycl(\cup_{x\in[\theta,\Pi^{-1}(x^*)-\theta]}\Gamma_x)=1$ (the limit cycle is attracting).
    \item If $\alpha_+=0$ (Fig. \ref{fig:contactnode}(i)), then we have $I<0$ on $]0,x_L[$ and, for any small $\theta>0$,  $\cycl(\cup_{x\in[\theta,x_L-\theta]}\Gamma_x)=1$ (the limit cycle is attracting).
\end{enumerate}
\end{theorem}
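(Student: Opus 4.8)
The plan is to run, case by case, exactly the scheme of the saddle case (Theorem~\ref{theorem-saddle}), now with $\beta_->0$: in each subcase I would read off the shape of $\{\overline\Delta=0\}$ and its monotonicity from \eqref{hyperbola}--\eqref{deriv-hyper} (or the degenerate forms \eqref{unionlines}, \eqref{line-equation}), locate the contact points of \eqref{3degreepolysys} with $\{\overline\Delta=0\}$ via \eqref{contactpointsCase1}, decide on which side of the graph $y=\Pi(x)$ the curve lies on the relevant domain of $I$, convert this into the sign of $\tilde I'$ by Lemma~\ref{lemma-below-above}.2, and finish with Theorem~\ref{theorem-cyclicity-RHKK}. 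First I would dispose of the auxiliary claim $\tfrac1{\gamma_-}<x_L$: the node has $0<\varkappa_-<\varkappa_+$ with $\varkappa_-+\varkappa_+=\gamma_-$ and $x_L=1/\varkappa_+$ by \eqref{intersect-pointsRL}, so $\varkappa_+<\gamma_-$ and hence $1/\gamma_-<x_L$. I would also stress the structural novelty compared with the saddle case: the domain of $\Pi$ is now $[0,x_L[$ with $x=x_L$ a \emph{vertical asymptote}, so $\Pi(x)\to-\infty$ as $x\to x_L^-$; this replaces the finite right endpoint and is what makes the endpoint comparisons work.

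Next I would record the node analogue of Lemma~\ref{lemma-saddle} for the single interior contact abscissa $x_C=\sqrt{(\gamma_-x^*-1)/\beta_-}$ of \eqref{contact-important}. Since $\beta_->0$, $x_C$ is real iff $x^*\ge 1/\gamma_-$, and $x_C^2$ is strictly increasing in $x^*$ with $x_C=0,x_L,x_R$ at $x^*=\tfrac1{\gamma_-},x_L,x_R$ (the last two from $V(x_{L,R})=0$); hence $0<x_C<x_L\iff\tfrac1{\gamma_-}<x^*<x_L$, $x_L<x_C<x_R\iff x_L<x^*<x_R$, $x_C>x_R\iff x^*>x_R$, and moreover $x_C<x^*\iff V(x^*)>0$. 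From \eqref{deriv-hyper}, because $\beta_->0$, $\sgn hp'=-\sgn V(x^*)$ (so $hp$ decreases for $x^*<x_L$ or $x^*>x_R$ and increases for $x_L<x^*<x_R$), and $hp(0)$ has the sign of $(\gamma_-x^*-1)/x^*$.

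For the eight one-sided cases (a)--(d),(f)--(i) I would argue, as in the saddle case, that $\{\overline\Delta=0\}$ stays strictly on one side of $y=\Pi(x)$ throughout, using the transversality principle: on any arc of $\{\overline\Delta=0\}$ free of contact points the foliation by orbits of \eqref{3degreepolysys} is transverse, so the single orbit $\Pi$ meets such an arc at most once. In (a),(b),(c) one has $x^*\ge x_L$, so $x_C\notin\,]0,x_L[$ and there are no interior contact points; together with $hp(0)>0$ (or the line $y=x_R$ in (b), via \eqref{unionlines}) and $\Pi\to-\infty$ at $x_L^-$, the curve lies \emph{above} $\Pi$, giving $I<0$. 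In (d) the relevant component of $\{\overline\Delta=0\}$ is $y=x_L>0$, again above $\Pi$; in (i) the line \eqref{line-equation} passes through $(x_L,x_R),(x_R,x_L)$ (since $x_L+x_R=\gamma_-/\beta_-$), both abscissae outside $]0,x_L[$, so it stays above; and in (h) ($x^*<0$) the decreasing branch lies above its horizontal asymptote $y=x^*=\Pi(\Pi^{-1}(x^*))$ and has $hp(0)>0$, hence above $\Pi$ on $[0,\Pi^{-1}(x^*)[$. All these give $I<0$ and attracting cyclicity one. In (g) ($0<x^*<\tfrac1{\gamma_-}$, domain $[0,x^*[$) there is no real interior contact point, $hp(0)<0$, and at the vertical asymptote $hp(x)\to-\infty$ (numerator $\alpha_+V(x^*)<0$, $X^{sl}\to0^+$), so $hp$ lies \emph{below} $\Pi$ throughout, whence $I>0$ and repelling cyclicity one. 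Case (f), $x^*=\tfrac1{\gamma_-}$, is the transition: here $hp(0)=0$ and a short computation gives $hp'(0)=-1$, i.e.\ $hp$ is tangent to $\Pi$ at the saddle origin; treating it as in Theorem~\ref{theorem-saddle}(2) (no interior contact point, $hp\to-\infty$ at $x^{*-}$) yields $hp$ below $\Pi$ on $]0,x^*[$, so $I>0$. Throughout, Lemma~\ref{lemma-below-above}.2 turns ``above/below'' into $\sgn\tilde I'$ and Theorem~\ref{theorem-cyclicity-RHKK}.1 closes each case.

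The main case, and the main obstacle, is (e): $\tfrac1{\gamma_-}<x^*<x_L$, $\alpha_+<0$, domain $[0,x^*[$. Here $V(x^*)>0$, so $hp$ is decreasing with $hp(0)>0$, and the \emph{only} interior contact point is $(x_C,-x_C)$ with $0<x_C<x^*$ (using $x_C<x^*\iff V(x^*)>0$), the others $(x_L,x_R),(x_R,x_L),(-x_C,x_C)$ having abscissa outside $]0,x^*[$. This splits the relevant arc into two contact-free pieces, so $\Pi$ meets $\{\overline\Delta=0\}$ at most twice; since $hp(0)>0>\Pi(0^+)$ while $hp\to-\infty$ at the asymptote $x^{*-}$ (numerator $\alpha_+V(x^*)<0$) and $\Pi(x^*)$ is finite, the endpoints lie on opposite sides, forcing an odd and hence exactly one transversal crossing, i.e.\ $\tilde I'=\Psi\Delta$ has exactly one simple sign change. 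To upgrade this to a genuine simple zero of $I$ I would analyze $I$ at the boundary: $\tilde I'<0$ near $0^+$ (curve above) gives $I<0$ just past $0$, whereas $\int_{\Pi(x)}^x u\,du/X^{sl}(u)\to+\infty$ as $x\to x^{*-}$ because the positive integrand $u/X^{sl}(u)\sim x^*\big/\big(\tfrac{\alpha_+}{1+\delta_+}(u-x^*)\big)$ has a nonintegrable singularity at $u=x^*$; hence $I\to+\infty$. Combined with the single sign change of $\tilde I'$, $I$ decreases to a negative minimum and then increases to $+\infty$, crossing zero once and simply at some $x_1\in\,]0,x^*[$; picking any $x_2\in\,]x_1,x^*[$, Theorem~\ref{theorem-cyclicity-RHKK}.3 gives $\cycl(\cup_{x\in[\theta,x^*-\theta]}\Gamma_x)=2$, which also supplies the promised parameter values with a simple zero. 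I expect the delicate points to be precisely this asymptotic analysis at $x^{*-}$ and the verification (through the node analogue of Lemma~\ref{lemma-saddle}) that $(x_C,-x_C)$ is the unique interior contact point, so that the ``at most two crossings, parity one'' count is tight; the remaining cases become routine once the endpoint signs and the monotonicity of $hp$ from \eqref{deriv-hyper} are established.
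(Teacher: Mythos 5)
Your proposal is correct and follows essentially the same route as the paper: the same case split, the same use of the hyperbola $hp$ from \eqref{hyperbola}--\eqref{deriv-hyper} and the contact-point equation \eqref{contactpointsCase1} (your node analogue of Lemma~\ref{lemma-saddle} is exactly the paper's Lemma~\ref{lemma-nodedistinct}), the conversion to $\sgn\tilde I'$ via Lemma~\ref{lemma-below-above}.2, and in the key case (e) the identical endpoint argument ($I<0$ near $0$, $I\to+\infty$ as $x\to x^{*-}$ from the nonintegrable singularity of $u/X^{sl}(u)$) combined with the uniqueness of the interior contact point and Theorem~\ref{theorem-cyclicity-RHKK}.3. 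Your minor additions (the trace argument for $\tfrac{1}{\gamma_-}<x_L$, the observation $hp'(0)=-1$ in case (f)) are correct but do not change the structure of the argument.
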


\begin{proof}
Suppose that $\beta_->0$, $\gamma_->0$ and $\gamma_-^2-4\beta_->0$. From \eqref{intersect-pointsRL} it follows that
$$x_L=\frac{2}{\gamma_-+\sqrt{\gamma_-^2-4 \beta_-}}>0, \ \ x_R=\frac{2}{\gamma_--\sqrt{\gamma_-^2-4 \beta_-}}>0,$$
with $x_L<x_R$, $V(x_L)=V(x_R)=0$ where $V(x)=\beta_-x^2-\gamma_-x+1$. The graph of $V$ is concave up ($\beta_->0$).
It is not difficult to see that $\frac{1}{\gamma_-}<x_L$. Using \eqref{contact-important} $x_C\ge 0$ is well-defined if $x^*\ge\frac{1}{\gamma_-}$.

    \begin{lemma}\label{lemma-nodedistinct}
Suppose that $\beta_->0$, $\gamma_->0$, $\gamma_-^2-4\beta_->0$, $\alpha_+\ne 0$ and $V(x^*)\ne 0$ (i.e. $x^*\ne x_L,x_R$). Then the following statements are true.
\begin{enumerate}
\item If $x_R<x^*$, then $x_R<x_C<x^*$.
\item If $x_L<x^*<x_R$, then $x^*<x_C<x_R$.
    \item If $\frac{1}{\gamma_-}<x^*<x_L $, then $0< x_C<x^*$.
    \item If $x^*= \frac{1}{\gamma_-}$, then $x_C=0$.
\end{enumerate}
\end{lemma}
Now, we prove the statements of Theorem \ref{thm-distinctnode}.\\
\\
\textit{Statement 1.} Suppose that $x_R<x^*$. Then \eqref{deriv-hyper} and $V(x^*)>0$ imply that $hp(0)>0$ and $hp'(x)<0$ for all $x\ne x^*$. The graph of $hp$ is given in Fig. \ref{fig:contactnode}(a). 

Since $x_L<x_R<x^*$, the domain of $I$ is $[0,x_L[$ (see Fig. \ref{fig:cvor}(a)). It is clear (Fig. \ref{fig:contactnode}(a)) that the graph of $hp$ lies above the graph of $\Pi$ for $x\in ]0,x_L[$ and Lemma \ref{lemma-below-above}.2 implies that $I'(x)<0$ for all $x\in ]0,x_L[$. Hence, $I<0$ on $]0,x_L[$. Following Theorem \ref{theorem-cyclicity-RHKK}.1, for any small $\theta>0$, $\cycl(\cup_{x\in[\theta,x_L-\theta]}\Gamma_x)=1$ (the limit cycle is attracting because $I$ is negative).\\
\\
\textit{Statement 2.} Assume that $x^*=x_R$. The domain of $I$ is $[0,x_L[$ (see the proof of Statement $1$). Since $\alpha_+\beta_-\ne 0$ and $V(x^*)=0$, $\overline \Delta=0$ is the union of $x=x_R$ and $y=x_R$ (see Fig. \ref{fig:contactnode}(b)). The horizontal line $y=x_R$ lies above the graph of $\Pi$ for $x\in ]0,x_L[$, and Lemma \ref{lemma-below-above}.2 implies that $I'(x)<0$ for all $x\in ]0,x_L[$. Thus, $I<0$ on $]0,x_L[$. For any small $\theta>0$, $\cycl(\cup_{x\in[\theta,x_L-\theta]}\Gamma_x)=1$ (the limit cycle is attracting). See Theorem \ref{theorem-cyclicity-RHKK}.1.\\
\\
\textit{Statement 3.} Suppose that $x_L<x^*<x_R$. Then \eqref{deriv-hyper} and $V(x^*)<0$ imply that $hp(0)>0$ and $hp'(x)>0$ for all $x\ne x^*$. The graph of $hp$ is given in Fig. \ref{fig:contactnode}(c). 
The proof of Statement $3$ is similar to the proof of Statement $1$.\\
\\
\textit{Statement 4.} Statement $4$ can be proved in the same fashion as Statement $2$ (see Fig. \ref{fig:contactnode}(d)).\\
\\
\textit{Statement 5.} Suppose that $\frac{1}{\gamma_-}<x^*<x_L$. From \eqref{deriv-hyper} and $V(x^*)>0$ it follows that $hp(0)>0$ and $hp'(x)<0$ for all $x\ne x^*$.  The graph of $hp$ is given in Fig. \ref{fig:contactnode}(e). Lemma \ref{lemma-nodedistinct}.3 implies that the contact points between the orbits of system \eqref{3degreepolysys} and $y=hp(x)$ are $(x,y)=(x_L,x_R)$, $(x,y)=(x_R,x_L)$, $(x,y)=(x_C,-x_C)$ and $(x,y)=(-x_C,x_C)$, with $0<x_C<x^*$.

The domain of $I$ is $[0,x^*[$ (Fig. \ref{fig:cvor}(b)). First we show that there is precisely $1$ intersection (counting multiplicity) between the graph of $hp$ and the graph of $\Pi$ for $x\in ]0,x^*[$. This will imply that $I$ has at most $1$ zero (counting multiplicity)  on $]0,x^*[$ (see the proof of Theorem \ref{theorem-saddle}.3).   
Since $hp(0)>0$, the graph of $hp$ lies above the graph of $\Pi$  for $x>0$ and $x\sim 0$. Notice that $\Pi(x^*)$ is finite and that $hp(x)\to -\infty$ as $x\to x^*-$. Thus, the graph of $hp$ lies below the graph of $\Pi$  for $x<x^*$ and $x\sim x^*$. We conclude that there exists at least $1$ intersection between the graph of $hp$ and the graph of $\Pi$ for $x\in ]0,x^*[$ (The Intermediate-Value Theorem).
If we suppose that there exist at least $2$ intersections (counting multiplicity), then, apart from $(x,y)=(x_C,-x_C)$, we have at least $1$ extra contact point with the $x-$coordinate contained in $]0,x^*[$. This gives a contradiction. Thus, we have found precisely $1$ intersection (counting multiplicity).

Now, we prove that $I$ has a zero in $]0,x^*[$. Then  the above discussion implies that $I$ has precisely $1$ zero counting multiplicity on $]0,x^*[$. Since the graph of $hp$ lies above the graph of $\Pi$  for $x>0$ and $x\sim 0$, Lemma \ref{lemma-below-above}.2 implies that $I'(x)<0$ for $x>0$ and $x\sim 0$. Hence, $I(x)<0$ for $x>0$ and $x\sim 0$. The integral in \eqref{SDI-linearPWS} can be written as
$$\int_{\Pi(x)}^{x}\frac{udu}{X^{sl}(u)}=\int_{\Pi(x)}^{0}\frac{udu}{X^{sl}(u)}+\int_{0}^{x}\frac{udu}{X^{sl}(u)}.$$
Since $\Pi(x^*)<0$ (finite) and $x^*=-\frac{B-\delta_+}{\alpha_+}>0$, the first component $\int_{\Pi(x)}^{0}$ tends to a negative number as $x\to x^*-$ (thus, it is bounded). It is clear that the second integral is divergent: $\int_{0}^{x}\to +\infty$ as $x\to x^*-$. This implies that $I$ is positive for $x<x^*$ and $x\sim x^*$. From The Intermediate-Value Theorem it follows that $I$ has a zero in $]0,x^*[$.

Now, Theorem \ref{theorem-cyclicity-RHKK}.3 implies that for any sufficiently small $\theta>0$ we get $\cycl(\cup_{x\in[\theta,x^*-\theta]}\Gamma_x)=2$.\\
\\
\textit{Statement 6.} Suppose that $x^*=\frac{1}{\gamma_-}$. Then \eqref{deriv-hyper} and $V(x^*)>0$ imply that $hp(0)=0$ and $hp'(x)<0$ for all $x\ne x^*$. The graph of $hp$ is given in Fig. \ref{fig:contactnode}(f). Using Lemma \ref{lemma-nodedistinct}.4, the contact points between the orbits of system \eqref{3degreepolysys} and $y=hp(x)$ are $(x,y)=(x_L,x_R)$, $(x,y)=(x_R,x_L)$ and $(x,y)=(0,0)$.

The domain of $I$ is $[0,x^*[$ (see the proof of Statement $5$). 
We can prove that the graph of $hp$ lies below the graph of $\Pi$ for $x\in ]0,x^*[$ using the same idea as in the proof of Theorem  \ref{theorem-saddle}.2. Then Lemma \ref{lemma-below-above}.2 implies that $ I>0$ on $]0,x^*[$. Following Theorem \ref{theorem-cyclicity-RHKK}.1, for any small $\theta>0$, $\cycl(\cup_{x\in[\theta,x^*-\theta]}\Gamma_x)=1$ (the limit cycle is repelling).\\
\\
\textit{Statement 7.} Suppose that $0<x^*<\frac{1}{\gamma_-}$. Then we have $hp(0)<0$ and $hp'(x)<0$ for all $x\ne x^*$. The graph of $hp$ is given in Fig. \ref{fig:contactnode}(g). The contact points between the orbits of system \eqref{3degreepolysys} and $y=hp(x)$ are $(x,y)=(x_L,x_R)$ and $(x,y)=(x_R,x_L)$.

The domain of $I$ is $[0,x^*[$ (see the proof of Statement $5$). 
Again, we can show that the graph of $hp$ lies below the graph of $\Pi$ for $x\in ]0,x^*[$ using the same technique as in the proof of Theorem  \ref{theorem-saddle}.1. Then Lemma \ref{lemma-below-above}.2 implies that $ I>0$ on $]0,x^*[$. Using Theorem \ref{theorem-cyclicity-RHKK}.1, for any small $\theta>0$, we get $\cycl(\cup_{x\in[\theta,x^*-\theta]}\Gamma_x)=1$ (the limit cycle is repelling).\\
\\
\textit{Statement 8.} Suppose that $x^*<0$. Then $hp(0)>0$ and $hp'(x)<0$ for all $x\ne x^*$. The graph of $hp$ is given in Fig. \ref{fig:contactnode}(h).

The domain of $I$ is $[0,\Pi^{-1}(x^*)[$ (Fig. \ref{fig:cvor}(d)). Clearly, 
the graph of $hp$ lies above the graph of $\Pi$ for $x\in ]0,\Pi^{-1}(x^*)[$.  Then Lemma \ref{lemma-below-above}.2 implies that $ I<0$ on $]0,\Pi^{-1}(x^*)[$. Again, Theorem \ref{theorem-cyclicity-RHKK}.1 implies that for any small $\theta>0$ $\cycl(\cup_{x\in[\theta,\Pi^{-1}(x^*)-\theta]}\Gamma_x)=1$ (the limit cycle is attracting).\\
\\
\textit{Statement 9.} Assume that $\alpha_+=0$. Recall that $\beta_->0$ and $\gamma_->0$. The curve $\overline \Delta=0$ is given by \eqref{line-equation}. We refer to Fig. \ref{fig:contactnode}(i).
The domain of $I$ is $[0,x_L[$ (Fig. \ref{fig:cvor}(c)). The graph of $hp$ lies above the graph of $\Pi$ for $x\in ]0,x_L[$. This, together with Lemma \ref{lemma-below-above}.2 and Theorem \ref{theorem-cyclicity-RHKK}.1, implies Statement $9$.\end{proof}

\subsubsection{Repeated eigenvalues} 
In this section we assume that $\beta_->0$, $\gamma_->0$ and $\gamma_-^2-4\beta_-=0$. System $Z^-$ has a repelling node at $(x,y)=(\frac{4}{\gamma_-},\frac{4}{\gamma_-^2})$ with repeated eigenvalues $\varkappa_\pm=\frac{\gamma_-}{2}$. The straight-line solution corresponding to the eigenvalue is given by $x=\frac{\gamma_-}{2}y+\frac{2}{\gamma_-}$.
 We refer to Lemma \ref{lemma-LRkappa} and Fig. \ref{fig:cvoradded}.

\begin{figure}[htb]
	\begin{center}		\includegraphics[width=12.4cm,height=3.05cm]{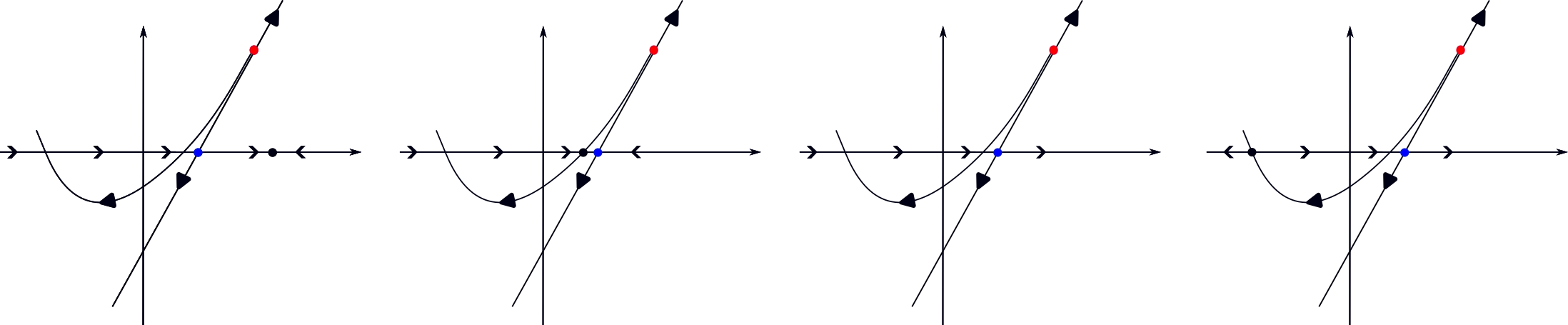}
		{\footnotesize
   \put(-315,36){\footnotesize{ $\frac{2}{\gamma_-}$}}
   \put(-295,39){\footnotesize{$x^*$}}
   \put(-335,-12){\footnotesize{(a) $\frac{2}{\gamma_-}\le x^*$}}
   \put(-224,37){\footnotesize{ $\frac{2}{\gamma_-}$}}
   \put(-225,49){\footnotesize{$x^*$}}
   \put(-134,37){\footnotesize{ $\frac{2}{\gamma_-}$}}
   \put(-42,37){\footnotesize{ $\frac{2}{\gamma_-}$}}
    \put(-75,39){\footnotesize{$x^*$}}
    \put(-257,-12){\footnotesize{(b) $0<x^*<\frac{2}{\gamma_-}$}}
    \put(-164,-12){\footnotesize{(c) $\alpha_+=0$}}
     \put(-70,-12){\footnotesize{(d) $x^*<0$}}
}
         \end{center}
	\caption{Phase portraits of $Z^-$ defined in \eqref{linearPWS} and the direction of the sliding vector field \eqref{sliding-linear} along $y=0$, with $\gamma_->0$ and $\gamma_-^2-4\beta_-=0$. $Z^-$ has a repelling node with repeated eigenvalues.  We do not draw the corresponding phase portraits of $Z^+$.}
	\label{fig:cvoradded}
\end{figure}

In this case the domain and image of $\Pi$ are respectively $[0,\frac{2}{\gamma_-}[$ and $]-\infty,0]$ (see also \cite{Carmona}). We distinguish between $4$ cases.
\begin{enumerate}
    \item[(a)] If $\alpha_+<0$ (hence $x^*>0$) and $\frac{2}{\gamma_-}\le x^*$, then the domain of $I$ is $[0,\frac{2}{\gamma_-}[$ and we consider the canard cycle $\Gamma_x$ for all $x\in ]0,\frac{2}{\gamma_-}[$ (see Fig. \ref{fig:cvoradded}(a)).
    \item[(b)] If $\alpha_+<0$  and $x^*< \frac{2}{\gamma_-}$, then the domain of $I$ is $[0,x^*[$ and we consider the canard cycle $\Gamma_x$ for all $x\in ]0,x^*[$ (see Fig. \ref{fig:cvoradded}(b)).
\item[(c)] If $\alpha_+=0$, then we have the same domain of $I$ as in the case (a) (see Fig. \ref{fig:cvoradded}(c)).
 \item[(d)] If $\alpha_+>0$ (hence $x^*<0$), then the domain of $I$ is $[0,\Pi^{-1}(x^*)[$ and we consider the canard cycle $\Gamma_x$ for all $x\in ]0,\Pi^{-1}(x^*)[$ (see Fig. \ref{fig:cvoradded}(d)).
\end{enumerate}

System \eqref{3degreepolysys} has a hyperbolic saddle at the origin and a singularity at $(x,y)=(\frac{2}{\gamma_-},\frac{2}{\gamma_-})$ which is linearly zero (for more details see Fig. \ref{fig:contactrepnode}). The graph of the Poincar\'{e} half-map $\Pi$ approaches the invariant line $x=\frac{2}{\gamma_-}$.

\begin{figure}[htb]
	\begin{center}		\includegraphics[width=12.9cm,height=10.5cm]{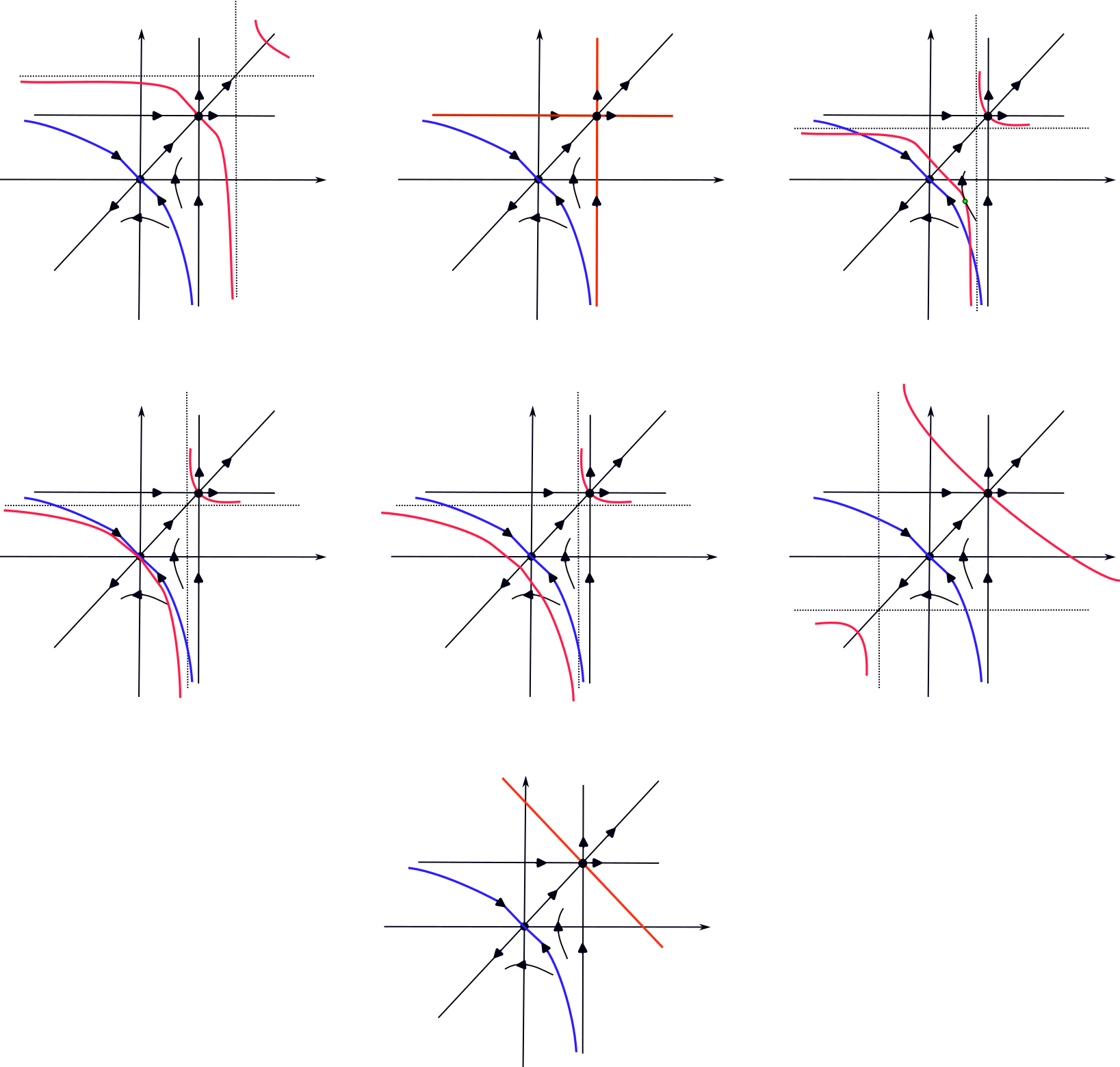}
		{\footnotesize
  \put(-314,295){\tiny{$x=\frac{2}{\gamma_-}$}}
   \put(-347,200){\footnotesize{(a) $\frac{2}{\gamma_-}<x^*$}}
   \put(-181,295){\tiny{$x=\frac{2}{\gamma_-}$}}
   \put(-213,200){\footnotesize{(b) $x^*=\frac{2}{\gamma_-}$}}
   \put(-50,295){\tiny{$x=\frac{2}{\gamma_-}$}}
    \put(-92,200){\footnotesize{(c) $\frac{1}{\gamma_-}<x^*<\frac{2}{\gamma_-}$}}
    \put(-308,185){\tiny{$x=\frac{2}{\gamma_-}$}}
    \put(-347,93){\footnotesize{(d) $x^*=\frac{1}{\gamma_-}$}}
   \put(-182,185){\tiny{$x=\frac{2}{\gamma_-}$}}
    \put(-213,93){\footnotesize{(e) $0<x^*<\frac{1}{\gamma_-}$}}
   \put(-51,185){\tiny{$x=\frac{2}{\gamma_-}$}}
    \put(-92,93){\footnotesize{(f) $x^*<0$}}
   \put(-186,79){\tiny{$x=\frac{2}{\gamma_-}$}}
   \put(-213,-15){\footnotesize{(g) $\alpha_+=0$}}
}
         \end{center}
	\caption{The phase portrait of \eqref{3degreepolysys} for $\beta_->0$, $\gamma_->0$ and $\gamma_-^2-4\beta_-=0$, with the curve $\overline \Delta(x,y)=0$ (red). The part of the blue curve located in the fourth quadrant is the graph of $\Pi$. We draw $x=x^*$ and $y=x^*$ using dashed lines. We draw the contact point $(x,y)=(x_C,-x_C)$ when $x_C$ is positive and contained in the domain of $I$ (Fig. \ref{fig:contactrepnode}(c)).}
	\label{fig:contactrepnode}
\end{figure}

\begin{theorem}\label{thm-repeatedeigen}
Suppose that $\beta_->0$, $\gamma_->0$ and $\gamma_-^2-4\beta_-=0$. Then the following statements are true.
\begin{enumerate}
    \item{($\alpha_+<0$)} If $\frac{2}{\gamma_-}<x^*$ (Fig. \ref{fig:contactrepnode}(a)), then $ I<0$ on $]0,\frac{2}{\gamma_-}[$ and, for any small $\theta>0$, $\cycl(\cup_{x\in[\theta,\frac{2}{\gamma_-}-\theta]}\Gamma_x)=1$ (the limit cycle is attracting).
   \item{($\alpha_+<0$)} If $x^*=\frac{2}{\gamma_-}$ (Fig. \ref{fig:contactrepnode}(b)), then $I<0$ on $]0,\frac{2}{\gamma_-}[$ and, for any small $\theta>0$, $\cycl(\cup_{x\in[\theta,\frac{2}{\gamma_-}-\theta]}\Gamma_x)=1$ (the limit cycle is attracting).
     \item{($\alpha_+<0$)} If $\frac{1}{\gamma_-}<x^*<\frac{2}{\gamma_-}$ (Fig. \ref{fig:contactrepnode}(c)), then the function $I$ has precisely $1$ zero counting multiplicity on $]0,x^*[$ and, for any sufficiently small $\theta>0$, $\cycl(\cup_{x\in[\theta,x^*-\theta]}\Gamma_x)=2$.
      \item{($\alpha_+<0$)} If $x^*=\frac{1}{\gamma_-}$ (Fig. \ref{fig:contactrepnode}(d)), then $I>0$ on $]0,x^*[$ and, for any small $\theta>0$, $\cycl(\cup_{x\in[\theta,x^*-\theta]}\Gamma_x)=1$ (the limit cycle is repelling).
      \item{($\alpha_+<0$)} If $0<x^*<\frac{1}{\gamma_-}$ (Fig. \ref{fig:contactrepnode}(e)), then $I>0$ on $]0,x^*[$ and, for any small $\theta>0$, $\cycl(\cup_{x\in[\theta,x^*-\theta]}\Gamma_x)=1$ (the limit cycle is repelling).
        \item{($\alpha_+>0$)} If $x^*<0$ (Fig. \ref{fig:contactrepnode}(f)), then $ I<0$ on $]0,\Pi^{-1}(x^*)[$ and, for any small $\theta>0$, $\cycl(\cup_{x\in[\theta,\Pi^{-1}(x^*)-\theta]}\Gamma_x)=1$ (the limit cycle is attracting).
         \item If $\alpha_+=0$ (Fig. \ref{fig:contactrepnode}(g)), then $I<0$ on $]0,\frac{2}{\gamma_-}[$ and, for any small $\theta>0$, $\cycl(\cup_{x\in[\theta,\frac{2}{\gamma_-}-\theta]}\Gamma_x)=1$ (the limit cycle is attracting).
    
\end{enumerate}
\end{theorem}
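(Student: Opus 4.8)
The plan is to follow the strategy already used for the saddle case (Theorem~\ref{theorem-saddle}) and the distinct-eigenvalue node case (Theorem~\ref{thm-distinctnode}), adapting it to the degenerate geometry that arises when $\gamma_-^2-4\beta_-=0$. First I would record the basic data. Since $\beta_-=\gamma_-^2/4$, the polynomial factors as $V(x)=\left(\tfrac{\gamma_-}{2}x-1\right)^2$, so its only (double) root is the merged value $x_L=x_R=\tfrac{2}{\gamma_-}$, and one checks immediately that $\tfrac{1}{\gamma_-}<\tfrac{2}{\gamma_-}$. The quantity $x_C$ in \eqref{contact-important} is well defined precisely when $x^*\ge\tfrac{1}{\gamma_-}$ (because $\beta_->0$). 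The sign data for the hyperbola $y=hp(x)$ comes directly from \eqref{deriv-hyper}: whenever $V(x^*)\ne 0$ we have $hp'(x)<0$ for all $x\ne x^*$ (since $\beta_->0$), while $hp(0)=\tfrac{\gamma_-x^*-1}{\beta_-x^*}$ changes sign as $x^*$ crosses $\tfrac{1}{\gamma_-}$.

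Next I would establish the analog of Lemma~\ref{lemma-nodedistinct}, namely: if $\tfrac{1}{\gamma_-}<x^*<\tfrac{2}{\gamma_-}$ then $0<x_C<x^*$; if $x^*=\tfrac{1}{\gamma_-}$ then $x_C=0$; and if $\tfrac{2}{\gamma_-}<x^*$ then $\tfrac{2}{\gamma_-}<x_C<x^*$. Each reduces, via $x_C^2=\tfrac{4(\gamma_-x^*-1)}{\gamma_-^2}$, to the elementary inequality $(\gamma_-x^*-2)^2>0$ for the bound $x_C<x^*$ and to monotone comparison for the remaining bounds. This pins down the only candidate contact points $(x_C,-x_C)$ and $(-x_C,x_C)$ produced by \eqref{contactpointsCase1}, with the factor $V(x)=0$ now contributing only the single merged point $x=\tfrac{2}{\gamma_-}$.

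With this in hand the case analysis proceeds exactly as before. For Statements~1, 2, 6 and 7 I would determine the sign of $hp(0)$, use the contact-point count to argue that $y=hp(x)$ (or the line/lines $\overline\Delta=0$ when $V(x^*)=0$ or $\alpha_+=0$) cannot cross $y=\Pi(x)$ on the relevant domain, read off that it lies above $\Pi$, and conclude $I<0$ from Lemma~\ref{lemma-below-above}.2 and Theorem~\ref{theorem-cyclicity-RHKK}.1; Statements~4 and 5 are the symmetric situations giving $hp$ below $\Pi$ and hence $I>0$. The substantive case is Statement~3 ($\tfrac{1}{\gamma_-}<x^*<\tfrac{2}{\gamma_-}$). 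Here, mirroring Theorem~\ref{thm-distinctnode}.5, I would show there is exactly one intersection of $y=hp(x)$ and $y=\Pi(x)$ on $]0,x^*[$: since $hp(0)>0$ the graph of $hp$ lies above $\Pi$ near $0$, while $hp(x)\to-\infty$ as $x\to x^*-$ with $\Pi(x^*)$ finite, so it lies below near $x^*$, giving at least one crossing by the Intermediate Value Theorem; a second crossing would produce, besides $(x_C,-x_C)$, an extra contact point with $x$-coordinate in $]0,x^*[$, contradicting the lemma. Rolle's theorem together with $I(0)=0$ then bounds the zeros of $I$ by one. To produce an actual zero I would split $\int_{\Pi(x)}^{x}\tfrac{u\,du}{X^{sl}(u)}=\int_{\Pi(x)}^{0}+\int_{0}^{x}$ and note that the first term stays bounded (as $\Pi(x^*)$ is finite) while the second diverges to $+\infty$ as $x\to x^*-$ because $X^{sl}(x^*)=0$ and $X^{sl}>0$ on $[0,x^*[$; combined with $I<0$ near $0$, the Intermediate Value Theorem yields a simple zero, and Theorem~\ref{theorem-cyclicity-RHKK}.3 gives cyclicity $2$.

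The main obstacle I anticipate is the degenerate singularity of system~\eqref{3degreepolysys} at $(\tfrac{2}{\gamma_-},\tfrac{2}{\gamma_-})$, which is linearly zero rather than a hyperbolic node. In the distinct-eigenvalue case the node structure cleanly guaranteed both that $y=\Pi(x)$ approaches the invariant vertical line $x=\tfrac{2}{\gamma_-}$ as an asymptote and that $\Pi(x^*)$ is finite for $x^*<\tfrac{2}{\gamma_-}$. Here I would need to justify the same qualitative picture (Fig.~\ref{fig:contactrepnode}) directly, for instance by a blow-up of the degenerate equilibrium or by exploiting the invariant line $x=\tfrac{2}{\gamma_-}$ together with the integral characterization \eqref{intcharact} of $\Pi$, so as to be certain that the asymptotic behavior invoked in Statement~3 (finiteness of $\Pi(x^*)$ and $hp(x)\to-\infty$) remains valid and that no additional contact points are concealed near this degenerate point.
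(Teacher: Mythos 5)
Your proposal is correct and follows essentially the same route as the paper: the same analogue of Lemma~\ref{lemma-nodedistinct} (the paper's Lemma~\ref{lemma-noderepeated} for $x_C$), the same above/below comparison of $\overline\Delta=0$ with the graph of $\Pi$ via Lemma~\ref{lemma-below-above}.2 for Statements 1, 2 and 4--7, and the same IVT/contact-point/Rolle argument plus the divergence of $\int_0^x u\,du/X^{sl}(u)$ as $x\to x^*-$ for Statement 3, all imported from Theorems~\ref{theorem-saddle} and~\ref{thm-distinctnode}. The degenerate-equilibrium issue you flag is not re-examined in the paper either: it relies on the integral characterization \eqref{intcharact} and on \cite{Carmona} for the domain $[0,\tfrac{2}{\gamma_-}[$ of $\Pi$ and for $x=\tfrac{2}{\gamma_-}$ being its vertical asymptote, so no blow-up of the linearly zero singularity of \eqref{3degreepolysys} is carried out.
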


\begin{proof}
Let $\beta_->0$, $\gamma_->0$ and $\gamma_-^2-4\beta_-=0$. From \eqref{intersect-pointsRL} it follows that
$x_L=x_R=\frac{2}{\gamma_-}$ and $V(\frac{2}{\gamma_-})=0$. The graph of $V$ is concave up and $V(x)>0$ for all $x\ne \frac{2}{\gamma_-}$.
 Using \eqref{contact-important} and $\gamma_-^2-4\beta_-=0$ we have $x_C=\frac{2}{\gamma_-}\sqrt{\gamma_-x^*-1}$.

  \begin{lemma}\label{lemma-noderepeated}
Suppose that $\beta_->0$, $\gamma_->0$, $\gamma_-^2-4\beta_-=0$, $\alpha_+\ne 0$ and $V(x^*)\ne 0$ (i.e. $x^*\ne \frac{2}{\gamma_-}$). Then the following statements are true.
\begin{enumerate}
\item If $\frac{2}{\gamma_-}<x^*$, then $\frac{2}{\gamma_-}<x_C<x^*$.
    \item If $\frac{1}{\gamma_-}<x^*<\frac{2}{\gamma_-} $, then $0< x_C<x^*$.
    \item If $x^*= \frac{1}{\gamma_-}$, then $x_C=0$.
\end{enumerate}
\end{lemma}
Now, we prove the statements of Theorem \ref{thm-repeatedeigen}.\\
\\
\textit{Statement 1.} Suppose that $\frac{2}{\gamma_-}<x^*$. From \eqref{deriv-hyper} it follows that $hp(0)>0$ and $hp'(x)<0$ for all $x\ne x^*$. The graph of $hp$ is given in Fig. \ref{fig:contactrepnode}(a). 
Since $\frac{2}{\gamma_-}<x^*$, the domain of $I$ is $[0,\frac{2}{\gamma_-}[$ (see Fig. \ref{fig:cvoradded}(a)). The proof now proceeds in a similar fashion to the proof of Theorem \ref{thm-distinctnode}.1.  \\
\\
\textit{Statement 2.}  Suppose that $x^*=\frac{2}{\gamma_-}$. The domain of $I$ is $[0,\frac{2}{\gamma_-}[$ (see Fig. \ref{fig:cvoradded}(a)). The proof of Statement 2 is similar to the proof of Theorem \ref{thm-distinctnode}.2 or Theorem \ref{thm-distinctnode}.4.\\
\\
\textit{Statement 3.} Suppose that $\frac{1}{\gamma_-}<x^*<\frac{2}{\gamma_-}$. From \eqref{deriv-hyper} it follows that $hp(0)>0$ and $hp'(x)<0$ for all $x\ne x^*$. The graph of $hp$ is given in Fig. \ref{fig:contactrepnode}(c). 
The domain of $I$ is $[0,x^*[$ (see Fig. \ref{fig:cvoradded}(b)). The proof is now analogous to the proof of Theorem \ref{thm-distinctnode}.5. Instead of Lemma \ref{lemma-nodedistinct}.3 we use Lemma \ref{lemma-noderepeated}.2 and find the following contact points: $(x,y)=(\frac{2}{\gamma_-},\frac{2}{\gamma_-})$, $(x,y)=(x_C,-x_C)$ and $(x,y)=(-x_C,x_C)$, with $0<x_C<x^*$.  \\
\\
\textit{Statement 4.} Suppose that $x^*=\frac{1}{\gamma_-}$. We have $hp(0)=0$ and $hp'(x)<0$ for all $x\ne x^*$ (see Fig. \ref{fig:contactrepnode}(d)). The domain of $I$ is $[0,x^*[$ (see Fig. \ref{fig:cvoradded}(b)). The proof is similar to the proof of Theorem \ref{thm-distinctnode}.6. Using Lemma \ref{lemma-noderepeated}.3 the contact points are $(x,y)=(\frac{2}{\gamma_-},\frac{2}{\gamma_-})$ and $(x,y)=(0,0)$.  \\
\\
\textit{Statement 5.} Suppose that $0<x^*<\frac{1}{\gamma_-}$. We have $hp(0)<0$ and $hp'(x)<0$ for all $x\ne x^*$ (see Fig. \ref{fig:contactrepnode}(e)). The domain of $I$ is $[0,x^*[$ (see Fig. \ref{fig:cvoradded}(b)). The proof is similar to the proof of Theorem \ref{thm-distinctnode}.7. We have $1$ contact point: $(x,y)=(\frac{2}{\gamma_-},\frac{2}{\gamma_-})$.  \\
\\
\textit{Statement 6.} Suppose that $x^*<0$. We have $hp(0)>0$ and $hp'(x)<0$ for all $x\ne x^*$ (see Fig. \ref{fig:contactrepnode}(f)). The domain of $I$ is $[0,\Pi^{-1}(x^*)[$ (see Fig. \ref{fig:cvoradded}(d)). The proof is analogous to the proof of Theorem \ref{thm-distinctnode}.8.  \\
\\
\textit{Statement 7.} Suppose that $\alpha_+=0$. The curve $\overline \Delta=0$ is given by \eqref{line-equation}: $y=\frac{4}{\gamma_-}-x$. We refer to Fig. \ref{fig:contactrepnode}(g).
The domain of $I$ is $[0,\frac{2}{\gamma_-}[$ (Fig. \ref{fig:cvoradded}(c)). The proof is analogous to the proof of Theorem \ref{thm-distinctnode}.9. 
\end{proof}

\subsection{The focus case}\label{focus-subsection}

\begin{figure}[htb]
	\begin{center}		\includegraphics[width=12.4cm,height=3.5cm]{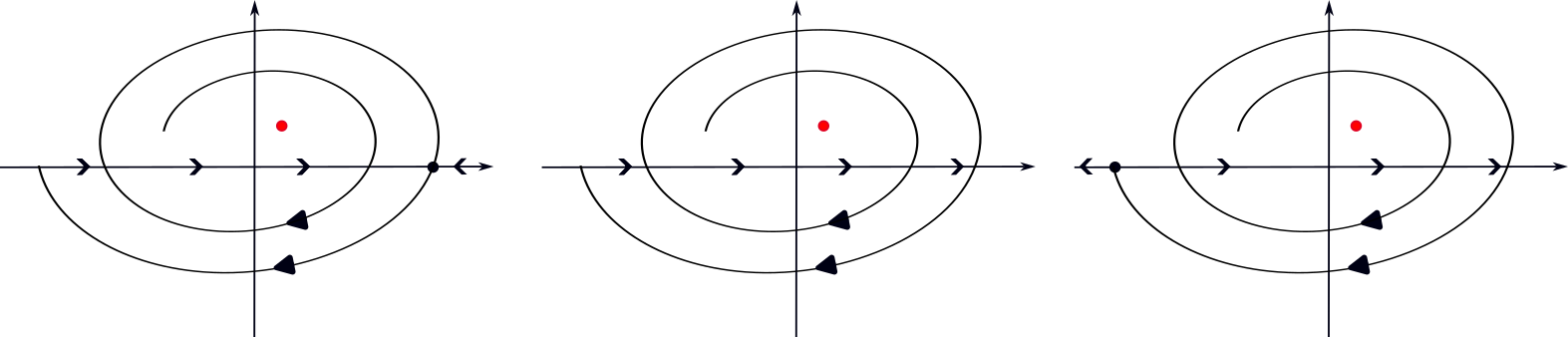}
		{\footnotesize
   \put(-258,42){\footnotesize{$x^*$}}
   \put(-313,-10){\footnotesize{(a) $0<x^*$}}
   \put(-197,-10){\footnotesize{(b) $\alpha_+=0$}}
   \put(-104,42){\footnotesize{$x^*$}}
   \put(-70,-10){\footnotesize{(c) $x^*<0$}}
}
         \end{center}
	\caption{Phase portraits of $Z^-$ defined in \eqref{linearPWS} and the direction of the sliding vector field \eqref{sliding-linear} along $y=0$, for $\beta_->0$, $\gamma_->0$ and $\gamma_-^2-4\beta_-<0$. $Z^-$ has a repelling focus. We do not draw the corresponding phase portraits of $Z^+$.}
	\label{fig:foc-cent}
\end{figure}

Here we suppose that $\beta_->0$, $\gamma_->0$ and $\gamma_-^2-4\beta_-<0$. System $Z^-$ has a repelling focus at $(x,y)=(\frac{\gamma_-}{\beta_-},\frac{1}{\beta_-})$.
 We refer to Lemma \ref{lemma-LRkappa} and Fig. \ref{fig:foc-cent}.
The domain and image of $\Pi$ are respectively $[0,+\infty[$ and $]-\infty,0]$ (see also \cite{Carmona}). The domain of the slow divergence integral $I$ (or $\tilde I$) depends on $x^*$ and we have $3$ cases.
\begin{enumerate}
    \item[(a)] If $\alpha_+<0$ (hence $x^*>0$), then the domain of $I$ is $[0,x^*[$ and we consider the canard cycle $\Gamma_x$ for all $x\in ]0,x^*[$ (see Fig. \ref{fig:foc-cent}(a)).
    \item[(b)] If $\alpha_+=0$, then the domain of $I$ is $[0,+\infty[$ and we consider the canard cycle $\Gamma_x$ for all $x\in ]0,+\infty[$ (see Fig. \ref{fig:foc-cent}(b)).
\item[(c)]  If $\alpha_+>0$ (hence $x^*<0$), then the domain of $I$ is $[0,\Pi^{-1}(x^*)[$ and we consider the canard cycle $\Gamma_x$ for all $x\in ]0,\Pi^{-1}(x^*)[$ (see Fig. \ref{fig:foc-cent}(c)).
\end{enumerate}

System \eqref{3degreepolysys} has a hyperbolic saddle at the origin (Fig. \ref{fig:contactfocus}). 

\begin{figure}[htb]
	\begin{center}		\includegraphics[width=12cm,height=8.3cm]{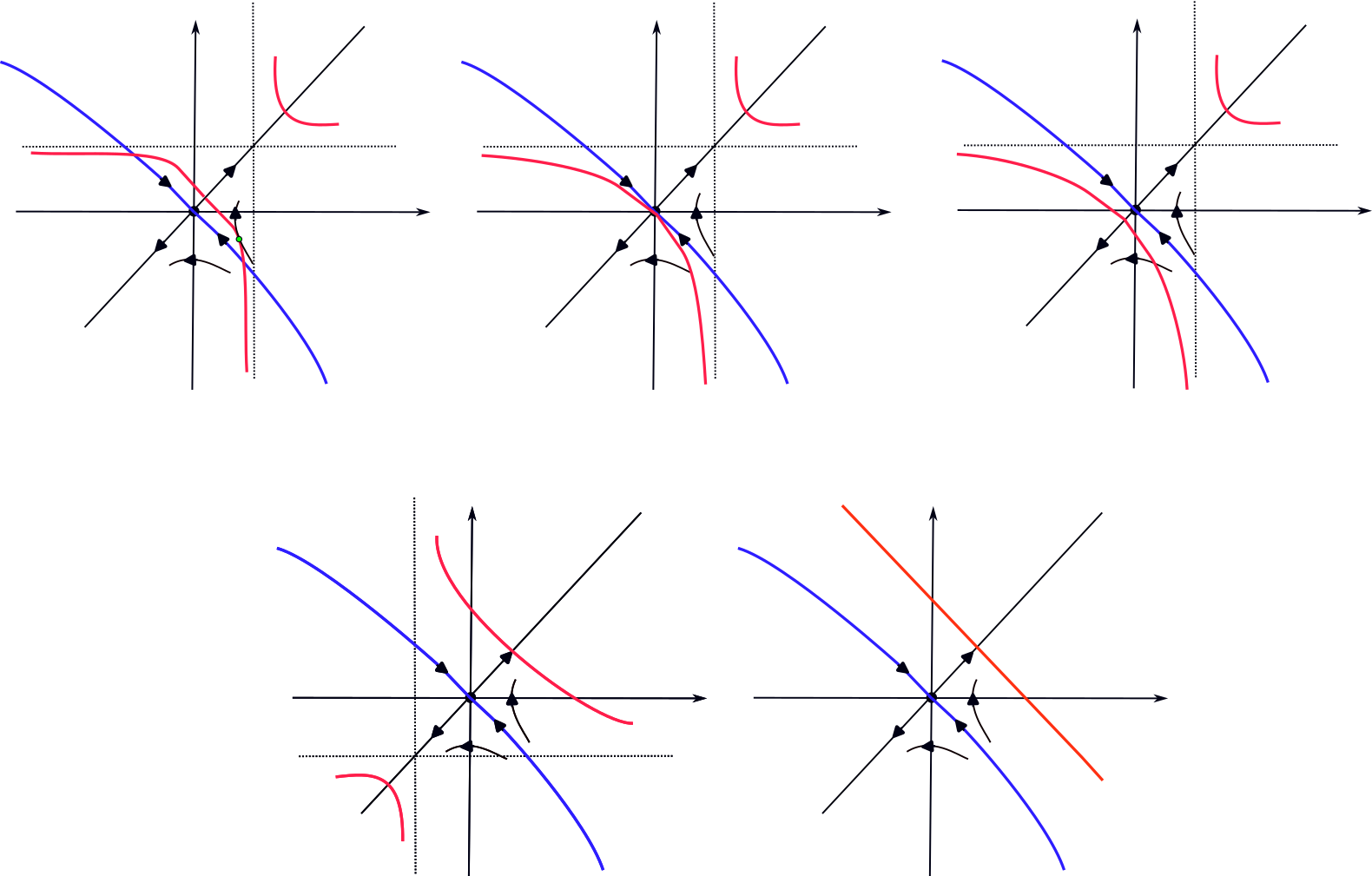}
		{\footnotesize
   \put(-319,120){\footnotesize{(a) $\frac{1}{\gamma_-}<x^*$}}
   \put(-203,120){\footnotesize{(b) $x^*=\frac{1}{\gamma_-}$}}
    \put(-90,120){\footnotesize{(c) $0<x^*<\frac{1}{\gamma_-}$}}
    \put(-248,-10){\footnotesize{(d) $x^*<0$}}
    \put(-133,-10){\footnotesize{(e) $\alpha_+=0$}}
}
         \end{center}
	\caption{The phase portrait of \eqref{3degreepolysys} for $\beta_->0$, $\gamma_->0$ and $\gamma_-^2-4\beta_-<0$, with the curve $\overline \Delta(x,y)=0$ (red). The part of the blue curve located in the fourth quadrant is the graph of $\Pi$. We draw $x=x^*$ and $y=x^*$ using dashed lines. We indicate the contact point $(x,y)=(x_C,-x_C)$ when $x_C$ is positive and contained in the domain of $I$ (Fig. \ref{fig:contactfocus}(a)).}
	\label{fig:contactfocus}
\end{figure}

\begin{theorem}\label{thm-focuscase}
Suppose that $\beta_->0$, $\gamma_->0$ and $\gamma_-^2-4\beta_-<0$. Then the following statements are true.
\begin{enumerate}
     \item{($\alpha_+<0$)} If $\frac{1}{\gamma_-}<x^*$ (Fig. \ref{fig:contactfocus}(a)), then the function $I$ has precisely $1$ zero counting multiplicity on $]0,x^*[$ and, for any sufficiently small $\theta>0$, $\cycl(\cup_{x\in[\theta,x^*-\theta]}\Gamma_x)=2$.
      \item{($\alpha_+<0$)} If $x^*=\frac{1}{\gamma_-}$ (Fig. \ref{fig:contactfocus}(b)), then $I>0$ on $]0,x^*[$ and, for any small $\theta>0$, $\cycl(\cup_{x\in[\theta,x^*-\theta]}\Gamma_x)=1$ (the limit cycle is repelling).
      \item{($\alpha_+<0$)} If $0<x^*<\frac{1}{\gamma_-}$ (Fig. \ref{fig:contactfocus}(c)), then $I>0$ on $]0,x^*[$ and, for any small $\theta>0$, $\cycl(\cup_{x\in[\theta,x^*-\theta]}\Gamma_x)=1$ (the limit cycle is repelling).
        \item{($\alpha_+>0$)} If $x^*<0$ (Fig. \ref{fig:contactfocus}(d)), then $ I<0$ on $]0,\Pi^{-1}(x^*)[$ and, for any small $\theta>0$, $\cycl(\cup_{x\in[\theta,\Pi^{-1}(x^*)-\theta]}\Gamma_x)=1$ (the limit cycle is attracting).
         \item If $\alpha_+=0$ (Fig. \ref{fig:contactfocus}(e)), we have $I<0$ on $]0,\infty[$ and, for any small $\theta>0$, $\cycl(\cup_{x\in[\theta,\frac{1}{\theta}]}\Gamma_x)=1$ (the limit cycle is attracting). 
    
\end{enumerate}
\end{theorem}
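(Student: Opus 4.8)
The plan is to follow the same scheme as in the saddle and node cases. By \eqref{equivfunctions} the zeros of $\tilde I'$ coincide with those of $\Delta(x)=\overline\Delta(x,\Pi(x))$, and by Lemma \ref{lemma-below-above}.2 (recall $\beta_->0$ here) the sign of $\Delta$ on an interval is dictated by whether the curve $\overline\Delta(x,y)=0$ lies above or below the graph of $\Pi$ there. So for each of the five configurations of $x^*$ I would first locate $\overline\Delta=0$ --- a branch of the hyperbola $y=hp(x)$ from \eqref{hyperbola} when $\alpha_+\ne0$, or the line \eqref{line-equation} when $\alpha_+=0$ --- using $hp(0)$ and $hp'$ from \eqref{deriv-hyper}, and then compare it with $y=\Pi(x)$ on the relevant domain of $I$ read off from Fig. \ref{fig:foc-cent}. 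The decisive simplification special to the focus case is that $\gamma_-^2-4\beta_-<0$ forces $V>0$ on all of $\mathbb R$: there are no real zeros $x_L,x_R$, so by \eqref{contactpointsCase1} and \eqref{contactpointsCase2} the only possible contact points between orbits of \eqref{3degreepolysys} and $\overline\Delta=0$ are $(x_C,-x_C)$ and $(-x_C,x_C)$ on $y=-x$, and these exist precisely when $x^*\ge \tfrac1{\gamma_-}$ (when $\alpha_+\ne 0$); when $\alpha_+=0$ there are none at all.

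Statements 2--5 I would dispatch as their node-case analogues. For $0<x^*<\tfrac1{\gamma_-}$ (Statement 3) one has $hp(0)<0$, $hp'<0$, and a vertical asymptote at $x=x^*$, so $hp$ lies below $\Pi$ at both ends of $]0,x^*[$; since there are no contact points there, $hp$ cannot cross $\Pi$ and stays below throughout, giving $\Delta>0$ and $I>0$ by Lemma \ref{lemma-below-above}.2, exactly as in Theorem \ref{thm-distinctnode}.7. The borderline $x^*=\tfrac1{\gamma_-}$ (Statement 2) is identical once one checks $hp(0)=0$ and $x_C=0$, so that the origin is the only contact point and the near-origin comparison is made as in Theorem \ref{theorem-saddle}.2. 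For $x^*<0$ (Statement 4) the branch of $hp$ on $]0,\Pi^{-1}(x^*)[$ decreases from $hp(0)>0$ toward its horizontal asymptote $y=x^*$ and hence stays above $\Pi$ (no contact points to permit a crossing), so $\Delta<0$ and $I<0$, as in Theorem \ref{thm-distinctnode}.8. For $\alpha_+=0$ (Statement 5) the curve $\overline\Delta=0$ is the line \eqref{line-equation}; the cleanest route is the direct evaluation $\Delta(x)=(B-\delta_+)\big(\beta_-(x+\Pi(x))-\gamma_-\big)$, which is negative because \eqref{intcharact} together with $\gamma_->0$ forces $|\Pi(x)|>x$, hence $x+\Pi(x)<0$; equivalently $X^{sl}$ is a positive constant and $I$ is a positive multiple of $x^2-\Pi(x)^2<0$. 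In each of these four cases $I$ has constant sign and Theorem \ref{theorem-cyclicity-RHKK}.1 gives cyclicity $1$.

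The substantive case is Statement 1, $\tfrac1{\gamma_-}<x^*$, and I would treat it as in Theorem \ref{thm-distinctnode}.5. First I would record the focus analogue of Lemma \ref{lemma-nodedistinct}: from \eqref{contact-important}, $x_C^2=\tfrac{\gamma_-x^*-1}{\beta_-}>0$, and $x_C^2<(x^*)^2$ is equivalent to $V(x^*)=\beta_-(x^*)^2-\gamma_-x^*+1>0$, which is automatic in the focus case, so $0<x_C<x^*$. Here $hp(0)>0$, $hp'<0$, and $hp(x)\to-\infty$ as $x\to x^{*-}$ while $\Pi(x^*)$ is finite, so $hp$ lies above $\Pi$ near $0$ and below near $x^*$; the Intermediate-Value Theorem gives at least one intersection. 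Ruling out a second is where the work lies: two (or more, counting multiplicity) intersections of the orbit $\Pi$ with $\overline\Delta=0$ would, by the transversality/arc argument used throughout Section \ref{saddle-subsection}, force a contact point of the flow with $\overline\Delta=0$ in the fourth quadrant with $x$-coordinate in $]0,x^*[$ besides $(x_C,-x_C)$ --- impossible, since $(x_C,-x_C)$ is the only such point. Hence $\Delta$ (equivalently $\tilde I'$) has exactly one zero counting multiplicity, and Rolle together with $\tilde I(0)=0$ bounds the zeros of $I$ by one. Finally, to see that this zero is realized I would split $\int_{\Pi(x)}^{x}\tfrac{u\,du}{X^{sl}(u)}=\int_{\Pi(x)}^{0}+\int_{0}^{x}$: the first stays bounded as $x\to x^{*-}$ while the second diverges to $+\infty$ because $X^{sl}(x^*)=0$, so $I>0$ near $x^*$, whereas $I<0$ near $0$ (from $hp$ above $\Pi$ and Lemma \ref{lemma-below-above}.2). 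Thus $I$ has a single simple zero and Theorem \ref{theorem-cyclicity-RHKK}.3 yields $\cycl(\cup_{x\in[\theta,x^*-\theta]}\Gamma_x)=2$.

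The main obstacle is exactly this contact-point bookkeeping in Statement 1 --- converting ``at most one intersection of $hp$ and $\Pi$'' into a rigorous count that excludes an interior tangency --- together with the care needed in the divergence argument near the vertical asymptote $x=x^*$. Everything else reduces to the monotonicity of $hp$ and a comparison of its asymptotes with $\Pi$ that is forced by the focus-case identity $V>0$, which is precisely what removes the $x_L,x_R$ contact points present in the saddle and node cases and thereby makes Statements 2--5 immediate.
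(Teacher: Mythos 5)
Your proposal is correct and follows essentially the same route as the paper: locate the curve $\overline\Delta=0$ via $hp$, use the fact that $V>0$ in the focus case to restrict the contact points of \eqref{3degreepolysys} with $\overline\Delta=0$ to $(\pm x_C,\mp x_C)$ (or none), compare with the graph of $\Pi$ via Lemma \ref{lemma-below-above}.2, and in Statement 1 combine the Intermediate-Value Theorem, the contact-point count, and the divergence of $\int_0^x u\,du/X^{sl}(u)$ as $x\to x^{*-}$ exactly as in Theorem \ref{thm-distinctnode}.5. The only (harmless) deviation is your direct evaluation of $\Delta$ in Statement 5 using $x+\Pi(x)<0$, where the paper instead repeats the geometric above/below argument; both work, though your shortcut still requires justifying $|\Pi(x)|>x$ from \eqref{intcharact} and $\gamma_->0$.
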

\begin{proof}
 We suppose that $\beta_->0$, $\gamma_->0$ and $\gamma_-^2-4\beta_-<0$. Since $V(x)>0$ for all $x\in \mathbb R$, then, for $\alpha_+\ne 0$, we have $hp'(x)<0$ for all $x\ne x^*$ (see \eqref{deriv-hyper}). From  \eqref{contact-important} it follows that $x_C\ge 0$ is well-defined for $x^*\ge\frac{1}{\gamma_-}$. If $x^*=\frac{1}{\gamma_-}$, then $x_C=0$, and, if $x^*>\frac{1}{\gamma_-}$, then $0<x_C<x^*$.

 \textit{Statement 1.} Suppose that $\frac{1}{\gamma_-}<x^*$. From \eqref{deriv-hyper} it follows that $hp(0)>0$. The graph of $hp$ is given in Fig. \ref{fig:contactfocus}(a). The contact points between the orbits of system \eqref{3degreepolysys} and the hyperbola $y=hp(x)$ are $(x,y)=(x_C,-x_C)$ and $(x,y)=(-x_C,x_C)$, with $0<x_C<x^*$.
The domain of $I$ is $[0,x^*[$ (see Fig. \ref{fig:foc-cent}(a)). The proof is similar to the proof of Theorem \ref{thm-distinctnode}.5.  \\
\\
\textit{Statement 2.} Suppose that $x^*=\frac{1}{\gamma_-}$. We have $hp(0)=0$ (see Fig. \ref{fig:contactfocus}(b)). The domain of $I$ is $[0,x^*[$ (see Fig. \ref{fig:foc-cent}(a)). We have $1$ contact point: $(x,y)=(0,0)$. We can show that the graph of $hp$ lies below the graph of $\Pi$ for $x\in ]0,x^*[$ using the same idea as in the proof of Theorem  \ref{theorem-saddle}.2. Then the result follows from Lemma \ref{lemma-below-above}.2 and Theorem \ref{theorem-cyclicity-RHKK}.1.\\
\\
\textit{Statement 3.} Suppose that $0<x^*<\frac{1}{\gamma_-}$. We have $hp(0)<0$. The graph of $hp$ is given in Fig. \ref{fig:contactfocus}(c). There are no contact points between the orbits of system \eqref{3degreepolysys} and $y=hp(x)$.

The domain of $I$ is $[0,x^*[$ (see Fig. \ref{fig:foc-cent}(a)). Again, we can show that the graph of $hp$ lies below the graph of $\Pi$ for $x\in ]0,x^*[$ using the same idea as in the proof of Theorem  \ref{theorem-saddle}.1. Then the result easily follows from Lemma \ref{lemma-below-above}.2 and Theorem \ref{theorem-cyclicity-RHKK}.1.\\
\\
\textit{Statement 4.} Suppose that $x^*<0$. Then $hp(0)>0$ and the graph of $hp$ is given in Fig. \ref{fig:contactfocus}(d). There are no contact points between the orbits of system \eqref{3degreepolysys} and $y=hp(x)$.
The domain of $I$ is $[0,\Pi^{-1}(x^*)[$ (Fig. \ref{fig:foc-cent}(c)). 

Let us prove that the graph of $hp$ lies above the graph of $\Pi$ for $x\in ]0,\Pi^{-1}(x^*)[$. Suppose that there is an intersection between the graph of $hp$ and the graph of $\Pi$ for $x\in ]0,\Pi^{-1}(x^*)[$. This implies the existence of a contact point between the orbits of system \eqref{3degreepolysys} and $y=hp(x)$. This gives a contradiction. Statement $4$ follows now from Lemma \ref{lemma-below-above}.2 and Theorem \ref{theorem-cyclicity-RHKK}.1.\\
\\
\textit{Statement 5.} Assume that $\alpha_+=0$. The graph of  \eqref{line-equation} is given in  Fig. \ref{fig:contactfocus}(e). There are no contact points between the orbits of system \eqref{3degreepolysys} and $y=hp(x)$ because the equation in \eqref{contactpointsCase2} has no solutions.
The domain of $I$ is $[0,+\infty[$ (Fig. \ref{fig:foc-cent}(b)). Again, we can show that the graph of $hp$ lies above the graph of $\Pi$ for $x\in ]0,+\infty[$ (see the proof of Statement $4$). Then Lemma \ref{lemma-below-above}.2 and Theorem \ref{theorem-cyclicity-RHKK}.1 imply the result.

\end{proof}

\appendix

\section{The center case}\label{subsection-centerappen}
\begin{figure}[htb]
	\begin{center}		\includegraphics[width=11.9cm,height=3.3cm]{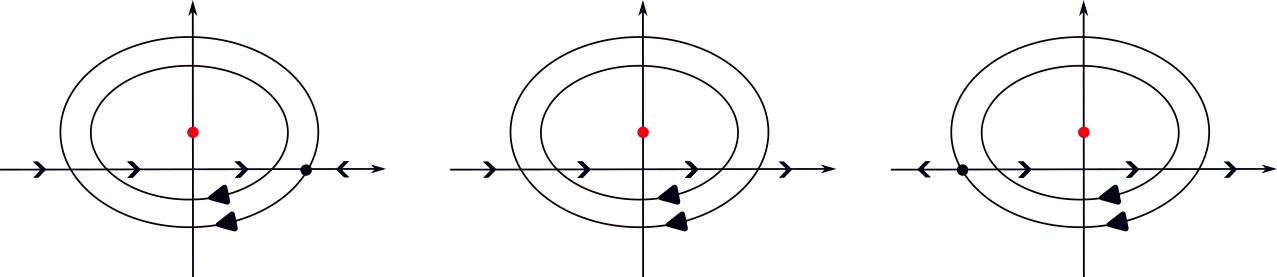}
		{\footnotesize
   \put(-258,27){\footnotesize{$x^*$}}
   \put(-306,-12){\footnotesize{(a) $0<x^*$}}
   \put(-93,27){\footnotesize{ $x^*$}}
    \put(-192,-12){\footnotesize{(b) $\alpha_+=0$}}
     \put(-75,-12){\footnotesize{(c) $x^*<0$}}
}
         \end{center}
	\caption{Phase portraits of $Z^-$ defined in \eqref{linearPWS} and the direction of the sliding vector field \eqref{sliding-linear} along $y=0$, for $\beta_->0$ and $\gamma_-=0$. $Z^-$ has a center.  We do not draw the corresponding phase portraits of $Z^+$.}
	\label{fig:centonly}
\end{figure}
We suppose that $\beta_->0$ and $\gamma_-=0$. System $Z^-$ has a center at $(x,y)=(0,\frac{1}{\beta_-})$.
 We refer to Fig. \ref{fig:centonly}.
The domain and image of $\Pi$ are respectively $[0,+\infty[$ and $]-\infty,0]$. We have $3$ cases.
\begin{enumerate}
    \item[(a)] If $\alpha_+<0$ (hence $x^*>0$), then the domain of $I$ is $[0,x^*[$ (see Fig. \ref{fig:centonly}(a)).
We have $ I>0$ on $]0,x^*[$, and, for any small $\theta>0$,
 $\cycl(\cup_{x\in[\theta,x^*-\theta]}\Gamma_x)=1$. The limit cycle is repelling (Theorem \ref{thm-mainpart1}.2).
    \item[(b)] If $\alpha_+=0$, then the domain of $I$ is $[0,+\infty[$ (see Fig. \ref{fig:centonly}(b)). Since $(\alpha_+,\gamma_-)=(0,0)$, we have $I\equiv 0$.
\item[(c)]  If $\alpha_+>0$ (hence $x^*<0$), then the domain of $I$ is $[0,\Pi^{-1}(x^*)[$ (see Fig. \ref{fig:centonly}(c)). We have $ I<0$ on $]0,\Pi^{-1}(x^*)[$, and, for any small $\theta>0$,
 $\cycl(\cup_{x\in[\theta,\Pi^{-1}(x^*)-\theta]}\Gamma_x)=1$ and the limit cycle is attracting (Theorem \ref{thm-mainpart1}.2).
\end{enumerate}

\begin{remark}
    When $\beta_-<0$ and $\gamma_-=0$, system $Z^-$ has a hyperbolic saddle at $(x,y)=(0,\frac{1}{\beta_-})$. In this case we can find the domain of $I$ in the same way as in Section \ref{saddle-subsection} (see Fig. \ref{fig:sedlo}) and then apply Theorem \ref{thm-mainpart1}.2 when $\alpha_+\ne 0$ (see the center case).
\end{remark}

\section{The case without singularities}\label{nosingularities-subsection}
Here we suppose that $\beta_-=0$ and $\gamma_-\ge 0$. Then system $Z^-$ has no singularities. If $\gamma_->0$, then the line 
$x=\gamma_-y+\frac{1}{\gamma_-}$ is invariant w.r.t. $Z^-$ (see Fig. \ref{fig:nosing}(a)--(d) and Lemma \ref{lemma-LRkappa}.3). The domain and image of $\Pi$ 
are respectively $[0,\frac{1}{\gamma_-}[$ and $]-\infty,0]$. When $\gamma_-=0$, the domain and image of $\Pi$ are respectively $[0,+\infty[$ and $]-\infty,0]$. 
We refer to Fig. \ref{fig:nosing}(e)--(g).

\begin{figure}[htb]
	\begin{center}		\includegraphics[width=12.4cm,height=6.5cm]{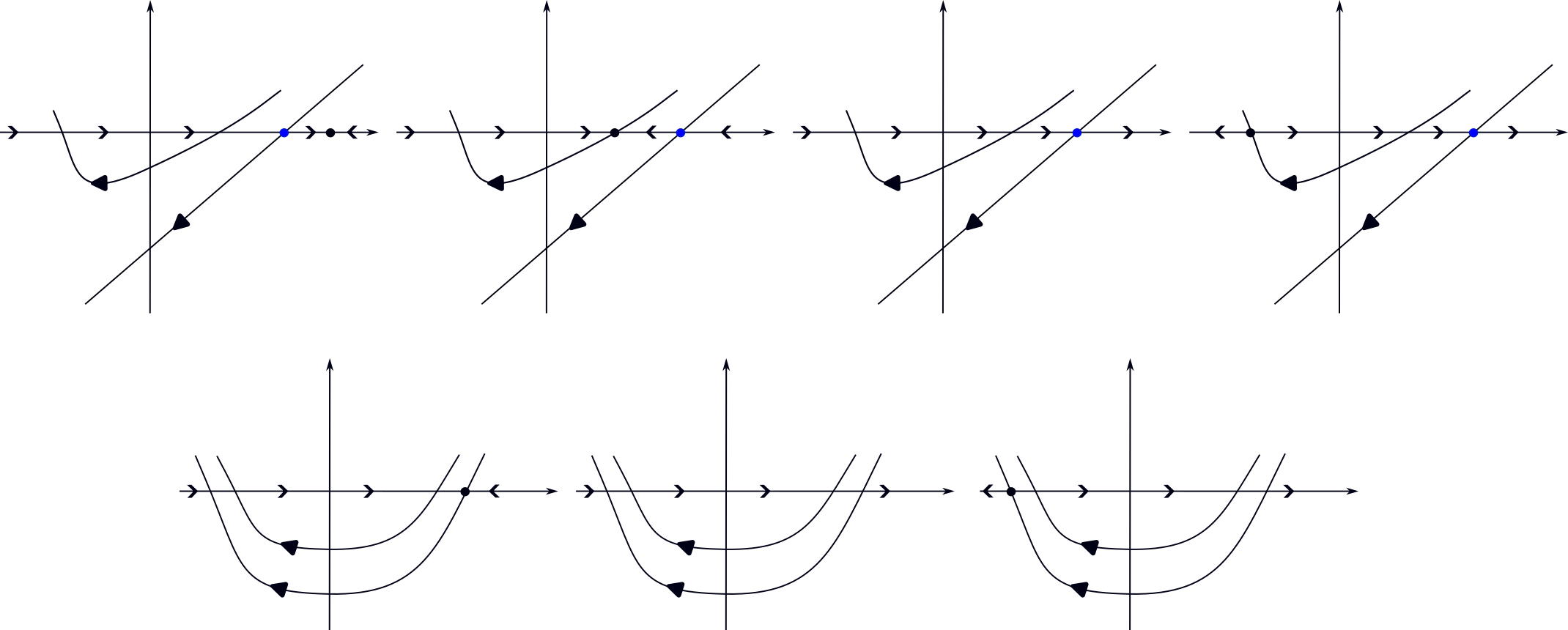}
		{\footnotesize
   \put(-281,137){\footnotesize{$x^*$}}
   \put(-295,137){\footnotesize{$\frac{1}{\gamma_-}$}}
   \put(-338,83){\footnotesize{(a) $\frac{1}{\gamma_-}\le x^*$}}
   \put(-219,137){\footnotesize{$x^*$}}
   \put(-203,137){\footnotesize{$\frac{1}{\gamma_-}$}}
   \put(-254,83){\footnotesize{(b) $0<x^*<\frac{1}{\gamma_-}$}}
   \put(-114,137){\footnotesize{$\frac{1}{\gamma_-}$}}
   \put(-160,83){\footnotesize{(c) $\alpha_+=0$}}
   \put(-75,137){\footnotesize{$x^*$}}
   \put(-24,137){\footnotesize{$\frac{1}{\gamma_-}$}}
   \put(-70,83){\footnotesize{(d) $x^*<0$}}
   \put(-252,33){\footnotesize{$x^*$}}
   \put(-300,-12){\footnotesize{(e) $0<x^*$}}
   \put(-130,33){\footnotesize{ $x^*$}}
    \put(-205,-12){\footnotesize{(f) $\alpha_+=0$}}
     \put(-112,-12){\footnotesize{(g) $x^*<0$}}
}
         \end{center}
	\caption{Phase portraits of $Z^-$, with $\beta_-=0$, defined in \eqref{linearPWS} and the direction of the sliding vector field \eqref{sliding-linear} along $y=0$. (a)--(d) $\gamma_->0$. (e)--(g) $\gamma_-=0$.  We do not draw the corresponding phase portraits of $Z^+$.}
	\label{fig:nosing}
\end{figure}
 We have
\begin{enumerate}
    \item[(a)] If $\gamma_->0$ and $\frac{1}{\gamma_-}\le x^*$, then the domain of $I$ is $[0,\frac{1}{\gamma_-}[$ (see Fig. \ref{fig:nosing}(a)). 
When $\frac{1}{\gamma_-}< x^*$, from Theorem \ref{thm-mainpart1}.1 it follows that 
$ I<0$ on $]0,\frac{1}{\gamma_-}[$. For any small $\theta>0$, 
 $\cycl(\cup_{x\in[\theta,\frac{1}{\gamma_-}-\theta]}\Gamma_x)=1$ and the limit cycle is attracting. When $x^*=\frac{1}{\gamma_-}$, then $I\equiv 0$ (see the paragraph after \eqref{condition-theorem-imp}).
    \item[(b)] If $\gamma_->0$ and $0<x^*<\frac{1}{\gamma_-}$, then the domain of $I$ is $[0,x^*[$ (see Fig. \ref{fig:nosing}(b)). Theorem \ref{thm-mainpart1}.1 implies that 
$ I>0$ on $]0,x^*[$, and, for any small $\theta>0$,
 $\cycl(\cup_{x\in[\theta,x^*-\theta]}\Gamma_x)=1$ (the limit cycle is repelling).
\item[(c)]  If $\gamma_->0$ and $\alpha_+=0$, then we have the same domain of $I$ as in the case (a) (see Fig. \ref{fig:nosing}(c)). 
Again, Theorem \ref{thm-mainpart1}.1 implies that 
$ I<0$ on $]0,\frac{1}{\gamma_-}[$.
\item[(d)]  If $\gamma_->0$ and $x^*<0$, then the domain of $I$ is $[0,\Pi^{-1}(x^*)[$ (see Fig. \ref{fig:nosing}(d)). We have $ I<0$ on 
$]0,\Pi^{-1}(x^*)[$ and, for any small $\theta>0$,
 we have $\cycl(\cup_{x\in[\theta,\Pi^{-1}(x^*)-\theta]}\Gamma_x)=1$ and the limit cycle is attracting (Theorem \ref{thm-mainpart1}.1).
 \item[(e)] If $\gamma_-=0$ and $0< x^*$, then the domain of $I$ is $[0,x^*[$ (see Fig. \ref{fig:nosing}(e)). 
We have $ I>0$ on $]0,x^*[$, and, for any small $\theta>0$,
 $\cycl(\cup_{x\in[\theta,x^*-\theta]}\Gamma_x)=1$ and the limit cycle is repelling (Theorem \ref{thm-mainpart1}.1).
\item[(f)]  If $\gamma_-=0$ and $\alpha_+=0$, then the domain of $I$ is $[0,+\infty[$ (see Fig. \ref{fig:nosing}(f)). We have $I\equiv 0$.
\item[(g)]  If $\gamma_-=0$ and $x^*<0$, then the domain of $I$ is $[0,\Pi^{-1}(x^*)[$ (see Fig. \ref{fig:nosing}(g)). We have the same sign of $I$ as in the case (d).
\end{enumerate}

\section*{Declarations}
 
\textbf{Ethical Approval} \ 
Not applicable.
 \\
\\
 \textbf{Competing interests} \  
The authors declare that they have no conflict of interest.\\
 \\
\textbf{Authors' contributions} \  All authors conceived of the presented idea, developed the theory, performed the computations and
contributed to the final manuscript.  \\ 
\\ 
\textbf{Availability of data and materials}  \
Not applicable.

\bibliographystyle{plain}
\bibliography{bibtex}

\def\cprime{$'$} \def\cprime{$'$}
\begin{thebibliography}{10}

\bibitem{berger2002a}
E.~J. Berger.
\newblock Friction modeling for dynamic system simulation.
\newblock {\em Applied Mechanics Reviews}, 55(6):535--577, 2002.

\bibitem{bonet-reves2018a}
C.~Bonet-Reves, J.~Larrosa, and T.~M-Seara.
\newblock Regularization around a generic codimension one fold-fold
  singularity.
\newblock {\em J. Differ. Equations}, 265(5):1761--1838, 2018.

\bibitem{bossolini2020a}
E.~Bossolini, M.~Brøns, and K.~U. Kristiansen.
\newblock A stiction oscillator with canards: On piecewise smooth nonuniqueness
  and its resolution by regularization using geometric singular perturbation
  theory.
\newblock {\em {SIAM Review}}, 62(4):869--897, 2020.

\bibitem{BragaMello}
D.~C. Braga and L.~F. Mello.
\newblock Limit cycles in a family of discontinuous piecewise linear
  differential systems with two zones in the plane.
\newblock {\em Nonlinear Dyn.}, 73(3):1283--1288, 2013.

\bibitem{Carmona}
V.~Carmona and F.~Fern{\'a}ndez-S{\'a}nchez.
\newblock Integral characterization for {Poincar{\'e}} half-maps in planar
  linear systems.
\newblock {\em J. Differ. Equations}, 305:319--346, 2021.

\bibitem{carmona2023a}
V.~Carmona, F.~Fernández-Sánchez, and D.~D. Novaes.
\newblock Uniform upper bound for the number of limit cycles of planar
  piecewise linear differential systems with two zones separated by a straight
  line.
\newblock {\em Applied Mathematics Letters}, 137:108501, 2023.

\bibitem{DM-entryexit}
P.~De~Maesschalck and F.~Dumortier.
\newblock Time analysis and entry-exit relation near planar turning points.
\newblock {\em J. Differential Equations}, 215(2):225--267, 2005.

\bibitem{DM}
P.~De~Maesschalck and F.~Dumortier.
\newblock Canard cycles in the presence of slow dynamics with singularities.
\newblock {\em Proc. Roy. Soc. Edinburgh Sect. A}, 138(2):265--299, 2008.

\bibitem{DDMoreLC}
P.~De~Maesschalck and F.~Dumortier.
\newblock Classical {L}i\'{e}nard equations of degree $n\ge 6$ can have
  $[\frac{n-1}{2}]+2$ limit cycles.
\newblock {\em J. Differential Equations}, 250(4):2162--2176, 2011.

\bibitem{DDR-book-SF}
P.~De~Maesschalck, F.~Dumortier, and R.~Roussarie.
\newblock {\em Canard cycles---from birth to transition}, volume~73 of {\em
  Ergebnisse der Mathematik und ihrer Grenzgebiete. 3. Folge. A Series of
  Modern Surveys in Mathematics [Results in Mathematics and Related Areas. 3rd
  Series. A Series of Modern Surveys in Mathematics]}.
\newblock Springer, Cham, 2021.

\bibitem{SDICLE1}
P.~De~Maesschalck and R.~Huzak.
\newblock Slow divergence integrals in classical {L}i\'{e}nard equations near
  centers.
\newblock {\em J. Dynam. Differential Equations}, 27(1):177--185, 2015.

\bibitem{Bernardo08}
M.~di~Bernardo, C.~J. Budd, A.~R. Champneys, and P.~Kowalczyk.
\newblock {\em Piecewise-smooth Dynamical Systems: Theory and Applications}.
\newblock Springer Verlag, 2008.

\bibitem{DPR}
F.~Dumortier, D.~Panazzolo, and R.~Roussarie.
\newblock More limit cycles than expected in {L}i\'enard equations.
\newblock {\em Proc. Amer. Math. Soc.}, 135(6):1895--1904 (electronic), 2007.

\bibitem{1996}
F.~Dumortier and R.~Roussarie.
\newblock Canard cycles and center manifolds.
\newblock {\em Mem. Amer. Math. Soc.}, 121(577):x+100, 1996.
\newblock With an appendix by Cheng Zhi Li.

\bibitem{DR2007}
F.~Dumortier and R.~Roussarie.
\newblock Canard cycles with two breaking parameters.
\newblock {\em Discrete Contin. Dyn. Syst.}, 17(4):787--806, 2007.

\bibitem{esteban2021a}
M.~Esteban, J.~Llibre, and C.~Valls.
\newblock {The 16th Hilbert problem for discontinuous piecewise isochronous
  centers of degree one or two separated by a straight line}.
\newblock {\em Chaos}, 31(4):043112, 2021.

\bibitem{filippov1988differential}
A.F. Filippov.
\newblock {\em Differential Equations with Discontinuous Righthand Sides}.
\newblock Mathematics and its Applications. Kluwer Academic Publishers, 1988.

\bibitem{Freire}
E.~Freire, E.~Ponce, F.~Rodrigo, and F.~Torres.
\newblock Bifurcation sets of continuous piecewise linear systems with two
  zones.
\newblock {\em Int. J. Bifurcation Chaos Appl. Sci. Eng.}, 8(11):2073--2097,
  1998.

\bibitem{Gasull2020}
A.~Gasull, J.~Torregrosa, and X.~Zhang.
\newblock Piecewise linear differential systems with an algebraic line of
  separation.
\newblock {\em Electron. J. Differ. Equ.}, 2020:14, 2020.
\newblock Id/No 19.

\bibitem{GST2011}
M.~Guardia, T.~M. Seara, and M.~A. Teixeira.
\newblock Generic bifurcations of low codimension of planar {F}ilippov systems.
\newblock {\em J. Differ. Equations}, 250(4):1967--2023, 2011.

\bibitem{Han2010}
M.~Han and W.~Zhang.
\newblock On {Hopf} bifurcation in non-smooth planar systems.
\newblock {\em J. Differ. Equations}, 248(9):2399--2416, 2010.

\bibitem{HuanYang}
S.~M. Huan and X.~S. Yang.
\newblock On the number of limit cycles in general planar piecewise linear
  systems.
\newblock {\em Discrete Contin. Dyn. Syst.}, 32(6):2147--2164, 2012.

\bibitem{HuzakPrey}
R.~Huzak.
\newblock Predator-prey systems with small predator's death rate.
\newblock {\em Electron. J. Qual. Theory Differ. Equ.}, pages Paper No. 86, 16,
  2018.

\bibitem{RHKKProgress}
R.~Huzak and K.~Uldall Kristiansen.
\newblock General results on sliding cycles in regularized piecewise linear
  systems, in progress.

\bibitem{RHKK}
R.~Huzak and K.~Uldall Kristiansen.
\newblock The number of limit cycles for regularized piecewise polynomial
  systems is unbounded.
\newblock {\em J. Differ. Equations}, 342:34--62, 2023.

\bibitem{RHKKGR2023}
R.~Huzak, K.~Uldall Kristiansen, and G.~Radunovi\'{c}.
\newblock Slow divergence integral in regularized piecewise smooth systems,
  submitted 2023.

\bibitem{jelbart2021c}
S.~Jelbart, K.~U. Kristiansen, and M.~Wechselberger.
\newblock Singularly perturbed boundary-equilibrium bifurcations.
\newblock {\em Nonlinearity}, 34(11):7371--7314, 2021.

\bibitem{jelbart2021b}
S.~Jelbart, K.~U. Kristiansen, and M.~Wechselberger.
\newblock Singularly perturbed boundary-focus bifurcations.
\newblock {\em J. Differ. Equations}, 296:412--492, 2021.

\bibitem{kosiuk2016a}
I.~Kosiuk and P.~Szmolyan.
\newblock Geometric analysis of the goldbeter minimal model for the embryonic
  cell cycle.
\newblock {\em Journal of Mathematical Biology}, 72(5):1337--1368, 2016.

\bibitem{kristiansen2018a}
K.~U. Kristiansen and S.~J. Hogan.
\newblock {Resolution of the piecewise smooth visible-invisible two-fold
  singularity in R3 using regularization and blowup}.
\newblock {\em Journal of Nonlinear Science}, 29(2):723--787, 2018.

\bibitem{kristiansen2020a}
K.~Uldall Kristiansen.
\newblock The regularized visible fold revisited.
\newblock {\em Journal of Nonlinear Science}, 30(6):2463--2511, 2020.

\bibitem{kristiansen2015a}
K.~Uldall Kristiansen and S.~J. Hogan.
\newblock Regularizations of two-fold bifurcations in planar piecewise smooth
  systems using blowup.
\newblock {\em {SIAM Journal on Applied Dynamical Systems}}, 14(4):1731--1786,
  2015.

\bibitem{Kuznetsov2003}
{\mbox{Yu.~A}}.~Kuznetsov, S.~Rinaldi, and A.~Gragnani.
\newblock One parameter bifurcations in planar {F}ilippov systems.
\newblock {\em Int. J. Bif. Chaos}, 13:2157--2188, 2003.

\bibitem{li2021a}
T.~Li and J.~Llibre.
\newblock {On the 16th Hilbert Problem for Discontinuous Piecewise Polynomial
  Hamiltonian Systems}.
\newblock {\em Journal of Dynamics and Differential Equations}, pages 1--16,
  2021.

\bibitem{LlibreOrd}
J.~Llibre, M.~Ord{\'o}{\~n}ez, and E.~Ponce.
\newblock On the existence and uniqueness of limit cycles in planar continuous
  piecewise linear systems without symmetry.
\newblock {\em Nonlinear Anal., Real World Appl.}, 14(5):2002--2012, 2013.

\bibitem{Llibre3LC}
J.~Llibre and E.~Ponce.
\newblock Three nested limit cycles in discontinuous piecewise linear
  differential systems with two zones.
\newblock {\em Dyn. Contin. Discrete Impuls. Syst., Ser. B, Appl. Algorithms},
  19(3):325--335, 2012.

\bibitem{llibre2013a}
J.~Llibre, M.~A. Teixeira, and J.~Torregrosa.
\newblock Lower bounds for the maximum number of limit cycles of discontinuous
  piecewise linear differential systems with a straight line of separation.
\newblock {\em International Journal of Bifurcation and Chaos}, 23(4):1350066,
  2013.

\bibitem{Sotomayor96}
J.~Sotomayor and M.~A. Teixeira.
\newblock Regularization of discontinuous vector fields.
\newblock In {\em Proceedings of the International Conference on Differential
  Equations, Lisboa}, pages 207--223, 1996.

\bibitem{lvarez2020a}
M.~J. Álvarez, B.~Coll, P.~De~Maesschalck, and R.~Prohens.
\newblock Asymptotic lower bounds on hilbert numbers using canard cycles.
\newblock {\em J. Differ. Equations}, 268(7):3370--3391, 2020.

\end{thebibliography}
\end{document}